\crefname{figure}{Figure}{Figures}
\crefname{subsection}{Subsection}{Subsections}
\crefname{enumi}{item}{items}
\DeclareMathAlphabet{\mathscr}{LS1}{stixscr}{m}{n}
\renewcommand{\d}{\operatorname{d\!}}
\newcommand{\R}{\mathbb{R}}
\newcommand{\N}{\mathbb{N}}
\newcommand{\Z}{\mathbb{Z}}
\newcommand{\E}{\mathbb{E}}
\renewcommand{\P}{\mathbb{P}}
\newcommand{\F}{\mathcal{F}}
\newcommand{\Exp}[1]{ \E \! \left[ #1 \right]}
\newcommand{\EXPP}[1]{ \E \big[ #1 \big]}
\newcommand{\Var}[1]{\operatorname{Var}\left(#1\right)}
\newcommand{\Cov}[2]{\operatorname{Cov}\left(#1,#2\right)}
\newcommand{\qand}{\qquad\text{and}}
\newcommand{\qandq}{\qquad\text{and}\qquad}
\newcommand{\andq}{\text{and}\qquad}
\newcommand{\normalcdf}{\mathfrak{N}}
\newcommand{\Borel}{\mathcal{B}}
\newcommand{\dimParam}{\mathfrak{p}}
\newcommand{\Param}{\mathfrak{P}}
\newcommand{\ParamRV}{P}
\newcommand{\WRV}{\mathcal{W}}
\newcommand{\WRVs}{W}
\newcommand{\dimWRV}{\mathbf{d}}
\newcommand{\NumMC}{\mathfrak{M}}
\newcommand{\MCvariable}{\mathfrak{m}}
\newcommand{\NumRefMC}{\mathscr{M}}
\newcommand{\BatchVariable}{{\bf m}}
\newcommand{\Batchsize}{{\bf M}}
\newcommand{\LRVANN}{\mathscr{N}}
\newcommand{\BSrate}{r}
\newcommand{\BSinitprice}{\xi}
\newcommand{\BSdividend}{\delta}
\newcommand{\boundary}{B}
\newcommand{\numTimePoints}{N}
\newcommand{\eulerapprox}{\Phi}
\newcommand{\dimBM}{d}
\newcommand{\eulerRV}{\mathfrak{W}}
\newcommand{\MLMCdims}{\mathbf{d}}
\newcommand{\MLMCRV}{\mathfrak{W}}
\newcommand{\MLMCproj}{\mathcal{P}}
\newcommand{\MLMClevel}{L}
\newcommand{\MLMCreference}{\mathcal{L}}
\newcommand{\MLPuEval}{\mathcal{P}}
\newcommand{\MLPindexset}{\mathbf{I}}
\newcommand{\MLPindex}{\mathbf{i}}
\newcommand{\MLPAlg}{V}
\newcommand{\MLPRVs}{W}
\newcommand{\MLPsingleRV}{\mathfrak{W}}
\newcommand{\MLPcount}{C}
\newcommand{\MLPlevel}{N}
\newcommand{\MLPreference}{\mathcal{N}}
\newcommand{\MLPApproxDim}{\MLPcount_\MLPlevel \dimWRV}
\newcommand{\dimSolution}{k}
\newcommand{\ApproxAlg}{\Psi}
\newcommand{\ApproxAlgdim}{\mathfrak{d}}
\newcommand{\ApproxAlgRV}{\mathfrak{W}}
\newcommand{\ApproxRef}{\Xi}
\newcommand{\ApproxRefdim}{\mathbf{d}}
\newcommand{\ApproxRefRV}{\mathbf{W}}
\newcommand{\costfct}{\mathfrak{C}}
\newcommand{\dimProblem}{d}
\newcommand{\antithetic}{a}
\newcommand{\exact}{e}
\newcommand{\new}[1]{#1}
\newtheorem{lemma}{Lemma}[section]
\newtheorem{corollary}[lemma]{Corollary}
\newtheorem{algo}[lemma]{Framework}
\begin{document}

\title{Learning the random variables in Monte Carlo \\ simulations with stochastic gradient descent: \\Machine learning for parametric \\ PDEs and financial derivative pricing}

\author{
	Sebastian Becker$^1$,
	Arnulf Jentzen$^{2, 3}$, \\
	Marvin S.\ M\"{u}ller$^{4}$, and
	Philippe von Wurstemberger$^{5,6, *}$
	\bigskip
	\\
	\small{$^1$Risklab, Department of Mathematics, ETH Zurich, }  
	\\
	\small{Switzerland, e-mail: \texttt{sebastian.becker@math.ethz.ch}}
	\smallskip
	\\
	\small{$^2$School of Data Science and Shenzhen Research Institute of Big Data,}
	\\
	\small{The Chinese University of Hong Kong, Shenzhen, China, e-mail: \texttt{ajentzen@cuhk.edu.cn}}
	\smallskip
	\\
	\small{$^3$Applied Mathematics: Institute for Analysis and Numerics,}
	\\
	\small{University of M\"unster, Germany, e-mail: \texttt{ajentzen@uni-muenster.de} }
	\smallskip
	\\
	\small{$^4$2Xideas Switzerland AG, Switzerland, e-mail: \texttt{marvin.s.mueller@gmail.com}}
	\smallskip
	\\
	\small{$^5$Risklab, Department of Mathematics, ETH Zurich,}
	\\
	\small{Switzerland, e-mail: \texttt{philippe.vonwurstemberger@math.ethz.ch}	}
	\smallskip
	\\
	\small{$^6$School of Data Science, The Chinese University of}
	\\
	\small{Hong Kong, Shenzhen, China, e-mail: \texttt{philippevw@cuhk.edu.cn} }
	\smallskip
	\\
	\small{$^*$Corresponding author: \texttt{philippe.vonwurstemberger@math.ethz.ch}}
	\\
}

\maketitle
\pagebreak

\begin{abstract}

In financial engineering, prices of financial products are computed approximately many times each trading day with (slightly) different parameters in each calculation. 
In many financial models such prices can be approximated by means of Monte Carlo (MC) simulations. 
To obtain a good approximation the MC sample size usually needs to be considerably large resulting in a long computing time to obtain a single approximation. 
A natural deep learning approach to reduce the computation time when new prices need to be calculated as quickly as possible would be to train an artificial neural network (ANN) to learn the function which maps parameters of the model and of the financial product to the price of the financial product.
However, empirically it turns out that this approach leads to approximations with unacceptably high errors, in particular when the error is measured in the $L^\infty$-norm, and it seems that ANNs are not capable to closely approximate prices of financial products in dependence on the model and product parameters in real life applications.
This is not entirely surprising given the high-dimensional nature of the problem and the fact that it has recently been proved for a large class of algorithms, including the deep learning approach outlined above, 
that such methods are in general not capable to overcome the curse of dimensionality for such approximation problems in the $L^\infty$-norm.
In this paper we introduce a new numerical approximation strategy for parametric approximation problems including the parametric financial pricing problems described above and we illustrate by means of several numerical experiments that the introduced approximation strategy achieves a very high accuracy for a variety of high-dimensional parametric approximation problems, even in the $L^\infty$-norm.
A central aspect of the approximation strategy proposed in this article is to combine MC algorithms with machine learning techniques to, roughly speaking, \textit{learn the random variables} (LRV) in MC simulations.
In other words, we employ stochastic gradient descent (SGD) optimization methods not to train parameters of standard ANNs but instead to learn random variables appearing in MC approximations.
In that sense, the proposed LRV strategy has strong links to \textit{Quasi-Monte Carlo} (QMC) methods as well as to the field of algorithm learning.
Our numerical simulations strongly indicate that the LRV strategy might indeed be capable to overcome the curse of dimensionality in the $L^\infty$-norm in several cases where the standard deep learning approach has been proven not to be able to do so.
This is not a contradiction to the established lower bounds mentioned above because this new LRV strategy is outside of the class of algorithms for which lower bounds have been established in the scientific literature.
The proposed LRV strategy is of general nature and not only restricted to the parametric financial pricing problems described above, but applicable to a large class of approximation problems.
In this article we numerically test the LRV strategy 
	in the case of the pricing of European call options in the Black-Scholes model with one underlying asset,
	in the case of the pricing of European worst-of basket put options in the Black-Scholes model with three underlying assets,
	in the case of the pricing of European average put options in the Black-Scholes model with three underlying assets and knock-in barriers,
	as well as
	in the case of stochastic Lorentz equations.
For these examples the LRV strategy produces highly convincing numerical results when compared with standard MC simulations, QMC simulations using Sobol sequences, SGD-trained shallow ANNs, and SGD-trained deep ANNs.

\end{abstract}
\pagebreak

\tableofcontents
\allowdisplaybreaks

\section{Introduction}

Many computational problems from engineering and science can be cast as certain parametric approximation problems (cf., e.g., \cite{Cohen2015,Khoo2021,Kutyniok2021,Bhattacharya2020,hs99,berner2020numerically,Vidales18,Chkifa15,Villa22,MR4153855} and references mentioned therein). 
In particular, parametric PDEs are of fundamental importance in various applications, where one is not only interested in an approximation of the solution of the approximation problem at one fixed (space-time) point but where one is interested to evaluate the approximative solution again and again as, for instance, in financial engineering where prices of financial products are computed approximately many times each trading day with (slightly) different parameters in each calculation. 
Moreover, the problems appearing in such financial applications are often high-dimensional, as the dimension usually corresponds to the number of assets/financial contracts in the considered trading portfolio.

It is a widespread issue that the majority of algorithms for such parametric approximation problems suffer from the \textit{curse of dimensionality} (cf., e.g., Bellman \cite{Bellman1957} and Novak \& Wo\'{z}niakowski \cite[Chapter 1]{Novak08}) in the sense that the computational effort of the approximation methods grows exponentially in the dimension of the approximation problem or in the required approximation accuracy, making them useless for high-dimensional problems.
In the \textit{information based complexity} literature there are fundamental lower bounds which generally reveal the impossibility to approximate the solutions of certain classes of high-dimensional approximation problems without the curse of dimensionality among general classes of approximation algorithms; see, e.g.,
Grohs \& Voigtlaender \cite{Grohs2021},
Heinrich \cite{Heinrich2006},
Heinrich \& Sindambiwe \cite{hs99}, and
Novak \& Wo\'{z}niakowski \cite{NovakWozniakowski08}.
Developing methods which produce good approximations for high-dimensional problems is thus an exceedingly hard task and, among the general classes of algorithms considered in the above named references, essentially impossible.

In this paper we present a new method to tackle high-dimensional parametric approximation problems.
Roughly speaking, our strategy is based on the idea to \textit{combine Monte Carlo} (MC) \textit{algorithms} (such as, e.g., standard MC methods, multilevel Monte Carlo (MLMC) methods, or multilevel Picard (MLP) methods) \textit{with stochastic gradient descent} (SGD) \textit{optimization methods} by viewing the employed realizations of random variables in the MC approximation as training parameters for the SGD optimization method. 
In other words, in this approach we intend to employ SGD optimization methods not to train standard artificial neural networks (ANNs) but to learn random variables appearing in MC approximations.

To make this idea more concrete, we now sketch this \textit{learning the random variables} (LRV) approximation strategy in the context of a basic example of a parametric approximation problem.
Let $\dimParam \in \N$, suppose that we intend to approximate a target function $u \colon [0,1]^\dimParam \to \R$, 
let $\phi \colon [0,1]^\dimParam \times \R \to \R $ be measurable and bounded, 
let $ ( \Omega, \F, \P ) $ be a probability space, 
let $ \WRV \colon \Omega \to \R $ be a random variable, 
and suppose that $u \colon [0,1]^\dimParam \to \R$ admits the probabilistic representation that for all 
	$p \in [0, 1]^\dimParam$
we have that
\begin{equation}
\label{intro:eq0}
\begin{split} 
  u( p ) = 
  \mathbb{E}\!\left[ 
    \phi( 
      p, \WRV
    )
  \right]
\end{split}
\end{equation}
(parametric integration problem; cf., e.g.,
Cohen \& DeVore \cite{Cohen2015} and
Heinrich \& Sindambiwe \cite{hs99}). 
We  note that we only chose the random variable $\WRV$ to be 1-dimensional for simplicity and refer to \cref{sect:MC} for the case when $\WRV$ is a possibly high-dimensional random variable.
Our first step to derive the proposed approximation algorithm is to recall standard MC approximations for the parametric expectation in \eqref{intro:eq0}. 
Let $ \NumMC \in \N $, 
let 
$ 
  \WRVs^{ m, \MCvariable } \colon \Omega \to \R 
$, $ m, \MCvariable \in \N_0 $, 
be i.i.d.\ random variables which satisfy for all $ A \in \mathcal{B}( \R ) $ that 
$
  \P( W^{ 0, 0 } \in A ) = \P( \WRV \in A ) 
$, and observe 
for all $ p \in [0,1]^\dimParam $ that
\begin{equation}
\label{intro:eq1}
  u( p ) = 
  \mathbb{E}\!\left[ 
    \phi( 
      p, \WRV
    )
  \right]
  =
  \mathbb{E}\!\left[ 
    \phi( 
      p, \WRVs^{ 0, 1 }
    )
  \right]
  \approx 
  \tfrac{ 1 }{ \NumMC }
  \big[
    {\textstyle \sum\nolimits_{ \MCvariable = 1 }^{ \NumMC }}
    \phi( 
      p, \WRVs^{ 0, \MCvariable }
    )
  \big]
  .
\end{equation}
In the next step we introduce a parametric function on $[0,1]^\dimParam$ with parameter set $\R^{ \NumMC}$ to reformulate the standard MC approximation in \eqref{intro:eq1} in such a way that the random variables 
$
	\WRVs^{ 0, m } \colon \Omega \to \R
$, $m \in \{1, 2, \ldots, \NumMC\}$, 
correspond to the parameters of the parametric function.
More precisely, let $\LRVANN \colon [0,1]^\dimParam \times \R^{ \NumMC} \to \R$ satisfy for all
	$p \in [0,1]^\dimParam$, 
	$\theta = ( \theta_1, \dots, \theta_{ \NumMC } ) \in \R^{\NumMC}$
that
$
	\LRVANN(p, \theta)
=
    \frac{ 1 }{ \NumMC }
      \big[ 
        \sum_{ \MCvariable = 1 }^{ \NumMC }
        \phi( 
          p, 
          \theta_{ \MCvariable }
        )
      \big]
$
and note that \eqref{intro:eq1} suggests for all 
	$ p \in [0,1]^\dimParam $ 
that
\begin{equation}
\label{intro:eq3}
  u( p )
  \approx 
  \tfrac{ 1 }{ \NumMC }
  \big[
    {\textstyle \sum_{ \MCvariable = 1 }^{ \NumMC }
    \phi( 
      p, \WRVs^{ 0, \MCvariable }
    )}
  \big]
 =
 \LRVANN\!\big(p,  (\WRVs^{ 0, 1 }, \WRVs^{ 0, 2 }, \ldots, \WRVs^{ 0,  \NumMC})\big)
  .
\end{equation}
Next we employ the SGD optimization method to train the right hand side of \eqref{intro:eq3} in search of “better random realizations” to approximate the target function $u$ 
than those provided by the random variables 
$
	\WRVs^{ 0, m } \colon \Omega \to \R
$, $m \in \{1, 2, \ldots, \NumMC\}$, 
in the standard MC approximation in \eqref{intro:eq3}.
The random variables 
$
	\WRVs^{ 0, m } \colon \Omega \to \R
$, $m \in \{1, 2, \ldots, \NumMC\}$, 
on the right hand side of \eqref{intro:eq3} then only supply the initial guess in the SGD training procedure. 
More specifically, 
let 
$  
  \ParamRV_{ 
    m, \BatchVariable 
  }
  \colon \Omega \to [0,1]^\dimParam
$, $ m, \BatchVariable \in \N $, 
be i.i.d.\ random variables with continuous uniform distribution, 
let $ \Batchsize \in \N $, 
for every 
	$m \in \N$
let 
$ 
  F_m 
  \colon \R^{ \NumMC } 
  \times \Omega \to \R 
$
satisfy 
for all 
$ 
  \theta = 
  ( \theta_1, \dots, \theta_{ \NumMC } ) 
  \in \R^{ \NumMC } 
$ 
that
\begin{equation}
\label{intro:eq4}
\begin{split}
  F_m( \theta ) 
  &=
  \tfrac{ 1 }{ \Batchsize }
  \big[
    {\textstyle \sum_{ \BatchVariable = 1 }^{ \Batchsize }
    \left|
      \phi( 
        \ParamRV_{ 
          m, \BatchVariable 
        }, 
        \WRVs^{ m, \BatchVariable } 
      )
      -
      \LRVANN(
        \ParamRV_{ 
          m, \BatchVariable 
        }, 
        \theta
      )
    \right|^2
    }
  \big],
\end{split}
\end{equation}
assume for all 
$p \in [0,1]^\dimParam$ that
$
	(\R \ni w \mapsto \phi(p, w) \in \R) \in C^1(\R, \R)
$,
for every 
	$m \in \N$
let 
$ 
  G_m 
  \colon \R^{ \NumMC } \times \Omega 
  \to \R^{ \NumMC } 
$
satisfy 
for all 
	$ \theta \in \R^{ \NumMC } $, $ \omega \in \Omega $
that
$
  G_m( \theta, \omega )
  =
  ( \nabla_{ \theta } F_m )( \theta, \omega )
$,
let $ (\gamma_m)_{m \in \N} \subseteq (0,\infty) $, and 
let 
$ 
  \Theta \colon \N_0 \times \Omega \to \R^{ \NumMC } 
$ 
satisfy 
for all $ m \in \N $ that 
\begin{equation}
\label{intro:eq5}
  \Theta_0 = 
  ( \WRVs^{ 0, 1 }, \WRVs^{ 0, 2 }, \dots, \WRVs^{ 0, \NumMC } )
 \qandq
  \Theta_{ m }
  =
  \Theta_{ m - 1 }
  - 
  \gamma_m 
  G_m( \Theta_{ m - 1 } ) 
  .
\end{equation}
Note that the recursion in \eqref{intro:eq5} describes nothing else but the standard SGD optimization method with the learning rate schedule
$
\N \ni m \mapsto
\gamma_m \in \R
$.
For every sufficiently large 
$m \in \N$
we then propose to employ the random function 
\begin{equation}
\label{intro:eq6}
\begin{split} 
	[0,1]^\dimParam \times \Omega \ni (p, \omega) \mapsto \LRVANN(p, \Theta_m(\omega)) \in \R
\end{split}
\end{equation}
as an approximation for the target function
$
	[0,1]^\dimParam \ni p \mapsto u(p) \in \R
$. 
Note that the set 
$
	[0,1]^\dimParam \ni p \mapsto \LRVANN(p, \theta) \in \R
$, $\theta \in \R^{\NumMC}$,
of potential approximating functions for $u \colon [0,1]^\dimParam \to \R$
does not consist of standard fully-connected feedforward ANNs but
is a very problem specific class of approximating functions determined by the MC method in \eqref{intro:eq1}.
In light of this and of the fact that the expression in \eqref{intro:eq3} resembles the definition of single layer fully-connected feedforward ANNs we refer to this class of approximating functions as \textit{MC neural networks}.
Moreover, observe that through \eqref{intro:eq3} the MC method also naturally specifies a favorable initializing law for the SGD training procedure in \eqref{intro:eq5}.

In the special case of the LRV strategy illustrated in \eqref{intro:eq1}--\eqref{intro:eq6} above, the standard MC approximation method serves as a \emph{proposal algorithm} for the SGD training procedure. 
More generally, the LRV strategy can, in principle, be used on any high-dimensional approximation problem as soon as there is a reasonable stochastic proposal algorithm for the considered approximation problem available.
The LRV strategy thereby naturally specifies the compositional
architecture of the involved approximating functions and also naturally specifies the initializing law in the SGD
training procedure for each specific approximation problem.
In particular, the LRV strategy can be used 
	with the standard MC method (see \cref{sect:MC}),
	the MC-Euler-Maruyama method (see \cref{sect:MCEUler}), or 
	the MLMC method (see \cref{sect:MLMC}) 
as the proposal algorithms to approximate solutions of stochastic differential equations (SDEs) and Kolmogorov PDEs, respectively, and 
the LRV strategy can be used with 
	the MLP method (see \cref{sect:MLP}) 
as the proposal algorithm to approximate solutions of semilinear PDEs.

\begin{table}[htb] \tiny
\resizebox{\textwidth}{!}{
\csvreader[tabular=|c|c|c|c|c|c|c|,
	separator=semicolon,
     table head=
     \hline Approximation method &\thead{Number of \\ trainable \\ parameters} & \thead{Number\\ of \\ MC/QMC \\ samples} & \thead{$L^2$-approx. \\ error}& \thead{$L^\infty$-approx. \\ error}& \thead{Training \\ time in \\ seconds} & \thead{Evaluation \\ time in \\ seconds}  \\\hline,
    late after line=\\\hline]{Table_1.tex}
{
	method = \method, 
	num-params = \nparams,
	num-samples = \nsamp,
	l1-error=\lll, 
	l-2-error=\llll, 
	l-inf-error=\linf, 
	train-time=\train, 
	eval-time=\eval
}
{ \method  & \nparams & \nsamp & \llll & \linf &\train & \eval }}
\caption{\label{table:BS1_Summary}Pricing European call options in the Black-Scholes model}
\end{table}

We now illustrate the effectiveness of the LRV strategy on a simple but famous numerical example, where the LRV strategy leads to impressively precise approximations.
In \cref{table:BS1_Summary} we present results for the approximative computation of prices of European call options in the Black-Scholes model 
	by means of the deep learning method induced by Becker et al.\ \cite{BeckJafaari21} with $140000$ Adam (see Kingma \& Ba \cite{Kingma2014}) training steps
		(rows 2--5 in \cref{table:BS1_Summary}),
	by means of the standard MC method with 32768 MC samples 
		(row 6 in \cref{table:BS1_Summary}), 
	by means of the antithetic MC method with 32768 MC samples 
			(row 7 in \cref{table:BS1_Summary}), 
	by means of the Quasi-Monte Carlo (QMC) method using Sobol sequences with 32768 QMC samples 
		(row 8 in \cref{table:BS1_Summary}), and
	by means of the antithetic QMC method using Sobol sequences with 32768 QMC samples 
		(row 9 in \cref{table:BS1_Summary}), 
	by means of the LRV strategy with the standard MC method as the proposal algorithm with $140000$ Adam training steps
		(row 10 in \cref{table:BS1_Summary}), and
	by means of the LRV strategy with the antithetic MC method as the proposal algorithm with $140000$ Adam training steps
		(row 11 in \cref{table:BS1_Summary}).
In \cref{table:BS1_Summary} the $L^2$-error and the $L^\infty$-error have been computed approximately on the region
\begin{equation}
\begin{split} 
\label{intro:eq7}
	(\BSinitprice,T,r,\sigma,K) 
\in 
	[90,110]
	\times[ \tfrac{1}{100} ,1]
	\times[- \tfrac{1}{10} , \tfrac{1}{10} ]
	\times[ \tfrac{1}{100} , \tfrac{1}{2}]
	\times[90,100]
\end{split}
\end{equation}
using 8\,192\,000 evaluations of the considered approximation method. 
In \eqref{intro:eq7} we have 
	that $\BSinitprice$ stands for the initial price, 
	that $T$ stands for the time of maturity, 
	that $r$ stands for the drift rate, 
	that $\sigma$ stands for the volatility, and
	that $K$ stands for the strike price; see, e.g., \cite[Lemma 4.4]{Becker2019published} (with c = 0 in the notation of \cite[Lemma 4.4]{Becker2019published}). 
The reference solution values to compute the errors in \cref{table:BS1_Summary} have been computed with the famous Black-Scholes formula (see, e.g., \cite[Lemma 4.4]{Becker2019published} or \eqref{BS1:eq1} in \cref{simul:BS1}). 
The evaluation time corresponds to the time required to compute 8\,192\,000 evaluations.
We note that the training and evaluation times\footnote{
	The numerical experiments have been 
	performed in {\sc TensorFlow} 2.12
	running on a system equipped with an
	\textsc{NVIDIA GeForce RTX 4090} GPU with 24 GB Graphics RAM.
}
of the LRV strategy are significantly longer when compared to the deep learning method induced by Becker et al.\ \cite{BeckJafaari21} even when the considered MC neural networks involve less arithmetic operations than the considered ANNs. 
This is likely attributed to the efficient implementation and parallelization of standard feedforward ANNs in Tensorflow and our own implementation of MC neural networks, which may be slightly less computationally efficient.
The numbers in \cref{table:BS1_Summary} are taken from \cref{table:BS_LRV,table:BS_MC,table:BS_ANNs,table:BS_QMC} in \cref{simul:BS1}. 
We refer to \cref{simul:BS1} for more details on the results in \cref{table:BS1_Summary}.

Note that \cref{table:BS1_Summary} indicates that the algorithm obtained by the LRV strategy not only produces very accurate prices in the $L^2$-norm, but even has a very high accuracy in the uniform $L^\infty$-norm.
Concretely, for this 5-dimensional approximation problem, this strongly suggests that for any choice of parameters in the region considered in \eqref{intro:eq7}, the LRV strategy offers an approximation with an error smaller than $\frac{2}{1000}$.
Based on 
	this, 
	on the other numerical results presented in this paper, as well as
	on preliminary analytic investigations (cf., e.g., Gonon et al.\ \cite[Lemma 2.16]{Gonon19Uniform})
	we conjecture that the LRV strategy can overcome the curse of dimensionality for certain classes of parametric PDE problems in the $L^\infty$-norm. 
We would like to emphasize that large classes of algorithms for such approximation problems have been shown not to be able to overcome the curse of dimensionality, see, e.g., Heinrich \cite[Theorem 2.4]{hs99}, Heinrich \& Sindambiwe \cite[Theorem 1]{Heinrich2006}, and Grohs \& Voigtlaender \cite{Grohs2021}.
However our conjecture does not contradict the general lower bounds  established in the
above mentioned references, due to the fact that the LRV strategy does not belong to the class of algorithms considered in the
above mentioned references: roughly speaking, in the LRV strategy there are two stages of computational procedures, the \textit{main computational procedure} in which the “best random variables” are learned through SGD (corresponding to the 6th column in \cref{table:BS1_Summary}) and the \textit{evaluation procedure} where the computed approximation of the target function is evaluated (corresponding to the 7th column in \cref{table:BS1_Summary}). In the LRV strategy we consider the situation where it is allowed to perform function evaluations both during the {main computational} procedure \textit{and} the {evaluation procedure} while the lower bounds in \cite{hs99,Heinrich2006,Grohs2021} consider the situation where function evaluations are only allowed during the main computational procedure \textit{but not} during the evaluation procedure. 
The LRV strategy being outside of the classes of algorithms considered in the above named references is thus not constrained by the established lower bounds and hence holds the potential to overcome the curse of dimensionality, even in the $L^\infty$-norm.
We note that in practically relevant situations, just as in the considered derivative pricing problem, it is often possible to perform function evaluations also in the evaluation procedure, thus making the LRV strategy an applicable method for practically relevant approximation problems.

We now compare the proposed LRV strategy to existing algorithms and computational methods in the scientific literature.
As the LRV strategy employs SGD-type methods to "learn" parametric functions it is related to deep learning methods, where instead of random variables, optimal weights of ANNs are "learned".
From this point of view, the LRV strategy can be seen as a machine learning approach where instead of employing generic ANNs, very problem specific parametric functions are used, which contain a lot of human insight about the problem at hand.
There is a plethora of deep learning methods for the approximation of PDEs, which have been developed recently, and seem to be very effective for the approximation of high-dimensional PDEs:
cf., e.g., 
\cite{Weinan2017,han2018solving,sirignano2018dgm,E17Ritz,Fujii19Asymptotic,dissanayake1994neural,lagaris1998artificial,jianyu2003numerical,chen2020friedrichs,ChanMikaelWarin2019,Germain2020,Han20,han2020solving,henry2017deep,Hure20,Jacquier2019a,Shaolin20,Kremsner2020,WangEtal2019,pham2019neural,raissi2018forward,DeepSplitting,BeckJafaari21,berner2020numerically,HanEtal2020,Nusken21,samaniego2020energy,Becker2018,Becker2019published,Chen2019,Goudenege2019,Lyu2020,Lu2019,Zang2020weak,Han2019deep,Lye20,Zhu19,LongoEtal2020,Nuesken2021,Martin2020}. We refer to the survey articles \cite{beck2020overviewPublished,blechschmidt2021three,Germain2021,Weinan2020a} for a more detailed overview.
For methods which are specifically designed for parametric PDEs we refer to, e.g., \cite{Vidales18,Khoo2021,Bhattacharya2020,berner2020numerically,Villa22}.
In addition, we want to highlight a connection between the LRV strategy and the deep learning method for Kolmogorov PDEs developed in Becker et al.\ \cite{BeckJafaari21} and further specialized to parametric Kolmogorov PDEs in Berner et al.\ \cite{berner2020numerically}. Very roughly speaking, 
the work \cite{berner2020numerically} is concerned with approximating a function 
$u \colon \Gamma \to \R$ given for all
	$\gamma \in \Gamma$
by
$u(\gamma) = \Exp{\varphi_\gamma(S_\gamma)}$
where $\Gamma$ is a parameter set, where $d \in \N$, where $(\Omega, \mathcal{F}, \P)$ is a probability space, where $\varphi_\gamma \colon \R^d \to \R$, $\gamma \in \Gamma$, are parametric functions, and where for every $\gamma \in \Gamma$ the random variable $S_\gamma \colon \Omega \to \R^d$ is the solution of an SDE parametrized by the parameter $\gamma$. 
In \cite{berner2020numerically} they propose to approximate $u$ by looking for a function $f \colon \Gamma \to \R$ within a certain class of ANNs which aims to minimize 
$
	\Exp{(f(\Lambda) - \varphi_\Lambda(Y_\Lambda))^2}
$
where $\Lambda \colon \Omega \to \Gamma$ is a random variable and for every $\gamma \in \Gamma$ we have that $Y_\gamma \colon \Omega \to \R^d$ is an approximation of $S_\gamma$ (such as, e.g., an Euler-Maruyama approximation).
If the class of ANNs in which an optimal function $f$ is looked for is replaced by parametric functions induced by a proposal algorithm this becomes a special case of the LRV framework presented in this paper.

Furthermore, the LRV strategy can be associated with a broad subcategory of machine learning called \textit{algorithm learning} (cf., e.g., \cite{Towell1994,Chen2020,Chen2020RNA,gregor2010learning,Borgerding2016,Chen2018,Yoon2013,mensch18a,Wilder2019}).
Roughly speaking, algorithm learning refers to the idea to employ existing algorithms with known empirical or theoretical qualities as a basis to construct or extend ANNs or more general parametric function families.
In many cases the employed algorithm relies on certain hyper-parameters 
(such as, e.g., 
the learning rates in case of SGD-type methods: cf., e.g., Chen et al.\ \cite{Chen2020})
which typically are added to the set of trainable parameters of the ANN.
It is in this point that the LRV strategy differs from existing algorithm learning methods since the LRV strategy considers the random variables and not the hyper-parameters of the proposal algorithm as learnable parameters.
It thereby has the advantage that the initialization of all trainable parameters is implicitly given through the proposal algorithm.
However, the LRV strategy could very well be combined with ideas of algorithm learning. We leave this task open for future research.

Another tranche of literature connected to the LRV strategy are QMC methods (cf., e.g., \cite{drovandi2018improving,Morokoff1995,Caflisch1998}).
Roughly speaking, the idea of QMC methods is to replace uniformely distributed random variables of the MC method by a more suitable sequence of deterministic points to obtain a higher rate of convergence.
The idea to improve the choice of random variables in MC methods is a common feature of QMC methods with the LRV strategy. A key difference between the two methods is that QMC methods construct new integration points which have good properties for a wide class of integrands while the LRV strategy aims to "learn" new integration points which are specific to each considered integrand.

Next we illustrate a link between the LRV strategy and quantization methods (cf., e.g., \cite{Pages98,pages2015introduction,pages2004optimal,pages2003optimal,pages2005functional,dereich2013constructive,frikha2012quantization,pham2005approximation,rudd2017fast,Gronbach13Quantization,Altmayer14}). 
Quantization methods are concerned with approximating a continuously distributed random variable by a random variable with a finite image.
Quantization was first introduced for signal processing and information theory (cf., e.g., \cite{Graf20Foundations,pages2003optimal}) but can also be used for the numerical approximation of expectations involving the original random variable (cf., e.g., \cite{Pages98,pages2003optimal}). 
For the latter, the original random variable in the expectation is replaced by its quantization, resulting in an expected value which can easily be computed by a finite sum of weighted function evaluations.
A good quantization
is typically found with optimization methods such as, e.g., the Newton method in low dimensions or SGD-type optimization methods in high-dimensions (cf., e.g., \cite{pages2003optimal,pages2015introduction,pages2004optimal,pham2005approximation}).
To make this more concrete we now roughly illustrate this in the context of the setting described above.
For all 
	$\theta = (\theta_1, \ldots, \theta_\NumMC) \in \R^\NumMC$, 
	$C_1, C_2, \ldots, C_\NumMC \in \Borel(\R)$ 
with 
	$\forall \, m,n \in \{1, 2, \ldots, \NumMC\}, m \neq n \colon C_m \cap C_n = \{\}$ 
	and 
	$\cup_{m = 1}^\NumMC C_m = \R$ 
we consider a quantization
	$Q_{\theta,C_1, C_2, \ldots, C_\NumMC} \colon \Omega \to \R^\NumMC$ 
of $\WRV$ given by
$
	Q_{\theta,C_1, C_2, \ldots, C_\NumMC}
=
	\sum_{\mathfrak{m} = 1}^\NumMC \theta_{\mathfrak{m}} \mathbbm{1}_{C_\mathfrak{m}}(\WRV)
$.
A good quantization is found by minimizing an error between the quantization and the original random variable such as, e.g., the squared $L^2$-error given for all 
	$\theta = (\theta_1, \ldots, \theta_\NumMC) \in \R^\NumMC$, 
	$C_1, C_2, \ldots, C_\NumMC \in \Borel(\R)$ 
by
\begin{equation}
\label{intro:eq8}
\begin{split} 
\Exp{\Vert\WRV - Q_{\theta,C_1, C_2, \ldots, C_\NumMC}\Vert^2},
\end{split}
\end{equation}
with an SGD-type optimization method.
Once appropriate 
	$\theta = (\theta_1, \ldots, \theta_\NumMC) \in \R^\NumMC$, 
	$C_1, C_2, \ldots, C_\NumMC \in \Borel(\R)$ 
	with
		$\forall \, m,n \in \{1, 2, \ldots, \NumMC\}, m \neq n \colon C_m \cap C_n = \{\}$ 
		and 
		$\cup_{m = 1}^\NumMC C_m = \R$ 
have been found,
they can be employed to approximate the expectation in \eqref{intro:eq0} for every $p \in \Param$ by
\begin{equation}
\label{intro:eq9}
\begin{split} 
\textstyle
	u(p) 
=
	\Exp{\phi(p, \WRV)}
\approx
 	\Exp{\phi(p, Q_{\theta,C_1, C_2, \ldots, C_\NumMC})}
=
	\sum_{\mathfrak{m} = 1}^\NumMC \phi(p, \theta_{\mathfrak{m}}) \P( \WRV \in C_\mathfrak{m}).
\end{split}
\end{equation}
Note that this approximation has a similar form as the proposed LRV approximation in \eqref{intro:eq6} and
in both cases the points at which $\phi$ is evaluated are found through an SGD-type optimization method.
The main difference is that the optimization problem in \eqref{intro:eq8}, which is used to determine the evaluation points in \eqref{intro:eq9}, only depends on the random variable $\WRV$ whereas the optimization problem (see \eqref{intro:eq4}) to determine the evaluation points in \eqref{intro:eq6} in the LRV strategy depends on the random variable $\WRV$ and the function $\phi$.

The concept of optimizing random MC samples, a central aspect of the LRV strategy, also appears in the Bayesian statistics literature in the context of inducing points in Bayesian learning (cf., e.g., \cite{snelson2005sparse,ranganath2014black,dellaportas2019gradient,Betancourt2017}).
Such inducing points are employed to find sparse representation for large data sets.
One way to find good inducing points is to first sample them randomly and then optimize them for example with gradient based methods (cf., e.g., Snelson \& Ghahramani \cite{snelson2005sparse}), which bears some resemblance to the LRV strategy.

The reminder of this article is structured as follows.
In \cref{sect:MC,sect:MCEUler,sect:MLMC,sect:MLP} we present the LRV strategy for increasingly complex approximation problems and proposal algorithms. 
As proposal algorithms we consider 
	the standard MC method in \cref{sect:MC},
	the MC-Euler method in \cref{sect:MCEUler},
	the MLMC method in \cref{sect:MLMC}, and
	the MLP method in \cref{sect:MLP}.
In \cref{sect:general} we present the most general case of a generic proposal algorithm, which includes all previous sections as special cases.
The results of numerical experiments for the LRV approximation strategy are presented in \cref{sect:numerics}.
Specifically, 
	we consider 1-dimensional Black-Scholes equations for European call options (resulting in a 5-dimensional parametric approximation problem) in \cref{simul:BS1},
	we consider 3-dimensional Black-Scholes equations for worst-of basket put options (resulting in a 15-dimensional parametric approximation problem)  in \cref{simul:BSworstPut},
	we consider 3-dimensional Black-Scholes equations for average basket put options with knock-in barriers (resulting in a 16-dimensional parametric approximation problem) in \cref{simul:BSaverageBarrier},
	and
	we consider stochastic Lorentz equations (resulting in a 10-dimensional parametric approximation problem) in \cref{simul:Lorentz}.

\section{Learning the random variables (LRV) strategy in the case of Monte Carlo approximations}
\label{sect:MC}

In this section we employ the LRV strategy for the approximation of parametric expectations (see \eqref{MC:eq1} in  \cref{MC:target}).
This includes as a special case the approximative pricing of European options in the Black-Scholes model, as illustrated in \cref{MC:example}.
The LRV strategy 
based on standard MC averages as proposal algorithms 
for the general parametric approximation problem of \cref{MC:target}
is elaborated in \cref{MC:proposal,MC:replacing,MC:Loss,MC:SGD}.
The resulting method is summarized in a single framework in \cref{MC:description}.

\subsection{Parametric expectations involving vector valued random variables}
\label{MC:target}

Let $ \dimParam,\dimWRV  \in \N $, 
let $ \Param \subseteq \R^{ \dimParam } $ be measurable, 
let $ u \colon \Param \to \R $ be a function, 
let $ ( \Omega, \F, \P ) $ be a probability space, 
let $ \WRV \colon \Omega \to \R^{ \dimWRV } $ be a random variable, 
let $ \phi \colon \Param \times \R^{ \dimWRV } \to \R $ be measurable, 
assume for all $ p \in \Param $ that
$
  \R^{ \dimWRV } \ni w \mapsto \phi( p, w ) \in \R 
$
is continuously differentiable, 
and assume for all $ p \in \Param $ that 
$
  \E\big[
    | \phi( p, \WRV ) |
  \big]
  < \infty
$
and
\begin{equation}
\label{MC:eq1}
  u( p ) = 
  \mathbb{E}\!\left[ 
    \phi( 
      p, \WRV
    )
  \right]
  .
\end{equation}
The goal of this section is to derive an algorithm to approximately compute the function
$
	u \colon \Param \to \R
$
given through the parametric expectation in \eqref{MC:eq1}.

\subsection{Approximative pricing of European options in the Black-Scholes model}
\label{MC:example}

\begin{lemma}\label{cor:black_scholes}
Let
$ \BSinitprice, T, \sigma \in ( 0, \infty ) $,
$ \BSrate \in \R $,
let $ \normalcdf \colon \R \to \R $ satisfy for all $ x \in \R $ that
$
\normalcdf( x ) = \int_{ - \infty }^x \frac{ 1 }{ \sqrt{ 2 \pi } } \, \exp({ - \frac{ y^2 }{ 2 }  }) \d y
$,
let $ ( \Omega, \mathcal{F}, ( \mathbb{F}_t )_{ t \in [0,T] }, \P ) $ be a filtered probability space
which satisfies the usual conditions,
let $ W \colon [0,T] \times \Omega \to \R $
be a standard $( \mathbb{F}_t )_{ t \in [0,T] }$-Brownian motion
with continuous sample paths,
and
let $ X \colon [0,T] \times \Omega \to \R $
be an $( \mathbb{F}_t )_{ t \in [0,T] }$-adapted stochastic process with continuous sample paths
which satisfies that
for all $ t \in [ 0, T ] $
it holds $ \P $-a.s.\ that
\begin{equation}
X_t
= \BSinitprice +
\BSrate \int_{0}^{t} X_s \, \d s +
\sigma \int_{0}^{t} X_s \, \d W_s
.
\end{equation}
Then 
\begin{enumerate}[label=(\roman *)]
\item \label{cor:black_scholes:item1}
it holds for all $ t \in [ 0, T ] $ that
$
\P\big(X_t
=
\exp\bigl(
\bigl[ \BSrate - \tfrac{ \sigma^2 }{2} \bigr] t
+
\sigma \, W_t \bigr)
\BSinitprice
\big)
=
1
$
and
\item \label{cor:black_scholes:item2}
it holds
for all $ K \in \R $ that
\begin{equation}
\begin{split}
& \E \bigl[
\exp({ -\BSrate T })
\max \{ X_T - K, 0 \}
\bigr]
\\ & =
\begin{cases}
\BSinitprice
\,
\normalcdf \Bigl(
\tfrac{ ( \BSrate + \frac{ \sigma^2 }{2} ) T + \ln( {\BSinitprice}/{K} ) }{ \sigma \sqrt{T} }
\Bigr)
-
K \exp({ - r \, T})
\,
\normalcdf \Bigl(
\tfrac{ ( \BSrate - \frac{ \sigma^2 }{2} ) T + \ln( {\BSinitprice}/{K} ) }{ \sigma \sqrt{T} }
\Bigr)
&
\colon
K > 0
\\[1ex]
 \BSinitprice
- K \exp({ - \BSrate \, T})
&
\colon
K \leq 0.
\end{cases}
\end{split}
\end{equation}

\end{enumerate}

\end{lemma}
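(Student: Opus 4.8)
The plan is to establish part \ref{cor:black_scholes:item1} by an It\^o calculus verification together with a pathwise uniqueness argument, and then to derive the pricing formula in part \ref{cor:black_scholes:item2} by reducing the expectation to elementary one-dimensional Gaussian integrals. For part \ref{cor:black_scholes:item1}, I would introduce the candidate process $Y_t = \BSinitprice\exp([\BSrate - \frac{\sigma^2}{2}]t + \sigma W_t)$ and apply It\^o's formula to the map $(t,w)\mapsto \BSinitprice\exp([\BSrate-\frac{\sigma^2}{2}]t + \sigma w)$ along $W$. The second-order term contributes exactly $+\frac{\sigma^2}{2}Y_t\,\d t$, which cancels the $-\frac{\sigma^2}{2}Y_t\,\d t$ coming from the explicit time dependence, leaving $\d Y_t = \BSrate Y_t\,\d t + \sigma Y_t\,\d W_t$. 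Hence $Y$ solves the same linear SDE as $X$. Since the coefficient maps $x\mapsto \BSrate x$ and $x\mapsto \sigma x$ are globally Lipschitz, pathwise uniqueness holds, so $\P(X_t = Y_t) = 1$ for every $t\in[0,T]$, which is precisely the asserted representation.

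For part \ref{cor:black_scholes:item2}, I would first substitute the closed form from part \ref{cor:black_scholes:item1} and write $W_T = \sqrt{T}\,Z$ for a standard normal $Z$, so that $X_T = \BSinitprice\exp([\BSrate - \frac{\sigma^2}{2}]T + \sigma\sqrt{T}\,Z)$. In the case $K > 0$ I would split $\Exp{\max\{X_T - K,0\}} = \Exp{X_T\mathbbm{1}_{\{X_T > K\}}} - K\,\P(X_T > K)$. The event $\{X_T > K\}$ rewrites as $\{Z > -d_2\}$ with $d_2 = \frac{(\BSrate - \sigma^2/2)T + \ln(\BSinitprice/K)}{\sigma\sqrt{T}}$, giving $\P(X_T > K) = \normalcdf(d_2)$ by symmetry of the Gaussian density. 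For the first term the key computation is the Gaussian identity $\Exp{e^{aZ}\mathbbm{1}_{\{Z > c\}}} = e^{a^2/2}\normalcdf(a - c)$, obtained by completing the square in the exponent; applying it with $a = \sigma\sqrt{T}$ and $c = -d_2$ produces the factor $e^{\sigma^2 T/2}\normalcdf(d_1)$, where $d_1 = d_2 + \sigma\sqrt{T} = \frac{(\BSrate + \sigma^2/2)T + \ln(\BSinitprice/K)}{\sigma\sqrt{T}}$.

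After multiplying by the prefactor $\BSinitprice\exp([\BSrate - \sigma^2/2]T)$ stemming from $X_T$ and by the discount factor $e^{-\BSrate T}$, all exponential terms collapse to unity, yielding $\BSinitprice\normalcdf(d_1) - K e^{-\BSrate T}\normalcdf(d_2)$, which is the stated formula. In the case $K \le 0$ one uses that $X_T > 0 \ge K$ almost surely, so the payoff is simply $X_T - K$; the computation then reduces to evaluating $\Exp{X_T} = \BSinitprice e^{\BSrate T}$ via the Gaussian moment generating function, and multiplying by $e^{-\BSrate T}$ gives $\BSinitprice - Ke^{-\BSrate T}$.

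The routine bookkeeping is unproblematic; the only points demanding care are the algebraic simplification showing $d_1 = d_2 + \sigma\sqrt{T}$, the verification that the three exponential prefactors cancel exactly, and the correct matching of the two regimes $K > 0$ and $K \le 0$. I do not anticipate any genuine obstacle, since part \ref{cor:black_scholes:item1} is a textbook It\^o verification and part \ref{cor:black_scholes:item2} is the classical derivation of the Black--Scholes call price.
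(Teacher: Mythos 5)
Your proof is correct, but it takes a genuinely different route from the paper's: the paper proves this lemma almost entirely by citation, obtaining the exponential representation in item (i) from \cite[item (i) in Proposition 4.3]{Becker2019published} and the pricing formula in item (ii) by combining that representation (rewritten so that the exponent reads $(\BSrate - \tfrac{\sigma^2}{2})T + \ln(\BSinitprice) + \sigma W_T$, matching the cited lemma's hypotheses) with the Gaussian computation of \cite[Lemma 4.4]{Becker2019published}. You instead rederive both cited ingredients from first principles: the It\^o-formula verification that $Y_t = \BSinitprice \exp([\BSrate - \tfrac{\sigma^2}{2}]t + \sigma W_t)$ solves the same linear SDE, together with pathwise uniqueness (valid since $x \mapsto \BSrate x$ and $x \mapsto \sigma x$ are globally Lipschitz and $X$, $Y$ are continuous adapted solutions driven by the same Brownian motion with the same initial value), yields item (i); and the decomposition $\Exp{\max\{X_T - K, 0\}} = \Exp{X_T \mathbbm{1}_{\{X_T > K\}}} - K \, \P(X_T > K)$ together with the completing-the-square identity $\E[e^{aZ}\mathbbm{1}_{\{Z > c\}}] = e^{a^2/2}\,\normalcdf(a - c)$ yields item (ii). Your bookkeeping checks out: $d_1 = d_2 + \sigma\sqrt{T}$, the prefactors cancel via $e^{-\BSrate T} e^{(\BSrate - \sigma^2/2)T} e^{\sigma^2 T/2} = 1$, and the regime $K \le 0$ is correctly handled through $X_T > 0$ almost surely and $\E[X_T] = \BSinitprice e^{\BSrate T}$. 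What your approach buys is self-containedness and transparency about where each hypothesis enters; what the paper's buys is brevity, deferring exactly these classical computations to the earlier reference. The one point you should state explicitly rather than leave implicit is the integrability of the payoff justifying the splitting of the expectation when $K > 0$; this follows from the finiteness of the lognormal first moment, which your moment-generating-function computation already supplies.
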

\begin{proof}[Proof of \cref{cor:black_scholes}]
Observe that, e.g.,  \cite[item (i) in Proposition~4.3]{Becker2019published} implies that
for all $ t \in [ 0, T ] $
it holds $ \P $-a.s.\ that
\begin{equation}
\label{cor:black_scholes:eq1}
X_t
=
\exp\bigl(
\bigl[ \BSrate - \tfrac{ \sigma^2 }{2} \bigr] t
+
\sigma \, W_t \bigr)
\BSinitprice
=
\exp\big({
 ( \BSrate - \tfrac{ \sigma^2 }{2} ) t
+
\ln( \BSinitprice )
+
\sigma W_t}\big)
.
\end{equation}
This establishes item~\ref{cor:black_scholes:item1}.
Moreover, note that, e.g., \cite[Lemma~4.4]{Becker2019published} and \eqref{cor:black_scholes:eq1}  show
for all $ K \in \R $ that
\begin{equation}
\begin{split}
& \E \bigl[
\exp({ -\BSrate T })
\max \{ X_T - K, 0 \}
\bigr]
=
\exp({ -\BSrate T }) \,
\E \Bigl[
\max \Bigl\{ \exp\big({
( \BSrate - \tfrac{ \sigma^2 }{2} ) T
+
\ln( \BSinitprice )
+
\sigma W_T }\big) - K, 0 \Bigr\}
\Bigr]
\\ & =
\begin{cases}
 \BSinitprice
\,
\normalcdf \Bigl(
\tfrac{ ( \BSrate + \frac{ \sigma^2 }{2} ) T + \ln( {\BSinitprice}/{K} ) }{ \sigma \sqrt{T} }
\Bigr)
-
K \exp({ - \BSrate \, T})
\,
\normalcdf \Bigl(
\tfrac{ ( \BSrate - \frac{ \sigma^2 }{2} ) T + \ln( {\BSinitprice}/{K} ) }{ \sigma \sqrt{T} }
\Bigr)
&
\colon
K > 0
\\[1ex]
\BSinitprice
- K \exp({ - \BSrate \, T})
&
\colon
K \leq 0
\end{cases}
.
\end{split}
\end{equation}
The proof of \cref{cor:black_scholes} is thus complete.
\end{proof}

In the case 
where 
  $ \dimParam = 5 $,
  $ \dimWRV = 1 $, 
   and
  $
    \Param = [90, 110] \times [0.01,1] \times [-0.1,0.1] \times [0.01,0.5] \times  [90, 110]
  $, 
where 
for all 
$
  p = (\BSinitprice, T, \BSrate, \sigma, K ) \in \Param
$,
$
  w \in \R
$
it holds
that
$
  \phi( p, w ) = 
  \exp({-\BSrate T})
  \max\!\big\{ \!
    \exp\!\big(
      [ \BSrate - \frac{ \sigma^2 }{ 2 } ] T + T^{ 1 / 2 } \sigma w  
    \big) \BSinitprice
    - K
    , 0
  \big\}
  $,
  where $ \WRV $ is a standard normal random variable, 
  and
  where $ \normalcdf \colon \R \to \R $ satisfies for all $ x \in \R $ that
$
\normalcdf( x ) = \int_{ - \infty }^x \frac{ 1 }{ \sqrt{ 2 \pi } } \, \exp({ - \frac{ y^2 }{ 2 } }) \, \d y
$
observe that 
\begin{enumerate}[label=(\roman *)]
\item 
it holds for all $ p = (\BSinitprice, T, \BSrate, \sigma, K )  \in \Param $
that
\begin{equation}
  \begin{split}
    u(p) &= \E \bigl[
     \exp({-\BSrate T})
      \max\!\big\{ \!
        \exp\!\big(
          [ \BSrate - \tfrac{ \sigma^2 }{ 2 } ] T + T^{ 1 / 2 } \sigma \WRV  
        \big) \BSinitprice
        - K
        , 0
      \big\}
\bigr]
\\
  &=
  \BSinitprice
\,
\normalcdf \Bigl(
\tfrac{ ( \BSrate  + \frac{ \sigma^2 }{2} ) T + \ln( {\BSinitprice}/{K} ) }{ \sigma \sqrt{T} }
\Bigr)
-
K \exp({ - \BSrate \, T})
\,
\normalcdf \Bigl(
\tfrac{ ( \BSrate - \frac{ \sigma^2 }{2} ) T + \ln( {\BSinitprice}/{K} ) }{ \sigma \sqrt{T} }
\Bigr)
  \end{split}
\end{equation}
(cf.\ \eqref{MC:eq1} and \cref{cor:black_scholes})
and
\item 
it holds for all 
	$p = (\BSinitprice, T, \BSrate, \sigma, K ) \in \Param$ 
that
$u(p) \in [0,\infty)$ is
the price of an European call option 
in the Black-Scholes model 
with 
initial underlying price $ \BSinitprice $, 
time of maturity $ T $, 
drift rate $ \BSrate $, 
volatility $ \sigma $, 
and strike price $ K $.
\end{enumerate}

\subsection{Monte Carlo approximations}
\label{MC:proposal}

Let $ \NumMC \in \N $
and let 
$ 
  \WRVs^{ m, \MCvariable } \colon \Omega \to \R^{ \dimWRV } 
$, 
$ m, \MCvariable \in \N_0 $, 
be i.i.d.\ random variables 
which satisfy for all $ A \in \mathcal{B}( \R^{ \dimWRV } ) $ that 
$
  \P( W^{ 0, 0 } \in A ) = \P( \WRV \in A ) 
$.
Observe that \eqref{MC:eq1} suggests that 
for all $ p \in \Param $ it holds that
\begin{equation}
\label{mc:eq1}
  u( p ) = 
  \mathbb{E}\!\left[ 
    \phi( 
      p, \WRV
    )
  \right]
  =
  \mathbb{E}\!\left[ 
    \phi( 
      p, \WRVs^{ 0, 1 }
    )
  \right]
  \approx 
  \frac{ 1 }{ \NumMC }
  \left[
    \sum_{ \MCvariable = 1 }^{ \NumMC }
    \phi( 
      p, \WRVs^{ 0, \MCvariable }
    )
  \right]
  .
\end{equation}

\subsection{Replacing the random variables in Monte Carlo approximations}
\label{MC:replacing}

Let $\LRVANN \colon \Param \times \R^{ \NumMC \dimWRV} \to \R$ satisfy for all
	$p \in \Param$, 
	$\theta = ( \theta_1, \dots, \theta_{ \NumMC \dimWRV } ) \in \R^{\NumMC \dimWRV}$
that
\begin{equation}
\label{replacing:eq1}
\begin{split} 
	\LRVANN(p, \theta)
=
    \frac{ 1 }{ \NumMC }
      \left[ 
        \sum_{ \MCvariable = 1 }^{ \NumMC }
        \phi\big( 
          p, 
          ( 
            \theta_{ ( \MCvariable - 1 ) \dimWRV + k }
          )_{k \in \{1, 2, \ldots, \dimWRV \}}
        \big)
      \right].
\end{split}
\end{equation}
Note that \eqref{mc:eq1} and \eqref{replacing:eq1} suggest that 
for all $ p \in \Param $ it holds that
\begin{equation}
  u( p )
  \approx 
  \frac{ 1 }{ \NumMC }
  \left[
    \sum_{ \MCvariable = 1 }^{ \NumMC }
    \phi( 
      p, \WRVs^{ 0, \MCvariable }
    )
  \right]
 =
 \LRVANN\!\big(p,  (\WRVs^{ 0, 1 }, \WRVs^{ 0, 2 }, \ldots, \WRVs^{ 0,  \NumMC})\big)
  .
\end{equation}

\subsection{Random loss functions for fixed random variables in Monte Carlo approximations} 
\label{MC:Loss}

Let $ \Batchsize \in \N $, 
let 
$  
  \ParamRV_{ 
    m, \BatchVariable 
  }
  \colon \Omega \to \Param
$, $ m, \BatchVariable \in \N $, 
be i.i.d.\ random variables, 
for every 
	$m \in \N$
let 
$ 
  F_m 
  \colon \R^{ \NumMC \dimWRV } 
  \times \Omega \to \R 
$
satisfy 
for all 
$ 
  \theta = 
  ( \theta_1, \dots, \theta_{ \NumMC \dimWRV } ) 
  \in \R^{ \NumMC \dimWRV } 
$ 
that
\begin{equation}
\begin{split}
  F_m( \theta ) 
  &=
  \frac{ 1 }{ \Batchsize }
  \left[
    \sum_{ \BatchVariable = 1 }^{ \Batchsize }
    \left|
      \phi( 
        \ParamRV_{ 
          m, \BatchVariable 
        }, 
        \WRVs^{ m, \BatchVariable } 
      )
      -
      \LRVANN(
        \ParamRV_{ 
          m, \BatchVariable 
        }, 
        \theta
      )
    \right|^2
  \right] 
\\&= 
  \frac{ 1 }{ \Batchsize }
  \left[
    \sum_{ \BatchVariable = 1 }^{ \Batchsize }
    \left|
      \phi( 
        \ParamRV_{ 
          m, \BatchVariable 
        }, 
        \WRVs^{ m, \BatchVariable } 
      )
      -
      \frac{ 1 }{ \NumMC }
      \left[ 
        \sum_{ \MCvariable = 1 }^{ \NumMC }
        \phi\big( 
          \ParamRV_{ 
            m, \BatchVariable 
          }, 
          ( 
            \theta_{ ( \MCvariable - 1 ) \dimWRV + k }
          )_{k \in \{1, 2, \ldots, \dimWRV \}}
        \big)
      \right]
    \right|^2
  \right] ,
\end{split}
\end{equation}
and for every 
	$m \in \N$
let 
$ 
  G_m 
  \colon \R^{ \NumMC \dimWRV } \times \Omega 
  \to \R^{ \NumMC \dimWRV } 
$
satisfy 
for all 
	$ \theta \in \R^{ \NumMC \dimWRV } $, 
	$ \omega \in \Omega $
that
\begin{equation}
  G_m( \theta, \omega )
  =
  ( \nabla_{ \theta } F_m )( \theta, \omega ).
\end{equation}

\subsection{Learning the random variables with stochastic gradient descent}
\label{MC:SGD}

Let $ (\gamma_m)_{m \in \N} \subseteq (0,\infty) $ and 
let 
$ 
  \Theta \colon \N_0 \times \Omega \to \R^{ \NumMC \dimWRV } 
$ 
satisfy 
for all $ m \in \N $ that 
$ 
  \Theta_0 = 
  ( \WRVs^{ 0, 1 }, \WRVs^{ 0, 2 }, \dots, \WRVs^{ 0, \NumMC } )
$
and
\begin{equation}
  \Theta_{ m }
  =
  \Theta_{ m - 1 }
  - 
  \gamma_m 
  G_m( \Theta_{ m - 1 } ) 
  .
\end{equation}
\new{
For every sufficiently large 
$m \in \N$
we propose to employ the random function 
$
	\Param \times \Omega \ni (p, \omega) \mapsto \LRVANN(p, \Theta_m(\omega)) \in \R
$
as an approximation for the target function
$
	\Param \ni p \mapsto u(p) \in \R
$ in \eqref{MC:eq1}. 
}

\subsection{Description of the proposed approximation algorithm}
\label{MC:description}

\begin{algo}
Let $ \dimParam,\dimWRV, \NumMC, \Batchsize\in \N $, 
$ (\gamma_m)_{m \in \N} \subseteq (0,\infty) $,
let $ \Param \subseteq \R^{ \dimParam } $ be measurable, 
let $ \phi \in C( \Param \times \R^{ \dimWRV } , \R ) $, 
let $ ( \Omega, \mathcal{F}, \P ) $ be a probability space, 
let 
$  
  \ParamRV_{ 
    m, \BatchVariable 
  }
  \colon \Omega \to \Param
$, $ m, \BatchVariable \in \N $, 
be i.i.d.\ random variables, 
let 
$ 
  \WRVs^{ m, \MCvariable } \colon \Omega \to \R^{ \dimWRV } 
$, 
$ m, \MCvariable \in \N_0 $, 
be i.i.d.\ random variables, 
assume that 
$
  (
    \ParamRV_{ 
      m, \BatchVariable 
    }
  )_{
    (m, \BatchVariable) \in \N^2
  }
$
and 
$
  (
    \WRVs^{ m, \MCvariable }
  )_{
    (m, \MCvariable) \in \N^2
  }
$
are independent, 
assume for all $ p \in \Param $ that
$
  \R^{ \dimWRV } \ni w \mapsto \phi( p, w ) \in \R 
$
is continuously differentiable, 
for every 
	$m \in \N$
let 
$ 
  F_m 
  \colon \R^{ \NumMC \dimWRV } 
  \times \Omega \to \R 
$
satisfy 
for all 
$ 
  \theta = 
  ( \theta_1, \dots, \theta_{ \NumMC \dimWRV } ) 
  \in \R^{ \NumMC \dimWRV } 
$ 
that
\begin{equation}
  F_m( \theta ) 
  =
  \frac{ 1 }{ \Batchsize }
  \left[
    \sum_{ \BatchVariable = 1 }^{ \Batchsize }
    \left|
      \phi( 
        \ParamRV_{ 
          m, \BatchVariable 
        }, 
        \WRVs^{ m, \BatchVariable } 
      )
      -
      \frac{ 1 }{ \NumMC }
      \left[ 
        \sum_{ \MCvariable = 1 }^{ \NumMC }
        \phi\big( 
          \ParamRV_{ 
            m, \BatchVariable 
          }, 
          ( 
            \theta_{ ( \MCvariable - 1 ) \dimWRV + k }
          )_{k \in \{1, 2, \ldots, \dimWRV \}}
        \big)
      \right]
    \right|^2
  \right] ,
\end{equation}
for every 
	$m \in \N$
let 
$ 
  G_m 
  \colon \R^{ \NumMC \dimWRV } \times \Omega 
  \to \R^{ \NumMC \dimWRV } 
$
satisfy 
for all 
	$ \theta \in \R^{ \NumMC \dimWRV } $, $ \omega \in \Omega $ 
that
$
  G_m( \theta, \omega )
  =
  ( \nabla_{ \theta } F_m )( \theta, \omega )
$, 
and
let 
$ 
  \Theta \colon \N_0 \times \Omega \to \R^{ \NumMC \dimWRV } 
$ 
satisfy 
for all $ m \in \N $ that 
$ 
  \Theta_0 = 
  ( \WRVs^{ 0, 1 }, \WRVs^{ 0, 2 }, \dots, \WRVs^{ 0, \NumMC } )
$
and
\begin{equation}
  \Theta_{ m }
  =
  \Theta_{ m - 1 }
  - 
  \gamma_m 
  G_m( \Theta_{ m - 1 } ) 
  .
\end{equation}
\end{algo}

\section{LRV strategy in the case of Monte Carlo-Euler-Maruyama approximations}
\label{sect:MCEUler}

In the previous section we derived the LRV strategy for the approximation of parametric expectations involving finite-dimensional random variables.
In this section we consider parametric expectations involving standard Brownian motions as random variables (see \eqref{MCEUler:eq0} in \cref{MCEUler:target}).
To derive an LRV algorithm for these approximation problems we first discretize the Brownian motions in \cref{MCEUler:discretization} to obtain parametric expectations involving only finite-dimensional random variables. 
In \cref{MCEUler:proposal,MCEUler:replacing,MCEUler:Loss,MCEUler:SGD} the LRV strategy is then subsequently applied to these discretized parametric expectations as in \cref{sect:MC} above (cf.\ \cref{MC:proposal,MC:replacing,MC:Loss,MC:SGD}).
In \cref{MCEUler:example} we illustrate a special instance
of the parametric expectations in \eqref{MCEUler:eq0} and of associated discretizations in the situation where the parametric expectations in \eqref{MCEUler:eq0} involve solutions of SDEs.
Finally, in \cref{MCEUler:description} we summarize the algorithm derived in this section in one single framework.

\subsection{Parametric expectations involving vector valued stochastic processes}
\label{MCEUler:target}

Let $ \dimParam,\dimBM \in \N $, $T \in (0, \infty)$,
let $ \Param \subseteq \R^{ \dimParam } $ be measurable, 
let $ u \colon \Param \to \R $ be a function, 
let $ ( \Omega, \F, \P ) $ be a probability space, 
let $ \WRV \colon [0,T] \times \Omega \to \R^{ \dimBM } $ be a 
standard Brownian motion with continuous sample paths, 
let $ \phi \colon \Param \times C( [0,T], \R^{ \dimBM } ) \to \R $ be measurable, 
and
assume for all $ p \in \Param $ that 
$
  \E\big[
    | \phi( p, \WRV ) |
  \big]
  < \infty
$
and
\begin{equation}
\label{MCEUler:eq0}
  u( p ) = 
  \mathbb{E}\!\left[ 
    \phi( 
      p, \WRV
    )
  \right].
\end{equation}
The goal of this section is to derive an  algorithm to approximately compute the function
$
	u \colon \Param \to \R
$
given through the parametric expectation in \eqref{MCEUler:eq0}.

\subsection{Temporal discretizations of the Brownian motion}
\label{MCEUler:discretization}

Let $\numTimePoints \in \N$,
let $ \eulerRV \colon \Omega \to \R^{N\dimBM}$ satisfy 
$
	\eulerRV
=
	\sqrt{{\numTimePoints}/{T}}\,
	(
		\WRV_{T / \numTimePoints} - \WRV_0, 
		\WRV_{ 2 T / \numTimePoints } - \WRV_{ T / \numTimePoints }, 
		\dots, 
		\WRV_T -\WRV_{ (\numTimePoints-1) T / \numTimePoints } 
	)
$,
let
$
\eulerapprox \colon
\Param \times \R^{\numTimePoints \dimBM} \to
\R
$
satisfy for all $ p \in \Param $ that
\(
  \R^{\numTimePoints \dimBM} \ni w \mapsto \eulerapprox(p, w) \in \R
\)
is continuously differentiable,
and assume for all $p \in \Param$ that
$\E[|\eulerapprox(p, \eulerRV)|]<\infty$.
We think of
$\eulerapprox \colon
\Param \times \R^{\numTimePoints \dimBM} \to
\R$
as a suitable approximation of
$\phi \colon  \Param \times C( [0,T], \R^{ \dimBM } ) \to \R$
in the sense that for all $ p \in \Param $ it holds that
\begin{equation}
  \label{eq:u_SP_approx} 
    \phi( 
      p, \WRV
    )
  \approx 
    \eulerapprox\big( 
    p, \sqrt{{\numTimePoints}/{T}}\,
    (\WRV_{T / \numTimePoints} - \WRV_0, \WRV_{ 2 T / \numTimePoints } - \WRV_{ T / \numTimePoints }, \dots, \WRV_T -\WRV_{ (\numTimePoints-1) T / \numTimePoints } )
    \big)
  =
 \eulerapprox(p, \eulerRV).
\end{equation}

\subsection{Euler-Maruyama approximations for parametric stochastic differential equations}
\label{MCEUler:example}

In the case 
where $ \dimParam = 1 + \dimBM $,
where $\boundary \in (0,\infty)$,
where
$
\Param = [0,T] \times [-\boundary,\boundary]^\dimBM
$,
where
$\mu \colon \R^\dimBM \to \R^\dimBM$ is globally Lipschitz continuous,
where
$
X = (X^{\xi, w}_t)_{(t, \xi, w) \in [0,T] \times \R^{\dimBM} \times C([0,T],\R^\dimBM)} \colon [0,T] \times \R^{\dimBM} \times C([0,T],\R^\dimBM) \to \R^{\dimBM}
$
satisfies for all
	$t \in [0,T]$,
	$\xi \in \R^{\dimBM}$,
	$w=(w_s)_{s\in [0,T]} \in C([0,T], \R^\dimBM)$
that 
\begin{equation}
\label{MCeuler:eq1}
  X_t^{\xi, w} = \xi + \int_0^t \mu(X_s^{\xi, w}) \d s + w_t,
\end{equation}
where $g\colon [0,T] \times \R^{\dimBM} \to \R$
satisfies for all $t \in [0,T]$ that 
$\R^{\dimBM} \ni x \mapsto g(t, x) \in \R$
is continuously differentiable,
where it holds for all
$p=(t,\xi)\in \Param$,
$
w \in C([0,T], \R^\dimBM)
$
that
$\phi(p,w) = g(t, X^{\xi,w}_t)$,
where
$
\mathcal{X}^{\xi, \theta} = (\mathcal{X}^{\xi, \theta}_t)_{t \in [0,T]}\colon [0,T] \to \R^\dimBM
$,
	$\theta \in \R^{N \dimBM}$,
	$\xi \in \R^{\dimBM}$,
satisfy for all 
	$\theta \in \R^{N \dimBM}$,
	$\xi \in \R^{\dimBM}$,
	$n\in \{1, 2, \dots \numTimePoints\}$,
	$t\in [\tfrac{(n-1)T}{\numTimePoints},\tfrac{nT}{\numTimePoints}]$
that $\mathcal{X}_0^{\xi,\theta} = \xi$
and
\begin{equation}
\label{MCeuler:eq2}
  \mathcal{X}_{t}^{\xi, \theta} =  \mathcal{X}_{(n-1)T/\numTimePoints}^{\xi, \theta} + \big(\tfrac{t\numTimePoints}{T} - n + 1\big)\bigl(\tfrac{T}{\numTimePoints} \mu(\mathcal{X}_{(n-1)T/N}^{\xi, \theta}) + \tfrac{\sqrt{T}}{\sqrt{\numTimePoints}} (\theta_{(n-1)\dimBM + k})_{k \in \{1, 2, \ldots, \dimBM\}}\bigr),
\end{equation}
and where it holds for all
$
p \in \Param
$,
$
\theta \in \R^{\numTimePoints\dimBM}
$
that
$\eulerapprox( p, \theta ) = g(t, \mathcal{X}^{\xi, \theta}_t)$
observe that 
\begin{enumerate}[label=(\roman *)]
\item 
it holds 
for all 
	$ p = (t,\xi)\in \Param $
that
\begin{equation}
\begin{split} 
	u(p) = u(t, \xi) = \E[g(t,X^{\xi,\WRV}_t) ]
\end{split}
\end{equation}
is the expectation of the test function $\R^d \ni x\mapsto g(t,x) \in \R$ evaluated at time $t$ of the solution process $(X^{\xi,\WRV}_s)_{s\in [0,T]}$ of the additive noise driven SDE in \eqref{MCeuler:eq1}
and
\item 
it holds for all
	$p = (t,\xi)\in \Param $
that
$\eulerapprox(p, \eulerRV)$ is an approximation
\begin{equation}
\begin{split} 
	\eulerapprox(p, \eulerRV) = g(t, \mathcal{X}^{\xi, \eulerRV}_t)
\approx 
	g(t, X^{\xi,\WRV}_t) = \phi(p,\WRV)
\end{split}
\end{equation}
of $\phi(p,\WRV)$ based on linearly interpolated Euler-Maruyama approximations $(\mathcal{X}_{s}^{\xi, \eulerRV})_{s \in [0,T]}$ with $\numTimePoints$ timesteps of the solution $(X_{s}^{\xi, \WRV})_{s \in [0,T]}$ of the SDE in \eqref{MCeuler:eq1}.

\end{enumerate}

\subsection{Monte Carlo approximations}
\label{MCEUler:proposal}

Let $ \NumMC \in \N $
and 
let 
$ 
  \WRVs^{ m, \MCvariable } \colon \Omega \to \R^{ \numTimePoints \dimBM } 
$, 
$ m, \MCvariable \in \N_0 $, 
be i.i.d.\ standard normal random vectors.
Observe that \eqref{MCEUler:eq0} and \eqref{eq:u_SP_approx} suggest that 
for all $ p \in \Param $ it holds that
\begin{equation}
\label{mc:eq2}
    u( p ) = 
  \mathbb{E}\!\left[ 
    \phi( 
      p, \WRV
    )
  \right]
  \approx
  \E[
  	\eulerapprox(p, \eulerRV)
  ]
  =
  \E[
    \eulerapprox(p, \WRVs^{0, 1})
  ]
  \approx
  \frac{ 1 }{ \NumMC }
  \left[
    \sum_{ \MCvariable = 1 }^{ \NumMC }
    \eulerapprox\big( 
    p, 
    \WRVs^{0,\MCvariable}
    \big)
  \right]  
  .
\end{equation}

\subsection{Replacing the random variables in Monte Carlo approximations}
\label{MCEUler:replacing}

Let $\LRVANN \colon \Param \times\R^{ \NumMC \numTimePoints \dimBM }  \to \R$ satisfy for all
	$p \in \Param$, 
	$\theta = (\theta_1, \ldots, \theta_{\NumMC \numTimePoints\dimBM}) \in \R^{ \NumMC \numTimePoints \dimBM } $
that
\begin{equation}
\label{replacing:eq2}
\begin{split} 
	\LRVANN(p, \theta)
=
   \frac{ 1 }{ \NumMC }
      \left[ 
        \sum_{ \MCvariable = 1 }^{ \NumMC }
        \eulerapprox\Big( 
          p, 
          (\theta_{( \MCvariable - 1 ) \numTimePoints \dimBM + k})_{k \in\{1, 2, \dots, \numTimePoints \dimBM\}}          
        \Big)
      \right].
\end{split}
\end{equation}
Note that \eqref{mc:eq2} and \eqref{replacing:eq2} suggest that 
for all $ p \in \Param $ it holds that
\begin{equation}
  u( p )
  \approx
    \frac{ 1 }{ \NumMC }
    \left[
      \sum_{ \MCvariable = 1 }^{ \NumMC }
      \eulerapprox\big( 
      p, 
      \WRVs^{0, \MCvariable}
      \big)
    \right]  
 =
  \LRVANN\!\big(
    p,  
      (
        \WRVs^{ 0,  1  },
        \WRVs^{ 0,  2  },
        \ldots,
        \WRVs^{ 0,  \NumMC  }
       )
  \big)
  .
\end{equation}

\subsection{Random loss functions for fixed random variables in Monte Carlo approximations} 
\label{MCEUler:Loss}
Let $ \Batchsize \in \N $,  
let 
$  
  \ParamRV_{ 
    m, \BatchVariable 
  }
  \colon \Omega \to \Param
$, $ m, \BatchVariable \in \N $, 
be i.i.d.\ random variables, 
for every 
	$m \in \N$
let
$
F_m
  \colon \allowbreak\R^{ \NumMC \numTimePoints \dimBM } 
  \times\Omega\to\R
$
satisfy 
for all 
$
\theta = (\theta_1, \ldots, \theta_{\NumMC \numTimePoints\dimBM}) \in \R^{\NumMC\numTimePoints\dimBM}
$
that
\begin{equation}
\begin{split}
  F_m( \theta ) 
  &=
  \frac{ 1 }{ \Batchsize }
  \left[
    \sum_{ \BatchVariable = 1 }^{ \Batchsize }
    \left|
      \eulerapprox( 
        \ParamRV_{ 
          m, \BatchVariable 
        }, 
        \WRVs^{ m, \BatchVariable } 
      )
      -
      \LRVANN(
        \ParamRV_{ 
          m, \BatchVariable 
        }, 
        \theta
      )
    \right|^2
  \right] 
\\&= 
  \frac{ 1 }{ \Batchsize }
  \left[
    \sum_{ \BatchVariable = 1 }^{ \Batchsize }
    \left|
      \eulerapprox( 
        \ParamRV_{ 
          m, \BatchVariable 
        }, 
        \WRVs^{ m, \BatchVariable } 
      )
      -
      \frac{ 1 }{ \NumMC }
      \left[ 
        \sum_{ \MCvariable = 1 }^{ \NumMC }
        \eulerapprox\big( 
          \ParamRV_{ 
            m, \BatchVariable 
          }, 
          ( 
            \theta_{ ( \MCvariable - 1 ) \numTimePoints\dimBM + k }
          )_{k \in \{1, 2, \ldots, \numTimePoints\dimBM \}}
        \big)
      \right]
    \right|^2
  \right] ,
\end{split}
\end{equation}
and for every 
	$m \in \N$
let 
$ 
  G_m 
  \colon \R^{\NumMC \numTimePoints \dimBM } \times \Omega 
  \to \R^{\NumMC \numTimePoints \dimBM } 
$
satisfy 
for all 
	$ \theta \in \R^{\NumMC\numTimePoints\dimBM} $, $ \omega \in \Omega $ 
that
\begin{equation}
  G_m( \theta, \omega )
  =
  ( \nabla_{ \theta } F_m )( \theta, \omega ).
\end{equation}

\subsection{Learning the random variables with stochastic gradient descent}
\label{MCEUler:SGD}

Let
$
  (\gamma_m)_{m \in \N} \subseteq (0,\infty) 
$
and let 
$ 
  \Theta \colon \N_0 \times \Omega \to \R^{\NumMC\numTimePoints\dimBM} 
$ 
satisfy for all
$ m \in \N $ that 
$
\Theta_0
=
(\WRVs^{0,1}, \WRVs^{0,2}, \ldots, \WRVs^{0,\NumMC})
$
and
\begin{equation}
  \Theta_{ m }
  =
  \Theta_{ m - 1 }
  - 
  \gamma_m 
  G_m( \Theta_{ m - 1 } ) 
  .
\end{equation}
\new{
For every sufficiently large 
$m \in \N$
we propose to employ the random function 
$
	\Param \times \Omega \ni (p, \omega) \mapsto \LRVANN(p, \Theta_m(\omega)) \in \R
$
as an approximation for the target function
$
	\Param \ni p \mapsto u(p) \in \R
$ in \eqref{MCEUler:eq0}.}

\subsection{Description of the proposed approximation algorithm}
\label{MCEUler:description}

\begin{algo}
Let $\dimParam, \dimBM, \numTimePoints, \NumMC, \Batchsize \in \N $, 
$ (\gamma_m)_{m \in \N} \subseteq (0,\infty) $,
let $ \Param \subseteq \R^{ \dimParam } $ be measurable, 
let $ ( \Omega, \mathcal{F}, \P ) $ be a probability space, 
let
$  
  \ParamRV_{ 
    m, \BatchVariable 
  }
  \colon \Omega \to \Param
  $, $ m, \BatchVariable \in \N $,
be i.i.d.\ random variables, 
let 
$ 
  \WRVs^{ m, \MCvariable } \colon [0, T] \times \Omega \to \R^{ \dimBM } 
$, 
$ m, \MCvariable \in \N_0 $, 
be i.i.d.\ standard Brownian motions,
assume that 
$
  (
    \ParamRV_{ 
      m, \BatchVariable 
    }
  )_{
    (m, \BatchVariable) \in \N^2
  }
$
and 
$
  (
    \WRVs^{ m, \MCvariable }
  )_{
    (m, \MCvariable) \in \N^2
  }
$
are independent, 
let
$
\eulerapprox \colon
\Param \times \R^{\numTimePoints \dimBM} \to
\R
$ satisfy for all $ p \in \Param $ that
\(
  \R^{\numTimePoints \dimBM} \ni w \mapsto \eulerapprox(p, w) \in \R
\)
is continuously differentiable,
for every 
	$m \in \N$
let 
$ 
  F_m 
  \colon \R^{\NumMC\numTimePoints\dimBM} 
  \times \Omega \to \R 
$
satisfy 
for all 
$
\theta = (\theta_1, \ldots, \theta_{\NumMC\numTimePoints\dimBM})
  \in \R^{\NumMC\numTimePoints\dimBM} 
$
that
\begin{multline}
  F_m( \theta ) 
  =
  \frac{ 1 }{ \Batchsize }
  \left[
    \sum_{ \BatchVariable = 1 }^{ \Batchsize }
    \left|
    \eulerapprox\Big( 
        \ParamRV_{ 
          m, \BatchVariable 
        }, 
        \sqrt{{\numTimePoints}/{T}}\,
        (\WRVs^{ m,  \BatchVariable  }_{T / \numTimePoints} - \WRVs^{ m,  \BatchVariable  }_{ 0 }, \WRVs^{ m,  \BatchVariable  }_{ 2 T / \numTimePoints } - \WRVs^{ m,  \BatchVariable  }_{ T / \numTimePoints }, \dots, \WRVs^{ m,  \BatchVariable  }_T -\WRVs^{ m,  \BatchVariable  }_{ (\numTimePoints-1) T / \numTimePoints } )    
       \Big)
       \vphantom{
       - \frac{ 1 }{ \NumMC }
      \left[ 
        \sum_{ \MCvariable = 1 }^{ \NumMC }
        \eulerapprox\Big( 
          \ParamRV_{ 
            m, \BatchVariable 
          }, 
          (\theta_{( \MCvariable - 1 ) \dimBM + k, n })_{(k, n) \in\{1, 2, \dots, \dimBM\} \times \{1, 2, \dots, \numTimePoints\}}          
        \Big)
      \right]
       }       \right.
       \vphantom{\left.
       - \frac{ 1 }{ \NumMC }
      \left[ 
        \sum_{ \MCvariable = 1 }^{ \NumMC }
        \eulerapprox\Big( 
          \ParamRV_{ 
            m, \BatchVariable 
          }, 
          (\theta_{( \MCvariable - 1 ) \dimBM + k, n })_{(k, n) \in\{1, 2, \dots, \dimBM\} \times \{1, 2, \dots, \numTimePoints\}}
        \Big)
      \right]
      \right|^2}
       \right.\\
      \left.\left.
       - \frac{ 1 }{ \NumMC }
      \left[ 
        \sum_{ \MCvariable = 1 }^{ \NumMC }
        \eulerapprox\Big( 
          \ParamRV_{ 
            m, \BatchVariable 
          }, 
          ( 
            \theta_{ ( \MCvariable - 1 ) \numTimePoints\dimBM + k }
          )_{k \in \{1, 2, \ldots, \numTimePoints\dimBM \}}         
        \Big)
      \right]
    \right|^2
  \right] ,
\end{multline}
for every 
	$m \in \N$
let 
$ 
  G_m 
  \colon \R^{\NumMC \numTimePoints\dimBM } \times \Omega 
  \to \R^{\NumMC \numTimePoints\dimBM } 
$
satisfy 
for all 
	$ \theta \in \R^{\NumMC\numTimePoints\dimBM } $, 
	$ \omega \in \Omega $ 
that
$
  G_m( \theta, \omega )
  =
  ( \nabla_{ \theta } F_m )( \theta, \omega )
  $,
and
let 
$ 
  \Theta = (\Theta^{(1)}, \ldots, \Theta^{(\NumMC\numTimePoints\dimBM)}) \colon \N_0 \times \Omega \to \R^{\NumMC\numTimePoints\dimBM } 
$ 
satisfy 
for all
	$ m \in \N $,
	$k \in \{1, 2, \ldots, \NumMC\}$,	
	$n \in \{1, 2, \ldots, \numTimePoints\}$
that 
$
	(
		\Theta_0^{([(k-1)\numTimePoints + n-1]\dimBM + 1)},
		\Theta_0^{([(k-1)\numTimePoints + n-1]\dimBM + 2)}, \allowbreak
		\ldots,
		\Theta_0^{([(k-1)\numTimePoints + n-1]\dimBM + \dimBM)}
	)
=
  \sqrt{{\numTimePoints}/{T}}
   (\WRVs^{0, k}_{nT/ \numTimePoints} - \WRVs^{0, k}_{(n-1)T/ \numTimePoints})
$
and
$
  \Theta_{ m }
  =
  \Theta_{ m - 1 }
  - 
  \gamma_m 
  G_m( \Theta_{ m - 1 } ) 
  .
$
\end{algo}

\section{LRV strategy in the case of multilevel Monte Carlo approximations}
\label{sect:MLMC}

In this section we consider the problem of approximating a parametric expectation involving a general measure space-valued random variable for which a sequence of finite-dimensional approximations is available (see \cref{MLMC:target,MLMC:discretization}).
We illustrate such a situation in \cref{MLMC:example} in which the original random variable is a Brownian motion driving an SDE and the finite-dimensional approximations consist of Euler discretizations of the SDE with decreasing step sizes. 
The employed proposal algorithm for the LRV strategy in this section is a form of MLMC method 
(cf.\ Heinrich \cite{Heinrich98} and Giles \cite{Giles2008} and cf., e.g., \cite{Heinrich2001,Creutzig09,hs99,Giles15})
as described in \cref{MLMC:proposal}.
Based on this proposal algorithm, the LRV strategy is applied to the considered approximation problem in \cref{MLMC:replacing,MLMC:Loss,MLMC:SGD} and, thereafter, summarized in one single framework in \cref{MLMC:description}.

\subsection{Parametric expectations involving measure space valued random variables}
\label{MLMC:target}

Let $ \dimParam \in \N $,
let $ \Param \subseteq \R^{ \dimParam } $ be measurable, 
let $(V, \mathcal{V})$ be a measurable space,
let $ \phi \colon \Param \times V \to \R $ be continuous, 
let $ u \colon \Param \to \R $ be a function, 
let $ ( \Omega, \F, \P ) $ be a probability space, 
let $ \WRV \colon \Omega \to V $ be a random variable,
and
assume for all $ p \in \Param $ that 
$
  \E\big[
    | \phi( p, \WRV ) |
  \big]
  < \infty
$
and
\begin{equation}
\label{MLMC:eq1}
  u( p ) = 
  \mathbb{E}\!\left[ 
    \phi( 
      p, \WRV
    )
  \right].
\end{equation}
The goal of this section is to derive an  algorithm to approximately compute the function
$
	u \colon \Param \to \R
$
given through the parametric expectation in \eqref{MLMC:eq1}.

\subsection{Finite-dimensional approximations}
\label{MLMC:discretization}

Let $\MLMCdims = (\MLMCdims_l)_{l \in \N_0} \colon \N_0 \to \N$ be non-decreasing,
let $\MLMCproj_l \colon \R^{\MLMCdims_l} \to \R^{\MLMCdims_{l-1}}$, $l \in \N$, be continuously differentiable,
let
$\MLMCRV_l \colon \Omega \to \R^{\MLMCdims_l}$, $l \in \N_0$, be random variables which satisfy for all $l \in \N$ that 
$\MLMCproj_{l}(\MLMCRV_{l}) = \MLMCRV_{l-1}$,
let 
$
\eulerapprox_l \colon
\Param \times \R^{\MLMCdims_l} \to
\R
$,
$l \in \N_0$,
satisfy for all 
	$l \in \N_0$,
	$ p \in \Param $
that
\(
  \R^{\MLMCdims_l} \ni w \mapsto \eulerapprox_l(p, w) \in \R
\)
is continuously differentiable,
and assume for all $ p \in \Param $, $l \in \N_0$ that
$\E[|\eulerapprox_l(p, \MLMCRV_l)|]<\infty$.
We think of
$\MLMCRV_l$, $l \in \N_0$, 
and 
$\eulerapprox_l$,
$l \in \N_0$,
as suitable approximations of $\WRV$ and 
$\phi$
in the sense that for all $ p \in \Param $ and all sufficiently large $l \in \N_0$ it holds that
\begin{equation}
  \label{MLMC:eq2} 
    \phi( 
      p, \WRV
    )
  \approx 
    \eulerapprox_l( 
    p,
    \MLMCRV_l
    ).
\end{equation}

\subsection{Multilevel Monte Carlo approximations}
\label{MLMC:proposal}

Let $\MLMClevel \in \N_0$, 
$ \NumMC_0, \NumMC_1, \ldots, \NumMC_\MLMClevel \in \N $
satisfy $\NumMC_0 \geq \NumMC_1 \geq \ldots \geq \NumMC_\MLMClevel$, 
let 
$ 
  W_l^{ m, \MCvariable } \colon \Omega \to \R^{ \MLMCdims_l } 
$, 
$ l, m, \MCvariable \in \N_0 $, 
be independent random variables,
and
assume 
for all
$ l,m, \MCvariable \in \N_0 $, $A \in \mathcal{B}( \R^{ \MLMCdims_l } ) $
that
$
  \P( W_l^{ m, \MCvariable} \in A ) = \P( \MLMCRV_l \in A ) 
$.
Observe that \eqref{MLMC:eq1} and \eqref{MLMC:eq2} suggest that 
for all $ p \in \Param $ it holds that
\begin{equation}
\label{MLMC:eq3}
\begin{split} 
    u( p ) = 
  \mathbb{E}\!\left[ 
    \phi( 
      p, \WRV
    )
  \right]
  &\approx
  \E[
  	\eulerapprox_\MLMClevel(p, \MLMCRV_\MLMClevel)
  ]
  =
  \E[
  	\eulerapprox_0(p, \MLMCRV_0)
  ]  
  +
  \sum_{l = 1}^\MLMClevel
  	\E[
  	  	\eulerapprox_l(p, \MLMCRV_l)
  	  	-
  	  	\eulerapprox_{l-1}(p, \MLMCproj_l(\MLMCRV_l))
  	  ] 
\\
  &\approx
  \frac{1}{\NumMC_0}
  \left[
	  \sum_{m = 1}^{\NumMC_0}
		 	\eulerapprox_0(p, W_0^{0,m})
  \right]
  +
  \sum_{l = 1}^\MLMClevel
    \frac{1}{\NumMC_l}
    \left[
      \sum_{m = 1}^{\NumMC_l}
      \eulerapprox_l(p, W_l^{0,m})
      -
      \eulerapprox_{l-1}(p, \MLMCproj_l(W_l^{0,m}))
    \right].
\end{split}
\end{equation}

\subsection{Multilevel Monte Carlo approximations for parametric stochastic differential equations}
\label{MLMC:example}

In the case 
where $\dimBM \in \N$,
$T, \boundary \in (0,\infty)$,
where 
	$ \dimParam = 1 + \dimBM $, 
	$
	\Param = [0,T] \times [-\boundary,\boundary]^\dimBM
	$, and
	$(V, \mathcal{V}) = \linebreak (C([0,T],\R^\dimBM), \mathcal{B}(C([0,T],\R^\dimBM)))$,
where
$\mu \colon \R^\dimBM \to \R^\dimBM$ is globally Lipschitz continuous,
where
$
X = \linebreak (X^{\xi, w}_t)_{(t, \xi, w) \in [0,T] \times \R^\dimBM \times C([0,T],\R^\dimBM)} \colon [0,T] \times \R^\dimBM \times C([0,T],\R^\dimBM) \to \R^{\dimBM}
$
satisfies for all
$t \in [0,T]$,
$\xi \in \R^\dimBM$,
$
w=(w_s)_{s\in [0,T]} \in C([0,T], \R^\dimBM)
$
that 
\begin{equation}
  X_t^{\xi, w} = \xi + \int_0^t \mu(X_s^{\xi, w}) \d s + w_t,
\end{equation}
where $g\colon [0,T] \times \R^{\dimBM} \to \R$
satisfies for all $t \in [0,T]$ that 
$\R^{\dimBM} \ni x \mapsto g(t, x) \in \R$
is continuously differentiable,
where it holds for all
	$p=(t,\xi)\in \Param$,
	$w \in C([0,T], \R^\dimBM)$
that
	$\phi(p,w) = g(t, X^{\xi,w}_t)$,
where 
$\WRV \colon \Omega \to C([0,T],\R^\dimBM) $ is a standard Brownian motion, 
where for all $l \in \N_0$ it holds that $\MLMCdims_l = 2^l \dimBM$,
where for all 
	$l \in \N_0$
it holds that
\begin{equation}
\label{MLMC:eq5}
\begin{split} 
	\MLMCRV_{l}
=
	\sqrt{{2^l}/{T}}\,
    (\WRV_{T / 2^l} - \WRV_0, \WRV_{ 2 T / 2^l } - \WRV_{ T / 2^l }, \dots, \WRV_T -\WRV_{ (2^l-1) T / 2^l } ),
\end{split}
\end{equation}
where
$
\mathcal{X}^{\xi, \theta, N} = (\mathcal{X}^{\xi, \theta, N}_t)_{t \in [0,T]}\colon [0,T] \to \R^\dimBM
$,
	$N \in \N$,
	$\theta \in \R^{N \dimBM}$,
	$\xi \in \R^{\dimBM}$,
satisfy for all 
	$N \in \N$,
	$\theta \in \R^{N \dimBM}$,
	$\xi \in \R^{\dimBM}$,
	$n\in \{1, 2, \dots N\}$,
	$t\in [\tfrac{(n-1)T}{N},\tfrac{nT}{N}]$
that $\mathcal{X}_0^{\xi,\theta,N} = \xi$
and
\begin{equation}
\label{MLMC:eq6}
  \mathcal{X}_{t}^{\xi, \theta, N} =  \mathcal{X}_{(n-1)T/N}^{\xi, \theta, N} + \big(\tfrac{tN}{T} - n + 1\big)\big(\tfrac{T}{N} \mu(\mathcal{X}_{(n-1)T/N}^{\xi, \theta, N}) + \tfrac{\sqrt{T}}{\sqrt{N}} (\theta_{(n-1)\dimBM + k})_{k \in \{1, 2, \ldots, \dimBM\}}\big),
\end{equation}
where for all
$l \in \N_0$,
$
p = (t, \xi) \in \Param
$,
$
\theta \in \R^{2^l \dimBM}
$
it holds that
$\eulerapprox_l( p, \theta ) = g(t, \mathcal{X}^{\xi, \theta, 2^l}_t)$,
and where for all
	$l \in \{0, 1, \ldots, L\}$
it holds that
$\NumMC_l = 2^{L-l}$
observe that
\begin{enumerate}[label=(\roman *)]
\item
it holds for all 
	$l \in \N$, 
	$\theta_1, \theta_2\, \ldots, \theta_{2^l}, \vartheta_1, \vartheta_2\, \ldots, \vartheta_{2^{l-1}} \in \R^{\dimBM}$ 
with 
$
	\forall \, n \in \{1, 2, \ldots, 2^{l-1}\}
	\colon
	\vartheta_n = \frac{1}{\sqrt{2}}(\theta_{2n} + \theta_{2n-1})
$
that
\begin{equation}
\begin{split} 
	\MLMCproj_l(\theta_1, \theta_2\, \ldots, \theta_{2^l}) = (\vartheta_1, \vartheta_2\, \ldots, \vartheta_{2^{l-1}}),
\end{split}
\end{equation}

\item 
it holds for all 
	$ p = (t,\xi)\in \Param $
that
\begin{equation}
\begin{split} 
	u(p) = u(t, \xi) = \E[g(t,X^{\xi,\WRV}_t) ]
\end{split}
\end{equation}
is the expectation of the test function $\R^d \ni x\mapsto g(t,x) \in \R$ evaluated at time $t$ of the solution process $(X^{\xi,\WRV}_s)_{s\in [0,T]}$ of the additive noise driven SDE in \eqref{MLMC:eq5},

\item 
it holds for all 
	$l \in \N_0$, 
	$p = (t,\xi)\in \Param $
that
$\eulerapprox_l(p, \MLMCRV_l)$ is an approximation
\begin{equation}
\begin{split} 
	\eulerapprox_l(p, \MLMCRV_l) = g(t, \mathcal{X}^{\xi, \MLMCRV_l, 2^l}_t)
\approx 
	g(t, X^{\xi,\WRV}_t) = \phi(p,\WRV)
\end{split}
\end{equation}
of $\phi(p,\WRV)$ based on linearly interpolated Euler-Maruyama approximations $(\mathcal{X}_{s}^{\xi, \MLMCRV_l, 2^l})_{s \in [0,T]}$ with $2^l$ timesteps of the solution $(X^{\xi,\WRV}_s)_{s\in [0,T]}$ of the SDE in \eqref{MLMC:eq5},
and

\item 
it holds for all 
	$p = (t,\xi)\in \Param $
that
\begin{equation}
\begin{split} 
  &\frac{1}{\NumMC_0}
  \left[
	  \sum_{m = 1}^{\NumMC_0}
		 	\eulerapprox_0(p, W_0^{0,m})
  \right]
  +
  \sum_{l = 1}^\MLMClevel
    \frac{1}{\NumMC_l}
    \left[
      \sum_{m = 1}^{\NumMC_l}
      \eulerapprox_l(p, W_l^{0,m})
      -
      \eulerapprox_{l-1}(p, \MLMCproj_l(W_l^{0,m}))
    \right]\\
&=
  \frac{1}{2^\MLMClevel}
  \left[
	  \sum_{m = 1}^{2^\MLMClevel}
		 	g(t, \mathcal{X}^{\xi, W_0^{0,m}, 1}_t)
  \right]
  +
  \sum_{l = 1}^\MLMClevel
    \frac{1}{2^{L-l}}
    \left[
      \sum_{m = 1}^{2^{L-l}}
      g(t, \mathcal{X}^{\xi, W_l^{0,m}, 2^l}_t)
      -
      g(t, \mathcal{X}^{\xi, \MLMCproj_l(W_l^{0,m}), 2^{l-1}}_t)
    \right]   
\end{split}
\end{equation}
is a MLMC approximation of $u(t,\xi)$ as proposed in Giles \cite{Giles2008}.
\end{enumerate}

\subsection{Replacing the random variables in multilevel Monte Carlo approximations}
\label{MLMC:replacing}

Let $\LRVANN \colon \Param \times \R^{  \NumMC_0 \MLMCdims_0 + \NumMC_1 \MLMCdims_1 + \ldots + \NumMC_\MLMClevel \MLMCdims_\MLMClevel}  \to \R$ satisfy for all
	$p \in \Param$, 
	$\theta = (\theta_1, \ldots, \theta_{\NumMC_0 \MLMCdims_0 + \NumMC_1 \MLMCdims_1 + \ldots + \NumMC_\MLMClevel \MLMCdims_\MLMClevel})
	\in \R^{ \NumMC_0 \MLMCdims_0 + \NumMC_1 \MLMCdims_1 + \ldots + \NumMC_\MLMClevel \MLMCdims_\MLMClevel } $
that
\begin{multline}  
\label{MLMC:eq4}
	\LRVANN(p, \theta)
=
     \frac{1}{\NumMC_0}
     \left[
   	  \sum_{m = 1}^{\NumMC_0}
   		 	\eulerapprox_0\big(p, (\theta_{(m-1)\MLMCdims_0 + k})_{k \in \{1, 2, \ldots, \MLMCdims_0\}}\big)
     \right]\\
      +
     \sum_{l = 1}^\MLMClevel
       \frac{1}{\NumMC_l}
       \Bigg[
         \sum_{m = 1}^{\NumMC_l}
         \eulerapprox_l\big(p, (\theta_{
         	\NumMC_0 \MLMCdims_0 + \NumMC_1 \MLMCdims_1 + \ldots + \NumMC_{l-1} \MLMCdims_{l-1} + (m-1)\MLMCdims_l + k
         })_{k \in \{1, 2, \ldots, \MLMCdims_l\}}\big)\\
          -
         \eulerapprox_{l-1}\big(p, \MLMCproj_l(
         	(\theta_{
         	         	\NumMC_0 \MLMCdims_0 + \NumMC_1 \MLMCdims_1 + \ldots + \NumMC_{l-1} \MLMCdims_{l-1} + (m-1)\MLMCdims_l + k
         	         })_{k \in \{1, 2, \ldots, \MLMCdims_l\}}
         )\big)
       \Bigg].
\end{multline}
Note that \eqref{MLMC:eq3} and \eqref{MLMC:eq4} suggest that 
for all $ p \in \Param $ it holds that
\begin{equation}
\begin{split}
  u( p )
  &\approx
  \frac{1}{\NumMC_0}
  \left[
	  \sum_{m = 1}^{\NumMC_0}
		 	\eulerapprox_0(p, W_0^{0,m})
  \right]
  +
  \sum_{l = 1}^\MLMClevel
    \frac{1}{\NumMC_l}
    \left[
      \sum_{m = 1}^{\NumMC_l}
      \eulerapprox_l(p, W_l^{0,m})
      -
      \eulerapprox_{l-1}(p, \MLMCproj_l(W_l^{0,m}))
    \right] \\
  &=
    \LRVANN\big(p, (
      W_0^{0, 1}, W_0^{0, 2}, \ldots, W_0^{0, \NumMC_1},
      W_1^{0, 1}, W_1^{0, 2}, \ldots, W_1^{0, \NumMC_2},
      \ldots,
      W_\MLMClevel^{0, 1}, W_\MLMClevel^{0, 2}, \ldots, W_\MLMClevel^{0, \NumMC_\MLMClevel}
    )\big).
\end{split}
\end{equation}

\subsection{Random loss functions for fixed random variables in multilevel Monte Carlo approximations}
\label{MLMC:Loss}
Let $\Batchsize \in \N$, $\MLMCreference \in \N_0$, 
let 
$  
  \ParamRV_{ 
    m, \BatchVariable 
  }
  \colon \Omega \to \Param
$, $ m, \BatchVariable \in \N $, 
be i.i.d.\ random variables, 
assume that 
$
  (
    \ParamRV_{ 
      m, \BatchVariable 
    }
  )_{
    (m, \BatchVariable) \in \N^2
  }
$
and
$
  (
  W_l^{ m, \MCvariable }
  )_{
    (l, m, \MCvariable) \in (\N_0)^3
  }
$
are independent,
for every 
	$m \in \N$
let
$
  F_m 
  \colon \allowbreak\R^{  \NumMC_0 \MLMCdims_0 + \NumMC_1 \MLMCdims_1 + \ldots + \NumMC_\MLMClevel \MLMCdims_\MLMClevel } 
  \times\Omega\to\R
$
satisfy 
for all 
$\theta = (\theta_1, \ldots, \theta_{\NumMC_0 \MLMCdims_0 + \NumMC_1 \MLMCdims_1 + \ldots + \NumMC_\MLMClevel \MLMCdims_\MLMClevel})
\in \R^{ \NumMC_0 \MLMCdims_0 + \NumMC_1 \MLMCdims_1 + \ldots + \NumMC_\MLMClevel \MLMCdims_\MLMClevel } $
that
\begin{equation}
\begin{split} 
	F_m( \theta ) 
&=
	\frac{ 1 }{ \Batchsize }
	\Bigg[
		\sum_{ \BatchVariable = 1 }^{ \Batchsize }
		\left|
			\eulerapprox_\MLMCreference\big( 
				\ParamRV_{ 
					m, \BatchVariable 
				},
				W_\MLMCreference^{m,  \BatchVariable}
			\big)
			-
			\LRVANN(
				\ParamRV_{ 
					m, \BatchVariable 
				},
				\theta
			)
		\right|^2
	\Bigg],
\end{split}
\end{equation}
and for every 
	$m \in \N$
let 
$ 
  G_m 
  \colon \R^{\NumMC_0 \MLMCdims_0 + \NumMC_1 \MLMCdims_1 + \ldots + \NumMC_\MLMClevel \MLMCdims_\MLMClevel} \times \Omega 
  \to \R^{\NumMC_0 \MLMCdims_0 + \NumMC_1 \MLMCdims_1 + \ldots + \NumMC_\MLMClevel \MLMCdims_\MLMClevel } 
$
satisfy 
for all 
  $ \theta \in \R^{\NumMC_0 \MLMCdims_0 + \NumMC_1 \MLMCdims_1 + \ldots + \NumMC_\MLMClevel \MLMCdims_\MLMClevel } $, 
  $ \omega \in \Omega $ 
that
\begin{equation}
  G_m( \theta, \omega )
  =
  ( \nabla_{ \theta } F_m )( \theta, \omega ).
\end{equation}

\subsection{Learning the random variables with stochastic gradient descent}
\label{MLMC:SGD}
Let $ (\gamma_m)_{m \in \N} \subseteq (0,\infty) $ and 
let 
$ 
  \Theta \colon \N_0 \times \Omega \to \R^{ \NumMC_0 \MLMCdims_0 + \NumMC_1 \MLMCdims_1 + \ldots + \NumMC_\MLMClevel \MLMCdims_\MLMClevel } 
$ 
satisfy 
for all $ m \in \N $ that 
$ 
  \Theta_0 = 
  (
    W_0^{0, 1}, W_0^{0, 2}, \ldots, W_0^{0, \NumMC_1},
    W_1^{0, 1}, W_1^{0, 2}, \ldots, W_1^{0, \NumMC_2},
    \ldots,
    W_\MLMClevel^{0, 1}, W_\MLMClevel^{0, 2}, \ldots, W_\MLMClevel^{0, \NumMC_\MLMClevel}
  )
$
and
\begin{equation}
  \Theta_{ m }
  =
  \Theta_{ m - 1 }
  - 
  \gamma_m 
  G_m( \Theta_{ m - 1 } ) 
  .
\end{equation}
\new{
For every sufficiently large 
$m \in \N$
we propose to employ the random function 
$
	\Param \times \Omega \ni (p, \omega) \mapsto \LRVANN(p, \Theta_m(\omega)) \in \R
$
as an approximation for the target function
$
	\Param \ni p \mapsto u(p) \in \R
$ in \eqref{MLMC:eq1}.}

\subsection{Description of the proposed approximation algorithm}
\label{MLMC:description}
\begin{algo}
Let $ \dimParam, \Batchsize \in \N $, $\MLMClevel, \MLMCreference \in \N_0$, 
$ \NumMC_0, \NumMC_1, \ldots, \NumMC_\MLMClevel \in \N $,
$(\MLMCdims_l)_{l \in \N_0} \subseteq \N$, $ (\gamma_m)_{m \in \N} \subseteq (0,\infty) $,
let $ \Param \subseteq \R^{ \dimParam } $ be measurable, 
let 
$
\eulerapprox_l \colon
\Param \times \R^{\MLMCdims_l} \to
\R
$,
$l \in \N_0$,
satisfy for all 
	$l \in \N_0$,
	$ p \in \Param $ 
that
\(
  \R^{\MLMCdims_l} \ni w \mapsto \eulerapprox_l(p, w) \in \R
\)
is continuously differentiable,
let $\MLMCproj_l \colon \R^{\MLMCdims_l} \to \R^{\MLMCdims_{l-1}}$, $l \in \{1, 2, \ldots, L \}$, be continuously differentiable,
let $\LRVANN \colon \Param \times \R^{  \NumMC_0 \MLMCdims_0 + \NumMC_1 \MLMCdims_1 + \ldots + \NumMC_\MLMClevel \MLMCdims_\MLMClevel}  \to \R$ satisfy for all
	$p \in \Param$, 
	$\theta = (\theta_1, \ldots, \theta_{\NumMC_0 \MLMCdims_0 + \NumMC_1 \MLMCdims_1 + \ldots + \NumMC_\MLMClevel \MLMCdims_\MLMClevel})
	\in \R^{ \NumMC_0 \MLMCdims_0 + \NumMC_1 \MLMCdims_1 + \ldots + \NumMC_\MLMClevel \MLMCdims_\MLMClevel } $
that
\begin{multline}  
	\LRVANN(p, \theta)
=
     \frac{1}{\NumMC_0}
     \left[
   	  \sum_{m = 1}^{\NumMC_0}
   		 	\eulerapprox_0\big(p, (\theta_{(m-1)\MLMCdims_0 + k})_{k \in \{1, 2, \ldots, \MLMCdims_0\}}\big)
     \right]\\
      +
     \sum_{l = 1}^\MLMClevel
       \frac{1}{\NumMC_l}
       \Bigg[
         \sum_{m = 1}^{\NumMC_l}
         \eulerapprox_l\big(p, (\theta_{
         	\NumMC_0 \MLMCdims_0 + \NumMC_1 \MLMCdims_1 + \ldots + \NumMC_{l-1} \MLMCdims_{l-1} + (m-1)\MLMCdims_l + k
         })_{k \in \{1, 2, \ldots, \MLMCdims_l\}}\big)\\
          -
         \eulerapprox_{l-1}\big(p, \MLMCproj_l(
         	(\theta_{
         	         	\NumMC_0 \MLMCdims_0 + \NumMC_1 \MLMCdims_1 + \ldots + \NumMC_{l-1} \MLMCdims_{l-1} + (m-1)\MLMCdims_l + k
         	         })_{k \in \{1, 2, \ldots, \MLMCdims_l\}}
         )\big)
       \Bigg],
\end{multline}
let $ ( \Omega, \F, \P ) $ be a probability space, 
let 
$ 
  W_l^{ m, \MCvariable } \colon \Omega \to \R^{ \dimBM } 
$, 
$l, m, \MCvariable \in \N_0 $, 
be independent random variables which
satisfy 
for all
$l, m, \MCvariable \in \N_0 $, $A \in \mathcal{B}( \R^{ \MLMCdims_l } ) $
that
$
  \P( W_l^{ m, \MCvariable} \in A ) = \P( W_l^{ 0, 0} \in A ) 
$,
let 
$  
  \ParamRV_{ 
    m, \BatchVariable 
  }
  \colon \Omega \to \Param
$, $ m, \BatchVariable \in \N $, 
be i.i.d.\ random variables, 
assume that 
$
  (
    \ParamRV_{ 
      m, \BatchVariable 
    }
  )_{
    (m, \BatchVariable) \in \N^2
  }
$
and 
$
  (
  W_l^{ m, \MCvariable }
  )_{
    (l, m, \MCvariable) \in (\N_0)^3
  }
$
are independent,
for every 
	$m \in \N$
let
$
F_m 
  \colon \allowbreak\R^{  \NumMC_0 \MLMCdims_0 + \NumMC_1 \MLMCdims_1 + \ldots + \NumMC_\MLMClevel \MLMCdims_\MLMClevel } 
  \times\Omega\to\R
$
satisfy 
for all 
$
\theta 
  \in \R^{ \NumMC_0 \MLMCdims_0 + \NumMC_1 \MLMCdims_1 + \ldots + \NumMC_\MLMClevel \MLMCdims_\MLMClevel} 
$
that
\begin{equation}
\label{T_B_D}
\begin{split} 
	F_m( \theta ) 
&=
	\frac{ 1 }{ \Batchsize }
	\Bigg[
		\sum_{ \BatchVariable = 1 }^{ \Batchsize }
		\left|
			\eulerapprox_\MLMCreference\big( 
				\ParamRV_{ 
					m, \BatchVariable 
				},
				W_\MLMCreference^{m,  \BatchVariable}
			\big)
			-	
			\LRVANN(
				\ParamRV_{ 
					m, \BatchVariable 
				},
				\theta)
		\right|^2
	\Bigg],
\end{split}
\end{equation}
for every 
	$m \in \N$
let 
$ 
  G_m 
  \colon \R^{\NumMC_0 \MLMCdims_0 + \NumMC_1 \MLMCdims_1 + \ldots + \NumMC_\MLMClevel \MLMCdims_\MLMClevel} \times \Omega 
  \to \R^{\NumMC_0 \MLMCdims_0 + \NumMC_1 \MLMCdims_1 + \ldots + \NumMC_\MLMClevel \MLMCdims_\MLMClevel } 
$
satisfy 
for all 
$ \theta \in \linebreak \R^{\NumMC_0 \MLMCdims_0 + \NumMC_1 \MLMCdims_1 + \ldots + \NumMC_\MLMClevel \MLMCdims_\MLMClevel } $, 
$ \omega \in \Omega $
that
$
  G_m( \theta, \omega )
  =
  ( \nabla_{ \theta } F_m )( \theta, \omega )
$,
and
let 
$ 
  \Theta \colon \N_0 \times \Omega \to \R^{ \NumMC_0 \MLMCdims_0 + \NumMC_1 \MLMCdims_1 + \ldots + \NumMC_\MLMClevel \MLMCdims_\MLMClevel } 
$ 
satisfy 
for all $ m \in \N $ that 
$ 
  \Theta_0 = 
  (
    W_0^{0, 1}, W_0^{0, 2}, \ldots, W_0^{0, \NumMC_1},
    W_1^{0, 1}, W_1^{0, 2}, \allowbreak \ldots, W_1^{0, \NumMC_2}, 
    \ldots,
    W_\MLMClevel^{0, 1}, W_\MLMClevel^{0, 2}, \ldots, W_\MLMClevel^{0, \NumMC_\MLMClevel}
  )
$
and
\begin{equation}
  \Theta_{ m }
  =
  \Theta_{ m - 1 }
  - 
  \gamma_m 
  G_m( \Theta_{ m - 1 } ) 
  .
\end{equation}
\end{algo}

\section{LRV strategy in the case of multilevel Picard approximations}
\label{sect:MLP}

In this section we consider the case where the target function to be approximated with the LRV strategy is given as the solution of a suitable stochastic fixed point equation (SFPE); see \eqref{MLP:eq1} in \cref{MLP:target}.
The setup in \cref{MLP:target} includes, as important special cases, the solutions of several semilinear parabolic PDEs. 
Two examples of such semilinear parabolic PDEs
(heat PDEs with Lipschitz nonlinearities and
Black-Scholes PDEs with Lipschitz nonlinearities)
are presented  in \cref{MLP:example}.
As proposal algorithms for the LRV strategy we present in \cref{MLP:proposal} a slight generalization of the MLP algorithm for semilinear PDEs in Hutzenthaler et al.\ \cite{Hutzenthaler_2020} (cf.\ also E et al.\ \cite{E2019,EHutzenthalerJentzenKruse16published}).
In analogy to the previous sections,  in \cref{MLP:replacing,MLP:Loss,MLP:SGD} an algorithm for the considered approximation problem is derived based on the LRV strategy with this MLP algorithm as proposal algorithm. 
Finally, the problem and the algorithm are summarized in one single framework in \cref{MLP:description}.

\subsection{Parametric solutions of stochastic fixed point equations}
\label{MLP:target}

Let $ \dimParam, \dimWRV \in \N $,
let $ \Param \subseteq \R^{ \dimParam } $ be measurable, 
let 
$ \phi \colon \Param \times \R^\dimWRV \times \R \to \R $,
$\MLPuEval \colon \Param \times \R^\dimWRV \to \Param$, and
$ u \colon \Param \to \R $ be measurable, 
let $ ( \Omega, \F, \P ) $ be a probability space, 
let $ \WRV \colon \Omega \to \R^\dimWRV $ be a random variable,
and
assume for all $ p \in \Param $ that 
$
  \E\big[
    | \phi( p, \WRV, u( \MLPuEval(p, \WRV))) |
  \big]
  < \infty
$
and
\begin{equation}
\label{MLP:eq1}
  u( p ) = 
  \mathbb{E} \big[ 
    \phi\big( p, \WRV, u\big( \MLPuEval(p, \WRV)\big)\big)
  \big].
\end{equation}
The goal of this section is to derive an  algorithm to approximately compute the function
$
	u \colon \Param \to \R
$
given through the SFPE in \eqref{MLP:eq1}.

\subsection{Approximations for parametric semilinear partial differential equations}
\label{MLP:example}
\subsubsection{Heat partial differential equations with Lipschitz nonlinearities}
In the case 
where $T \in (0,\infty)$, $\dimBM \in \N$,
where 
	$\dimParam = \dimWRV = 1 + \dimBM$ and
	$\Param = [0,T] \times\R^\dimBM$,
where $u \in C([0,T] \times\R^\dimBM,\R)$ is at most polynomially growing,
where $f,g \in C(\R^\dimBM, \R)$ are at most polynomially growing,
where for all 
	$p = (t, x) \in \Param$,
	$w = (w_1, w_2) \in \R^{\dimBM} \times [0,1]$,
	$v \in \R$
it holds that
$
	\phi(p, w, v)
=
	g(x + (T-t)^{\nicefrac{1}{2}}w_1) + (T-t)f(v)
$
and
$
	\MLPuEval(p, w)
=
	(t + w_2(T-t), x + (w_2(T-t))^{\nicefrac{1}{2}}w_1)
$,
where $N \colon \Omega \to \R^{\dimBM}$ is a standard normal random vector,
where $R \colon \Omega \to [0,1]$ is a continuous uniformly distributed random variable on $[0,1]$ (is a $\mathcal{U}_{[0,1]}$-distributed random variable),
where $N$ and $R$ are independent, and
where $\WRV = (N, R)$
observe that 
\begin{enumerate}[label=(\roman *)]
\item 
it holds for all
	$p = (t, x) \in \Param$
that
\begin{equation}
\begin{split} 
	u(p) = u(t, x) 
&= 
	\Exp{
		g(x + (T-t)^{\nicefrac{1}{2}}N) + (T-t) f\big(u(t + R(T-t), x + (R(T-t))^{\nicefrac{1}{2}}N)\big)
	}\\
&=
	\Exp{
		g(x + (T-t)^{\nicefrac{1}{2}}N) + \int_t^T f\big(u(s, x + (s-t)^{\nicefrac{1}{2}}N)\big) \d s 
	}
\end{split}
\end{equation}
and
\item 
it holds that $u$ is a viscosity solution of 
\begin{equation}
	(\tfrac{\partial u}{\partial t})(t,x) 
	+
	\tfrac{1}{2} 
	(\Delta_x u)(t, x) 
	+
	f(u(t, x))
=
	0
\end{equation}
with $u(T, x) = g(x)$ for $(t, x) \in (0,T) \times \R^\dimBM$
(cf.\ Beck et al.\ \cite[Theorem 1.1]{beck21}).
\end{enumerate}

\subsubsection{Black-Scholes partial differential equations with Lipschitz nonlinearities}

In the case 
where $T \in (0,\infty)$, $\dimBM \in \N$,
where 
	$\dimParam = \dimWRV = 1 + 3\dimBM $ and
	$\Param = [0,T] \times\R^\dimBM\times\R^\dimBM\times\R^\dimBM$,
where for all 
	$\alpha, \beta \in \R^\dimBM$
it holds that 
$
	[0,T] \times\R^\dimBM \ni (t, x) 
	\mapsto
	u(t,x,\alpha,\beta) \in \R
$ 
is continuous and at most polynomially growing,
where 
	$X = (X^{(i)})_{i \in \{1, 2, \ldots, \dimBM\}} \colon [0,T] \times \R^\dimBM\times\R^\dimBM\times\R^\dimBM \times \R^\dimBM \to \R^\dimBM$
satisfies for all 
	$i \in \{1, 2, \ldots, \dimBM\}$
	$t \in [0,T]$,
	$
		x = (x_1,\ldots,x_\dimBM), 
		\alpha = (\alpha_1,\ldots,\alpha_\dimBM), 
		\beta = (\beta_1,\ldots,\beta_\dimBM),
		n = (n_1, \ldots, n_\dimBM)
	\in
		\R^{\dimBM}
	$
that
\begin{equation}
\begin{split} 
	X^{(i)}(t, x, \alpha, \beta, n)
=
	x_i \exp\!\left(
		\big(\alpha_i - \tfrac{|\beta_i|^2}{2}\big)t + t^{1/2}\beta_i n_i 
	\right),
\end{split}
\end{equation}
where $f,g \in C(\R^\dimBM, \R)$ are at most polynomially growing,
where for all 
	$p = (t, x, \alpha, \beta) \in \Param$,
	$w = (w_1, w_2) \in \R^{\dimBM} \times [0,1]$,
	$v \in \R$
it holds that
$
	\phi(p, w, v)
=
	g(X(T-t, x, \alpha, \beta, w_1)) + (T-t)f(v)
$
and
$
	\MLPuEval(p, w)
=
	(t + w_2(T-t),  X(w_2(T-t), x, \alpha, \beta, w_1))
$,
where $N \colon \Omega \to \R^{\dimBM}$ is a standard normal random vector,
where $R \colon \Omega \to [0,1]$ is an $\mathcal{U}_{[0,1]}$-distributed random variable,
where $N$ and $R$ are independent, and
where $\WRV = (N, R)$
observe that 
\begin{enumerate}[label=(\roman *)]
\item 
it holds for all
	$p = (t, x, \alpha, \beta) \in \Param$
that
\begin{equation}
\begin{split} 
	u(p) 
&= 
	u(t, x, \alpha, \beta) \\
&= 
	\Exp{
		g\big(
			X(T-t, x, \alpha, \beta, N)
		\big) 
		+ 
		(T-t)
		f\Big(
			u\big(t + R(T-t),  X(R(T-t), x, \alpha, \beta, N)\big)
		\Big)
	}\\
&=
	\Exp{
		g\big(
			X(T-t, x, \alpha, \beta, N)
		\big) 
		+ 
		\int_t^T
			f\big(
				u\big(s,  X(s-t, x, \alpha, \beta, N)\big)
			\big)
		\d s
	}
\end{split}
\end{equation}
and
\item 
it holds for all
$
	\alpha = (\alpha_1,\ldots,\alpha_\dimBM), 
	\beta = (\beta_1,\ldots,\beta_\dimBM)
	\in \R^\dimBM
$
that 
$
	[0,T] \times\R^\dimBM \ni (t, x) 
	\mapsto
	u(t,x,\alpha,\beta) \in \R
$ 
is a viscosity solution of 
\begin{multline}
	\big(\tfrac{\partial u}{\partial t}\big)(t,x,\alpha,\beta) 
	+
	\left[
  	\sum_{i = 1}^d 
	    \tfrac{ | \beta_i |^2 |x_i|^2}{2} 
  	    \big( \tfrac{\partial^2 u}{\partial (x_i)^2 } \big)(t,x,\alpha,\beta)
    \right] \\
    +
	\left[
    \sum_{i = 1}^d 
	    \alpha_i
	     x_i
	    \big( \tfrac{\partial u}{\partial x_i } \big)(t,x,\alpha,\beta)
	\right]
	+
  f( u(t, x,\alpha,\beta) )
=
	0
\end{multline}
with $u(T, x) = g(x)$ for $(t, x) = (t, x_1, x_2, \ldots, x_\dimBM) \in (0,T) \times \R^\dimBM$
(cf., e.g., Beck et al.\ \cite[Theorem 1.1]{beck21}).
\end{enumerate}

\subsection{Multilevel Picard approximations}
\label{MLP:proposal}
Let $\NumMC \in \N$, $\MLPindexset = \cup_{n = 1}^\infty \Z^n$, 
let $\MLPRVs^{\MLPindex} \colon \Omega \to \R^\dimWRV$, $\MLPindex \in \MLPindexset$, be i.i.d.\ random variables which satisfy for all 
	$ A \in \mathcal{B}( \R^{ \dimWRV } ) $ 
that 
$
  \P( \MLPRVs^{ 0 } \in A ) = \P( \WRV \in A ) 
$, and
let $\MLPAlg_n^\MLPindex \colon \Param \times \Omega \to \R$, $n \in \Z$, $\MLPindex \in \MLPindexset$, satisfy\footnote
{
Note that for all
	$n \in \N$,
	$\MLPindex  = (\MLPindex_1, \ldots, \MLPindex_n) \in \Z^{n}$,
	$k, m \in \Z$ we denote by 
	$(\MLPindex, k, m) \in \Z^{n+2}$
	the vector given by
	$(\MLPindex, k, m) = (\MLPindex_1, \MLPindex_2, \ldots, \MLPindex_n, k, m)$.
} for all 
	$n \in \Z$, 
	$\MLPindex \in \MLPindexset$,
	$p \in \Param$
that
\begin{multline}
\label{MLP:eq2}
	\MLPAlg_n^\MLPindex(p)
=
	\sum_{k = 0}^{n-1}
		\frac{1}{\NumMC^{n-k}}
		\sum_{\MCvariable = 1}^{\NumMC^{n-k}} \Big[
			\phi\big(
				p, 
				\MLPRVs^{(\MLPindex,k,\MCvariable)},
				\MLPAlg_k^{(\MLPindex,k,\MCvariable)}\big(
					\MLPuEval(p, \MLPRVs^{(\MLPindex,k,\MCvariable)})
				\big)
			\big)\\
			-
			\mathbbm{1}_\N(k)
			\phi\big(
				p, 
				\MLPRVs^{(\MLPindex,k,\MCvariable)},
				\MLPAlg_{k-1}^{(\MLPindex,k-1,-\MCvariable)}\big(
					\MLPuEval(p, \MLPRVs^{(\MLPindex,k,\MCvariable)})
				\big)
			\big)
		\Big].
\end{multline}
Note that under suitable integrability conditions \eqref{MLP:eq2} implies for all
	$n \in \N$,
	$p \in \Param$,
	$\MLPindex \in \MLPindexset$
that
$
	\MLPAlg_0^\MLPindex(p) = 0
$
and
\begin{equation}
\label{T_B_D}
\begin{split} 
	\Exp{
		\MLPAlg_n^\MLPindex(p)
	}
&=
	\sum_{k = 0}^{n-1}
		\frac{1}{\NumMC^{n-k}}
		\sum_{\MCvariable = 1}^{\NumMC^{n-k}} \Big[
			\Exp{
				\phi\big(
					p, 
					\MLPRVs^{(\MLPindex,k,\MCvariable)},
					\MLPAlg_k^{(\MLPindex,k,\MCvariable)}\big(
						\MLPuEval(p, \MLPRVs^{(\MLPindex,k,\MCvariable)})
					\big)
				\big)
			}\\
			&\quad-
			\mathbbm{1}_\N(k)
			\Exp{
				\phi\big(
					p, 
					\MLPRVs^{(\MLPindex,k,\MCvariable)},
					\MLPAlg_{k-1}^{(\MLPindex,k-1,-\MCvariable)}\big(
						\MLPuEval(p, \MLPRVs^{(\MLPindex,k,\MCvariable)})
					\big)
				\big)
			}
		\Big]\\
&=
	\sum_{k = 0}^{n-1}
	\Big[
			\Exp{
				\phi\big(
					p, 
					\MLPRVs^{0},
					\MLPAlg_k^{0}\big(
						\MLPuEval(p, \MLPRVs^{0})
					\big)
				\big)
			}
			-
			\mathbbm{1}_\N(k)
			\Exp{
				\phi\big(
					p, 
					\MLPRVs^{0},
					\MLPAlg_{k-1}^{0}\big(
						\MLPuEval(p, \MLPRVs^{0})
					\big)
				\big)
			}
		\Big]\\
&=
	\Exp{
		\phi\big(
			p, 
			\MLPRVs^{0},
			\MLPAlg_{n-1}^{0}\big(
				\MLPuEval(p, \MLPRVs^{0})
			\big)
		\big)
	}.
\end{split}
\end{equation}
Induction thus shows that for every $n \in \N$ the identically distributed random functions 
$\MLPAlg_n^\MLPindex \colon \Param \times \Omega \to \R$,  $\MLPindex \in \MLPindexset$, correspond in expectation to the $n$-th fixed point iterate of the fixed point equation in \eqref{MLP:eq1}.
For every $n \in \N$ the recursive definition of $(\MLPAlg_n^\MLPindex)_{\MLPindex \in \MLPindexset}$ in \eqref{MLP:eq2} thus represents an approximated fixed point iteration step in which the expectation of the fixed point iteration is approximated by a MLMC sum over previously computed approximated fixed point iterates $(\MLPAlg_{k}^\MLPindex)_{(\MLPindex, k) \in \MLPindexset \times \{0, 1, \ldots, n-1\}}$.
Under suitable assumptions
(see, e.g., Hutzenthaler et al.\ \cite{HutzenthalerJentzenvW20} for precise assumptions and a more detailed derivation of MLP algorithms in the case of semilinear PDEs)
we expect for sufficiently large $n \in \N$ that
\begin{equation}
\label{MLP:eq3}
\begin{split} 
	u(p)
\approx
	\MLPAlg_n^0(p).
\end{split}
\end{equation}

\subsection{Replacing the random variables in multilevel Picard approximations}
\label{MLP:replacing}

\newcommand{\MLPsubcount}{c}

Let $(\MLPcount_n)_{n \in \Z} \subseteq \N_0$ satisfy for all
	$n \in \Z$
that
\begin{equation}
\label{MLP:eq4}
\begin{split} 
	\MLPcount_n = \sum_{k = 0}^{n-1} \NumMC^{n-k}(1 + \MLPcount_k + \MLPcount_{k-1}),
\end{split}
\end{equation}
for every 
	$n, \MCvariable \in \N$, $l \in \{1, 2, \ldots, n\}$
let $\MLPsubcount_{n,l, \MCvariable} \in \N$ satisfy
\begin{equation}
\label{MLP:eq4.1}
\begin{split} 
	\MLPsubcount_{n,l, \MCvariable} 
= 
	\left[
		\sum_{k = 0}^{l-1} \NumMC^{n-k}(1 + \MLPcount_k +\MLPcount_{k-1})
	\right]
	+
	(\MCvariable-1) (1 + \MLPcount_l + \MLPcount_{l-1})
	+ 
	1,
\end{split}
\end{equation}
let $\LRVANN_n \colon \Param \times \R^{\MLPcount_n \dimWRV}$, $n \in \N_0$, satisfy for all
	$n \in \N$,
	$p \in \Param$,
	$\theta_1,\theta_2,\ldots, \theta_{\MLPcount_n} \in  \R^{\dimWRV}$
that
	$\LRVANN_0(p) = 0$
and
\begin{multline}
\label{MLP:eq5}
	\LRVANN_n(p, (\theta_r)_{r \in \{1, 2, \ldots, \MLPcount_n \}})
=
	\left[
		\frac{1}{\NumMC^{n}}
		\sum_{\MCvariable = 1}^{\NumMC^{n}}
			\phi\big(
				p, 
				\theta_{\MCvariable},
				0
			\big)
	\right]
\\ 
	+
	\sum_{k = 1}^{n-1}
		\frac{1}{\NumMC^{n-k}}
		\sum_{\MCvariable = 1}^{\NumMC^{n-k}} \bigg[
			\phi\Big(
				p, 
				\theta_{\MLPsubcount_{n,k,\MCvariable}}, 
				\LRVANN_k \big(
					\MLPuEval(p, \theta_{\MLPsubcount_{n,k,\MCvariable}})
					,
					(\theta_{\MLPsubcount_{n,k,\MCvariable} + r})_{r \in \{1, 2, \ldots, \MLPcount_k\}}					
				\big)
			\Big)
\\ 
			-
			\phi\Big(
				p, 
				\theta_{\MLPsubcount_{n,k,\MCvariable}},
				\LRVANN_{k-1}\big(
					\MLPuEval(p, \theta_{\MLPsubcount_{n,k,\MCvariable}})
					,
					(\theta_{\MLPsubcount_{n,k,\MCvariable} + \MLPcount_k + r})_{r \in \{1, 2, \ldots, \MLPcount_{k-1}\}}
				\big)
			\Big)
		\bigg],
\end{multline}
let $\MLPlevel \in \N$,
assume for all $p \in \Param$ that
$\R^{ \MLPApproxDim } \ni \theta \mapsto \LRVANN( p, \theta ) \in \R$ is continuously differentiable,  
and
let 
$\MLPsingleRV_k \colon \Omega \to \R^{\dimWRV}$, 
	$k \in \{1, 2, \ldots, \MLPcount_\MLPlevel\}$,
be i.i.d.\ random variables which satisfy
for all
	$B \in \mathcal{B}(\R^\dimWRV)$
that
$
	\P(\MLPsingleRV_1 \in B)
=
	\P(\WRV \in B)
$.
Observe that induction shows that for all
	$n \in \N_0$,
	$p \in \Param$,
	$\MLPindex \in \MLPindexset$
the number $\MLPcount_n \in \N$ corresponds, roughly speaking, to the number of realizations of $\dimWRV$-dimensional random variables required to compute one random realization of $\MLPAlg_n^\MLPindex(p)$.
Moreover, note that \eqref{MLP:eq2},  \eqref{MLP:eq4},  \eqref{MLP:eq4.1}, and \eqref{MLP:eq5} assure that for all 
	$p \in \Param$,
	$B \in \mathcal{B}(\R)$
it holds that
\begin{equation}
\label{MLP:eq6}
\begin{split} 
	\P\big(
		\LRVANN_\MLPlevel\big(
			p, (\MLPsingleRV_1, \MLPsingleRV_2, \ldots, \MLPsingleRV_{\MLPcount_\MLPlevel})
		\big) 
		\in B
	\big)
=
	\P(\MLPAlg_\MLPlevel^0(p) \in B).
\end{split}
\end{equation}
Combining this and \eqref{MLP:eq3} suggests that  for all
	$p \in \Param$
it holds that
\begin{equation}
\label{MLP:eq7}
\begin{split} 
	\LRVANN_\MLPlevel(p, \MLPsingleRV)
\approx
	u(p).
\end{split}
\end{equation}

\subsection{Random loss functions for fixed random variables in multilevel Picard approximations}
\label{MLP:Loss}
Let $\Batchsize, \MLPreference \in \N$, 
let 
$  
  \ParamRV_{ 
    m, \BatchVariable 
  }
  \colon \Omega \to \Param
$, $ m, \BatchVariable \in \N $, 
be i.i.d.\ random variables, 
assume that 
$
  (
    \ParamRV_{ 
      m, \BatchVariable 
    }
  )_{
    (m, \BatchVariable) \in \N^2
  }
$,
$
  (
    \MLPRVs^\MLPindex
  )_{\MLPindex \in \MLPindexset}
$,
and 
$(\MLPsingleRV_k)_{k \in \{1, 2, \ldots, \MLPcount_\MLPlevel\} }$
are independent,
for every 
	$m \in \N$
let
$
F_m 
  \colon \allowbreak\R^{  \MLPApproxDim } 
  \times\Omega\to\R
$
satisfy 
for all 
$
\theta 
  \in \R^{\MLPApproxDim} 
$
that
\begin{equation}
\label{T_B_D}
\begin{split} 
	F_m( \theta ) 
&=
	\frac{ 1 }{ \Batchsize }
	\Bigg[
		\sum_{ \BatchVariable = 1 }^{ \Batchsize }
		\left|
			\MLPAlg_{\MLPreference}^{(m, \BatchVariable)}( 
				\ParamRV_{ 
					m, \BatchVariable 
				}
			)
			-
			\LRVANN_{\MLPlevel}(
				\ParamRV_{ 
					m, \BatchVariable 
				},
				\theta
			)
		\right|^2
	\Bigg],
\end{split}
\end{equation}
and for every 
	$m \in \N$
let 
$ 
  G_m 
  \colon \R^{\MLPApproxDim} \times \Omega 
  \to \R^{\MLPApproxDim} 
$
satisfy 
for all 
	$ \theta \in \R^{\MLPApproxDim} $, $ \omega \in \Omega $ 
that
\begin{equation}
  G_m( \theta, \omega )
  =
  ( \nabla_{ \theta } F_m )( \theta, \omega ).
\end{equation}

\subsection{Learning the random variables with stochastic gradient descent}
\label{MLP:SGD}
Let $ (\gamma_m)_{m \in \N} \subseteq (0,\infty) $ and 
let 
$ 
  \Theta \colon \N_0 \times \Omega \to \R^{ \MLPApproxDim} 
$ 
satisfy 
for all $ m \in \N $ that 
$ 
	\Theta_0 
= 
	(\MLPsingleRV_1, \MLPsingleRV_2, \ldots, \MLPsingleRV_{\MLPcount_\MLPlevel})
$
and
\begin{equation}
  \Theta_{ m }
  =
  \Theta_{ m - 1 }
  - 
  \gamma_m 
  G_m( \Theta_{ m - 1 } ) 
  .
\end{equation}
\new{
For every sufficiently large 
$m \in \N$
we propose to employ the random function 
$
	\Param \times \Omega \ni (p, \omega) \mapsto \LRVANN_\MLPlevel(p, \Theta_m(\omega)) \in \R
$
as an approximation for the target function
$
	\Param \ni p \mapsto u(p) \in \R
$ in \eqref{MLP:eq1}.}

\subsection{Description of the proposed  approximation algorithm}
\label{MLP:description}

\begin{algo}
Let 
	$ \dimParam, \dimWRV, \NumMC, \MLPlevel, \MLPreference, \Batchsize  \in \N $, 
	$ (\gamma_m)_{m \in \N} \subseteq (0,\infty) $,
let $ \Param \subseteq \R^{ \dimParam } $ be measurable, 
let 
$ \phi \colon \Param \times \R^\dimWRV \times \R \to \R $ and
$\MLPuEval \colon \Param \times \R^\dimWRV \to \Param$ 
be measurable, 
let $(\MLPcount_n)_{n \in \Z} \subseteq \N_0$ satisfy for all
	$n \in \Z$
that
\begin{equation}
\begin{split} 
	\MLPcount_n = \sum_{k = 0}^{n-1} \NumMC^{n-k}(1 + \MLPcount_k + \MLPcount_{k-1}),
\end{split}
\end{equation}
for every
	$n, \MCvariable \in \N$, $l \in \{1, 2, \ldots, n\}$
let $ \MLPsubcount_{n,l, \MCvariable}\in \N $ satisfy
\begin{equation}
\begin{split} 
	\MLPsubcount_{n,l, \MCvariable} 
= 
	\left[
		\sum_{k = 0}^{l-1} \NumMC^{n-k}(1 + \MLPcount_k +\MLPcount_{k-1})
	\right]
	+
	(\MCvariable-1) (1 + \MLPcount_l + \MLPcount_{l-1})
	+ 
	1,
\end{split}
\end{equation}
let $\LRVANN_n \colon \Param \times \R^{\MLPcount_n \dimWRV}$, $n \in \N_0$, satisfy for all
	$n \in \N$,
	$p \in \Param$,
	$\theta_1,\theta_2,\ldots, \theta_{\MLPcount_n} \in  \R^{\dimWRV}$
that
	$\LRVANN_0(p) = 0$
and
\begin{multline}
	\LRVANN_n(p, (\theta_1,\theta_2,\ldots, \theta_{\MLPcount_n}))
=
	\left[
		\frac{1}{\NumMC^{n}}
		\sum_{\MCvariable = 1}^{\NumMC^{n}}
			\phi\big(
				p, 
				\theta_{\MCvariable},
				0
			\big)
	\right]
\\ 
	+
	\sum_{k = 1}^{n-1}
		\frac{1}{\NumMC^{n-k}}
		\sum_{\MCvariable = 1}^{\NumMC^{n-k}} \bigg[
			\phi\Big(
				p, 
				\theta_{\MLPsubcount_{n,k,\MCvariable}}, 
				\LRVANN_k \big(
					\MLPuEval(p, \theta_{\MLPsubcount_{n,k,\MCvariable}})
					,
					(\theta_{\MLPsubcount_{n,k,\MCvariable} + r})_{r \in \{1, 2, \ldots, \MLPcount_k\}}
			\Big)
\\ 
			-
			\phi\Big(
				p, 
				\theta_{\MLPsubcount_{n,k,\MCvariable}},
				\LRVANN_{k-1}\big(
					\MLPuEval(p, \theta_{\MLPsubcount_{n,k,\MCvariable}})
					,
					(\theta_{\MLPsubcount_{n,k,\MCvariable} + \MLPcount_k + r})_{r \in \{1, 2, \ldots, \MLPcount_{k-1}\}}
				\big)
			\Big)
		\bigg],
\end{multline}
assume for all $p \in \Param$ that
$\R^{ \MLPApproxDim } \ni \theta \mapsto \LRVANN( p, \theta ) \in \R$ is continuously differentiable,  
let $ ( \Omega, \F, \P ) $ be a probability space, 
let $\WRV \colon \Omega \to \R^\dimWRV$ be a random variable,
let $\MLPsingleRV_k \colon \Omega \to \R^{\dimWRV}$,
	$k \in \{1, 2, \ldots, \MLPcount_\MLPlevel\}$,
be i.i.d.\ random variables,
let $\MLPRVs^{ m, \BatchVariable}_k \colon \Omega \to \R^\dimWRV$, $m, \BatchVariable \in \N$, $k \in \{1, 2, \ldots, \MLPcount_\MLPreference\}$, be i.i.d.\ random variables,
assume for all
	$B \in \mathcal{B}(\R^\dimWRV)$
that
$
	\P(\MLPsingleRV_1 \in B)
=
	\P(\WRV \in B)
=
	\P(\MLPRVs^{1,1}_1 \in B)
$,
let 
$  
  \ParamRV_{ 
    m, \BatchVariable 
  }
  \colon \Omega \to \Param
$, $ m, \BatchVariable \in \N $, 
be i.i.d.\ random variables, 
assume that 
$
  (
    \ParamRV_{ 
      m, \BatchVariable 
    }
  )_{
    (m, \BatchVariable) \in \N^2
  }
$, 
$
  (
    \MLPRVs^{ m, \BatchVariable}_k
  )_{(m, \BatchVariable,k) \in \N^2 \times \{1, 2, \ldots, \MLPcount_\MLPreference\}}
$,
and 
$(\MLPsingleRV_k)_{k \in \{1, 2, \ldots, \MLPcount_\MLPlevel\} }$
are independent,
for every 
	$m \in \N$
let
$
F_m 
  \colon \allowbreak\R^{  \MLPApproxDim} 
  \times\Omega\to\R
$
satisfy 
for all 
$
\theta 
  \in \R^{\MLPApproxDim} 
$
that
\begin{equation}
\begin{split} 
	F_m( \theta ) 
&=
	\frac{ 1 }{ \Batchsize }
	\Bigg[
		\sum_{ \BatchVariable = 1 }^{ \Batchsize }
		\left|
			\LRVANN_{\MLPreference}( 
				\ParamRV_{ 
					m, \BatchVariable 
				},
				( \MLPRVs^{ m, \BatchVariable}_1,  \MLPRVs^{ m, \BatchVariable}_2, \ldots,  \MLPRVs^{ m, \BatchVariable}_{\MLPcount_\MLPreference})
			)
			-
			\LRVANN_\MLPlevel(
				\ParamRV_{ 
					m, \BatchVariable 
				},
				\theta
			)
		\right|^2
	\Bigg],
\end{split}
\end{equation}
for every 
	$m \in \N$
let 
$ 
  G_m 
  \colon \R^{\MLPApproxDim} \times \Omega 
  \to \R^{\MLPApproxDim} 
$
satisfy 
for all 
	$ \theta \in \R^{\MLPApproxDim} $, $ \omega \in \Omega $ 
that
$
  G_m( \theta, \omega )
  =
  ( \nabla_{ \theta } F_m )( \theta, \omega )
$,
and
let 
$ 
  \Theta \colon \N_0 \times \Omega \to \R^{ \MLPApproxDim} 
$ 
satisfy 
for all $ m \in \N $ that 
$ 
  \Theta_0 = 
  (\MLPsingleRV_1, \MLPsingleRV_2, \ldots, \MLPsingleRV_{\MLPcount_\MLPlevel})
$
and
$
  \Theta_{ m }
  =
  \Theta_{ m - 1 }
  - 
  \gamma_m 
  G_m( \Theta_{ m - 1 } ) 
$.
\end{algo}

\section{LRV strategy in the case of a general proposal algorithm}
\label{sect:general}

In this section we derive and formulate the LRV strategy in its most general form, which contains all the algorithms derived in the previous sections as special cases.
Roughly speaking, we want to approximate a target function 
	(cf.\ $u \colon \Param \to \R^{\dimSolution}$ in \cref{general:target}) 
for which we already have a generic stochastic approximation algorithm 
	(cf.\ $\ApproxAlg \colon \Param \times \R^{\ApproxAlgdim} \to \R^{\dimSolution}$ and $ \ApproxAlgRV \colon \Omega \to \R^\ApproxAlgdim $ in \eqref{general:eq1} in \cref{general:target}). 
We refer to this algorithm as \emph{proposal algorithm}. 
Moreover, we assume that we are able to generate random reference solutions which approximate the target function at every point in expectation 
	(cf.\ $\Param \times \Omega \ni (p, \omega) \mapsto \ApproxRef(p, \ApproxRefRV(\omega)) \in \R^{\dimSolution}$ in \eqref{general:eq2} in \cref{general:target}).

In the next few sentences we briefly sketch in words the LRV strategy in this general case.
The first step of the LRV strategy is to consider the random variables in the stochastic approximation algorithm as parameters for a parametric family of functions (corresponding to 
	$\Param \ni p \mapsto \ApproxAlg(p, \theta) \in \R^\dimSolution$, 
		$\theta \in \R^\ApproxAlgdim$, 
in \cref{general:target}); 
see \cref{general:Loss}.
The goal of the LRV strategy is then to "learn" parameters whose corresponding function yields a good approximation of the target function 
$u \colon \Param \to \R^{\dimSolution}$.
Taking this into account, the second step of the LRV strategy is to minimize a loss function (cf.\ \eqref{general:eq3} in \cref{general:Loss}) measuring the distance between the approximating function and the approximate reference solutions with an SGD-type optimization method; see \cref{general:SGD}.
As initial guess for the SGD-type learning procedure we suggest to randomly choose the parameters according to the distribution of the random variables appearing in the proposal algorithm, since we know that this already results in a passable approximation. 
This feature of the LRV strategy is an important advantage when compared to standard deep learning methods
in the sense that the LRV strategy has already in the beginning of the training procedure a relatively small loss function.
The entire approach is presented in one single framework in \cref{general:description}.

One of the differences between this section and the previous sections is that in the previous sections we only used, for simplicity, the plain vanilla SGD method, however in this section we allow for various more sophisticated SGD-type optimization methods (cf.\ \eqref{general:eq4} in \cref{general:SGD}).
Some of these more sophisticated SGD-type methods are presented in \cref{general:SGDtype} as special cases of the framework in \cref{general:description}.

\subsection{Stochastic approximations for general target functions related to parametric expectations}
\label{general:target}
Let $ \dimParam, \ApproxAlgdim, \dimSolution, \ApproxRefdim \in \N $,
let $ \Param \subseteq \R^{ \dimParam } $ be measurable, 
let 
	$\ApproxAlg \colon \Param \times \R^{\ApproxAlgdim} \to \R^{\dimSolution}$, 
	$\ApproxRef \colon \Param \times \R^{\ApproxRefdim} \to \R^{\dimSolution}$,
	and
	$ u \colon \Param \to \R^{\dimSolution} $
be measurable,
assume for all $p \in \Param$ that $\R^{ \ApproxAlgdim } \ni w \mapsto \ApproxAlg( p, w ) \in \R^{\dimSolution}$ is continuously differentiable, 
let $ ( \Omega, \F, \P ) $ be a probability space, 
let 
$ \ApproxAlgRV \colon \Omega \to \R^\ApproxAlgdim $ and 
$ \ApproxRefRV \colon \Omega \to \R^\ApproxRefdim $ be independent random variables,
and assume for all $p \in \Param$ that $\E[\|\ApproxRef( p, \ApproxRefRV)\|]<\infty$.
We think of $(\ApproxAlg, \ApproxAlgRV)$ as a
stochastic approximation algorithm for $u \colon \Param \to \R^\dimSolution$ in the sense that for all $p \in \Param$ it holds that
\begin{equation}
\label{general:eq1} 
	u(p)
\approx
	\ApproxAlg( 
		p, \ApproxAlgRV
	)
\end{equation}
and for every $p \in \Param$ we think of $\E[\ApproxRef(p, \ApproxRefRV)]$ as a suitable approximation
\begin{equation}
\label{general:eq2} 
	u(p)
\approx
	\E[
		\ApproxRef( 
			p, \ApproxRefRV
		)
	]
\end{equation}
of $u(p)$.
The goal of this section is to derive an  algorithm to approximately compute the function
$
	u \colon \Param \to \R
$.

\subsection{Random loss functions for fixed random variables in stochastic approximations}
\label{general:Loss}
Let $\Batchsize \in \N$, 
let $\costfct \colon \R^{\dimSolution}\times \R^{\dimSolution} \to [0,\infty)$
continuously differentiable,
let 
$  
  \ParamRV_{ 
    m, \BatchVariable 
  }
  \colon \Omega \to \Param
$, $ m, \BatchVariable \in \N $, 
be i.i.d.\ random variables, 
let 
$ 
  W^{ m, \MCvariable } \colon \Omega \to \R^{ \ApproxRefdim } 
$, 
$ m, \MCvariable \in \N $, 
be i.i.d.\ random variables 
which satisfy for all $ A \in \mathcal{B}( \R^{ \dimBM } ) $ that 
$
  \P( W^{ 0, 0 } \in A ) = \P( \ApproxRefRV \in A ) 
$,
assume that 
$
  (
    \ParamRV_{ 
      m, \BatchVariable 
    }
  )_{
    (m, \BatchVariable) \in \N^2
  }
$
and 
$
	 (
	 	W^{ m, \MCvariable }
	 )_{ (m, \MCvariable) \in \N^2 }
$
are independent,
and for every 
	$m \in \N$
let
$
F_m 
  \colon \allowbreak\R^{ \ApproxAlgdim } 
  \times\Omega\to\R
$
satisfy 
for all 
$
\theta 
  \in \R^{ \ApproxAlgdim} 
$
that
\begin{equation}
\label{general:eq3}
\begin{split} 
	F_m( \theta ) 
&=
	\frac{ 1 }{ \Batchsize }
	\Bigg[
		\sum_{ \BatchVariable = 1 }^{ \Batchsize }
		\costfct\left(
			\ApproxRef \big( 
				\ParamRV_{ 
					m, \BatchVariable 
				},
				W^{m,  \BatchVariable}
			\big)
			,
			\ApproxAlg(
				\ParamRV_{ 
					m, \BatchVariable 
				},
				\theta
			)
		\right)
	\Bigg],
\end{split}
\end{equation}
and for every 
	$m \in \N$
let 
$ 
  G_m 
  \colon \R^{\ApproxAlgdim} \times \Omega 
  \to \R^{\ApproxAlgdim} 
$
satisfy 
for all 
	$ \theta \in \R^{\ApproxAlgdim } $, $ \omega \in \Omega $ 
that
\begin{equation}
  G_m( \theta, \omega )
  =
  ( \nabla_{ \theta } F_m )( \theta, \omega ).
\end{equation}

\subsection{Learning the random variables with stochastic gradient descent-type methods}
\label{general:SGD}
Let 
$
  \psi_m \colon \R^{m \ApproxAlgdim} \to \R^{\ApproxAlgdim}
$, $m \in \N$,
be functions and
let 
$ 
  \Theta \colon \N_0 \times \Omega \to \R^{ \ApproxAlgdim } 
$ 
satisfy 
for all $ m \in \N $ that 
$ 
  \Theta_0 = 
    \ApproxAlgRV
$
and
\begin{equation}
\label{general:eq4}
  \Theta_{ m }
  =
  \Theta_{ m - 1 }
   - 
   \psi_m((G_1(\Theta_0), G_2(\Theta_1), \dots, G_{m}(\Theta_{m-1}))).
\end{equation}
\new{
For every sufficiently large 
$m \in \N$
we propose to employ the random function 
$
	\Param \times \Omega \ni (p, \omega) \mapsto \ApproxAlg(p, \Theta_m(\omega)) \in \R
$
as an approximation for the target function
$
	\Param \ni p \mapsto u(p) \in \R
$.}

\subsection{Description of the proposed approximation algorithm}
\label{general:description}

\begin{algo}\label{algo:general}
Let 
	$ \dimParam, \ApproxAlgdim, \ApproxRefdim, \dimSolution, \Batchsize \in \N $,
let $ \Param \subseteq \R^{ \dimParam } $ be measurable, 
let
$\ApproxAlg \colon \Param \times \R^{\ApproxAlgdim} \to \R^{\dimSolution}$,
$\ApproxRef \colon \Param \times \R^{\ApproxRefdim} \to \R^{\dimSolution}$,
and 
$\costfct \colon \R^{\dimSolution} \times \R^{\dimSolution} \to [0,\infty)$
be functions,
let 
$
  \psi_m = (\psi_m^{(1)}, \ldots, \psi_m^{(\ApproxAlgdim)}) \colon \R^{m \ApproxAlgdim} \to \R^{\ApproxAlgdim}
$, $m \in \N$,
be functions,
let $ ( \Omega, \F, \P ) $ be a probability space, 
let 
$ \ApproxAlgRV \colon \Omega \to \R^\ApproxAlgdim $ be a random variable,
let 
$  
  \ParamRV_{ 
    m, \BatchVariable 
  }
  \colon \Omega \to \Param
$, $ m, \BatchVariable \in \N $, 
be i.i.d.\ random variables, 
let 
$ 
  W^{ m, \MCvariable } \colon \Omega \to \R^{ \ApproxRefdim } 
$, 
$ m, \MCvariable \in \N $, 
be i.i.d.\ random variables,
assume that 
$\ApproxAlgRV$,
$
  (
    \ParamRV_{ 
      m, \BatchVariable 
    }
  )_{
    (m, \BatchVariable) \in \N^2
  }
$, and
$
	 (
	 	W^{ m, \MCvariable }
	 )_{ (m, \MCvariable) \in \N^2 }
$
are independent,
for every 
	$m \in \N$
let
$
F_m 
  \colon \allowbreak\R^{ \ApproxAlgdim } 
  \times\Omega\to\R
$
satisfy 
for all 
$
\theta 
  \in \R^{ \ApproxAlgdim} 
$
that
\begin{equation}
\label{T_B_D}
\begin{split} 
	F_m( \theta ) 
&=
	\frac{ 1 }{ \Batchsize }
	\Bigg[
		\sum_{ \BatchVariable = 1 }^{ \Batchsize }
		\costfct \! \left(
			\ApproxRef \big( 
				\ParamRV_{ 
					m, \BatchVariable 
				},
				W^{m,  \BatchVariable}
			\big)
			,
			\ApproxAlg(
				\ParamRV_{ 
					m, \BatchVariable 
				},
				\theta
			)
		\right)
	\Bigg],
\end{split}
\end{equation}
for every 
	$ m \in \N $ 
let 
$ 
  G_m = (G_m^{(1)},\ldots, G_m^{(\ApproxAlgdim)})
  \colon \R^{\ApproxAlgdim} \times \Omega 
  \to \R^{\ApproxAlgdim} 
$
satisfy 
for all 
	$ \omega \in \Omega $, $ \theta \in \{ v \in \R^{\ApproxAlgdim } \colon F_m( \cdot, \omega)  \text{ is} \linebreak \text{differentiable at } v\} $
that
$
  G_m( \theta, \omega )
  =
  ( \nabla_{ \theta } F_m )( \theta, \omega )
$,
and
let 
$ 
  \Theta = (\Theta^{(1)},\ldots, \Theta^{(\ApproxAlgdim)}) \colon \N_0 \times \Omega \to \R^{ \ApproxAlgdim } 
$ 
be a stochastic process which satisfies  
for all $ m \in \N $ that 
$ 
  \Theta_0 = 
    \ApproxAlgRV
$
and
\begin{equation}
  \Theta_{ m }
  =
  \Theta_{ m - 1 }
   - 
   \psi_m(G_1(\Theta_0), G_2(\Theta_1), \dots, G_{m}(\Theta_{m-1})).
\end{equation}

\end{algo}

\subsection{Explicit descriptions of some popular stochastic gradient descent-type methods}
\label{general:SGDtype}

\subsubsection{Standard stochastic gradient descent (SGD)}
\label{sect:vanillaSGD}

\begin{lemma}
\label{vanillaSGD}
Assume \cref{algo:general},
let $(\gamma_m)_{m \in \N} \subseteq (0,\infty)$,
and assume for all 	
	$m \in \N$,
	$g_1, g_2, \ldots, g_m \in \R^{\ApproxAlgdim}$
that
$
	\psi_m(g_1, g_2, \ldots, g_m) = \gamma_m g_m
$.
Then it holds for all
	$ m \in \N $ 
that
\begin{equation}
  \Theta_{ m }
  =
  \Theta_{ m - 1 }
  - 
  \gamma_m 
  G_m( \Theta_{ m - 1 } ) 
  .
\end{equation}
\end{lemma}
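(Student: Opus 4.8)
The plan is to simply unwind the definition of the SGD-type recursion supplied by \cref{algo:general} and substitute the hypothesized form of the update maps $\psi_m$. No auxiliary constructions or estimates are needed; the statement is an immediate specialization of the general recursion, so the work consists entirely of bookkeeping with indices.

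First I would recall that, by the defining property of $\Theta \colon \N_0 \times \Omega \to \R^{\ApproxAlgdim}$ in \cref{algo:general}, it holds for every $m \in \N$ that
\begin{equation}
  \Theta_m = \Theta_{m-1} - \psi_m\big(G_1(\Theta_0), G_2(\Theta_1), \dots, G_m(\Theta_{m-1})\big).
\end{equation}
Next I would fix $m \in \N$ and apply the hypothesis that $\psi_m(g_1, g_2, \ldots, g_m) = \gamma_m g_m$ for all $g_1, g_2, \ldots, g_m \in \R^{\ApproxAlgdim}$. Instantiating this identity at the particular tuple $g_j = G_j(\Theta_{j-1})$ for $j \in \{1, 2, \ldots, m\}$ shows that only the last argument survives, so that $\psi_m(G_1(\Theta_0), \ldots, G_m(\Theta_{m-1})) = \gamma_m G_m(\Theta_{m-1})$. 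Substituting this back into the recursion yields $\Theta_m = \Theta_{m-1} - \gamma_m G_m(\Theta_{m-1})$, which is exactly the claimed identity.

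Since $m \in \N$ was arbitrary, this establishes the assertion for all $m \in \N$ and completes the argument. There is genuinely no hard part here: the only subtlety worth flagging is making sure the arguments of $\psi_m$ are correctly matched to the gradient evaluations $G_j(\Theta_{j-1})$ appearing in the general update, but once this indexing is read off correctly from \cref{algo:general} the conclusion is immediate.
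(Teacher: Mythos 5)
Your proof is correct and is exactly the intended argument: the paper states \cref{vanillaSGD} without an explicit proof precisely because it follows immediately by substituting the hypothesized form of $\psi_m$, evaluated at the tuple $(G_1(\Theta_0),\ldots,G_m(\Theta_{m-1}))$, into the recursion $\Theta_m = \Theta_{m-1} - \psi_m(G_1(\Theta_0),\ldots,G_m(\Theta_{m-1}))$ from \cref{algo:general}. Your indexing of the arguments matches the framework, so nothing is missing.
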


\subsubsection{Stochastic gradient descent with momentum (SGD with momentum)}
\label{sect:Momentum}

\begin{lemma}
\label{Momentum}
Assume \cref{algo:general},
let $(\gamma_m)_{m \in \N} \subseteq (0,\infty)$, $\alpha \in (0,1)$,
and assume for all 	
	$m \in \N$,
	$g_1, g_2, \ldots, g_m \in \R^{\ApproxAlgdim}$
that
$
	\psi_m(g_1, g_2, \ldots, g_m) = \gamma_m \sum_{k = 1}^m \alpha^{m-k}(1-\alpha) g_k
$.
Then 
there exists $\mathbf{m} \colon \N_0 \times \Omega \to \R^d$ such that 
for all 
	$m \in \N$ 
it holds that 
\begin{equation}
	\mathbf{m}_0 = 0,
\qquad
	\mathbf{m}_m = \alpha \mathbf{m}_{m-1} + (1-\alpha) G_m(\Theta_{m-1}), 
\qandq
	\Theta_n = \Theta_{n-1} - \gamma_n  \mathbf{m}_n. 
\end{equation}
\end{lemma}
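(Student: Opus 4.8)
The plan is to define the momentum process $\mathbf{m}$ explicitly as the exponentially weighted moving average of the past gradients and then to verify the two claimed recursions by direct computation. Concretely, I would set $\mathbf{m}_0 = 0$ and, for every $m \in \N$ and $\omega \in \Omega$, define
\begin{equation*}
	\mathbf{m}_m(\omega) = \sum_{k = 1}^m \alpha^{m-k}(1-\alpha)\, G_k(\Theta_{k-1}(\omega), \omega).
\end{equation*}
Under \cref{algo:general} each $G_k(\Theta_{k-1})$ is a well-defined $\R^{\ApproxAlgdim}$-valued random variable, so this prescription indeed specifies a stochastic process $\mathbf{m} \colon \N_0 \times \Omega \to \R^{\ApproxAlgdim}$, and by construction $\mathbf{m}_0 = 0$.

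Next I would establish the recursion $\mathbf{m}_m = \alpha \mathbf{m}_{m-1} + (1-\alpha) G_m(\Theta_{m-1})$. For $m \in \N$ with $m \geq 2$ this follows by splitting off the top summand and factoring $\alpha$ out of the remaining terms, namely
\begin{equation*}
	\alpha \mathbf{m}_{m-1} + (1-\alpha) G_m(\Theta_{m-1})
	= \left[ \sum_{k=1}^{m-1} \alpha^{m-k}(1-\alpha)\, G_k(\Theta_{k-1}) \right] + (1-\alpha)\, G_m(\Theta_{m-1})
	= \mathbf{m}_m,
\end{equation*}
and for $m = 1$ it reduces to $\alpha \mathbf{m}_0 + (1-\alpha) G_1(\Theta_0) = (1-\alpha) G_1(\Theta_0) = \mathbf{m}_1$, using $\mathbf{m}_0 = 0$.

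Finally I would verify the parameter update $\Theta_m = \Theta_{m-1} - \gamma_m \mathbf{m}_m$. The key observation is that the assumed form of $\psi_m$, evaluated at the actual gradient history $(G_1(\Theta_0), G_2(\Theta_1), \ldots, G_m(\Theta_{m-1}))$ appearing in \cref{algo:general}, is precisely $\gamma_m \mathbf{m}_m$:
\begin{equation*}
	\psi_m\big(G_1(\Theta_0), G_2(\Theta_1), \ldots, G_m(\Theta_{m-1})\big)
	= \gamma_m \sum_{k=1}^m \alpha^{m-k}(1-\alpha)\, G_k(\Theta_{k-1})
	= \gamma_m \mathbf{m}_m.
\end{equation*}
Substituting this into the update rule $\Theta_m = \Theta_{m-1} - \psi_m(G_1(\Theta_0), \ldots, G_m(\Theta_{m-1}))$ from \cref{algo:general} yields the claim. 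I do not anticipate any genuine obstacle here: the entire argument is an algebraic identity between two equivalent descriptions of the same update, and the only point requiring care is the index bookkeeping in the geometric weights $\alpha^{m-k}$ together with the consistent matching of the arguments of $\psi_m$ with the gradient history prescribed by the framework.
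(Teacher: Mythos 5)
Your proof is correct: defining $\mathbf{m}_m = \sum_{k=1}^m \alpha^{m-k}(1-\alpha)\,G_k(\Theta_{k-1})$ with $\mathbf{m}_0 = 0$ and verifying the recursion and the identity $\psi_m(G_1(\Theta_0),\ldots,G_m(\Theta_{m-1})) = \gamma_m \mathbf{m}_m$ is exactly the routine verification the paper leaves implicit, as it states this lemma without proof. Note only that the codomain $\R^d$ in the lemma statement should read $\R^{\ApproxAlgdim}$, which you correctly used.
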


\subsubsection{Adaptive stochastic gradient descent (Adagrad)}
\label{sect:Adagrad}

\begin{lemma}
\label{Adagrad}
Assume \cref{algo:general},
let 
	$(\gamma_m)_{m \in \N} \subseteq (0,\infty)$, 
	$\varepsilon \in (0,\infty)$, 
and assume for all
	$m \in \N$,
	$i \in \{1, 2,\ldots,\ApproxAlgdim \}$,
	$
		g_1 = (g_1^{(1)},\ldots,g_1^{(\ApproxAlgdim)}), 		g_2 = (g_2^{(1)},\ldots,g_2^{(\ApproxAlgdim)}),
		\ldots, 
		g_m = (g_m^{(1)},\ldots,g_m^{(\ApproxAlgdim)})
	\in 
		\R^{\ApproxAlgdim}
	$
that
\begin{equation}
\begin{split} 
	\psi_m^{(i)}(g_1, g_2, \ldots, g_m)
=
	\left[\frac{\gamma_m}{
		\big(\varepsilon + \sum_{k = 1}^{m} |g_k^{(i)}|^2\big)^{\nicefrac{1}{2}}
	}\right]
	g_m^{(i)}.
\end{split}
\end{equation}
Then
it holds for all $m \in \N$, $i \in \{1,2,\ldots,\ApproxAlgdim\}$ that 
\begin{equation}
	\Theta_m^{(i)}
= 
	\Theta_{m-1}^{(i)} - 
	\left[
	\frac{\gamma_m}{\big(\varepsilon +
	\sum_{k = 1}^m |G_k^{(i)}(\Theta_{k-1})|^2
	 \big)^{\nicefrac{1}{2}}} 
	 \right]G_m^{(i)}(\Theta_{m-1}).
\end{equation}
\end{lemma}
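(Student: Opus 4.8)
The plan is to verify the claimed recursion by evaluating the $i$-th component of the abstract SGD-type update in \cref{algo:general} at the particular map $\psi_m$ prescribed in the hypothesis; no analytic estimates are needed, only a careful component-wise substitution. First I would recall that \cref{algo:general} stipulates for all $m \in \N$ that
\begin{equation*}
	\Theta_m = \Theta_{m-1} - \psi_m(G_1(\Theta_0), G_2(\Theta_1), \dots, G_m(\Theta_{m-1})),
\end{equation*}
and that both $\psi_m = (\psi_m^{(1)}, \ldots, \psi_m^{(\ApproxAlgdim)})$ and $\Theta = (\Theta^{(1)}, \ldots, \Theta^{(\ApproxAlgdim)})$ are already decomposed into their coordinates. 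Reading off the $i$-th entry of the displayed vector identity therefore yields for all $m \in \N$, $i \in \{1, 2, \ldots, \ApproxAlgdim\}$ that
\begin{equation*}
	\Theta_m^{(i)} = \Theta_{m-1}^{(i)} - \psi_m^{(i)}(G_1(\Theta_0), G_2(\Theta_1), \dots, G_m(\Theta_{m-1})).
\end{equation*}

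Next I would insert the arguments $g_k = G_k(\Theta_{k-1})$ for $k \in \{1, 2, \ldots, m\}$ into the assumed formula for $\psi_m^{(i)}$. Since each vector argument satisfies $g_k = (g_k^{(1)}, \ldots, g_k^{(\ApproxAlgdim)}) = (G_k^{(1)}(\Theta_{k-1}), \ldots, G_k^{(\ApproxAlgdim)}(\Theta_{k-1}))$, we have $g_k^{(i)} = G_k^{(i)}(\Theta_{k-1})$, and hence the hypothesis gives
\begin{equation*}
	\psi_m^{(i)}(G_1(\Theta_0), \dots, G_m(\Theta_{m-1})) = \left[\frac{\gamma_m}{\big(\varepsilon + \sum_{k=1}^m |G_k^{(i)}(\Theta_{k-1})|^2\big)^{\nicefrac{1}{2}}}\right] G_m^{(i)}(\Theta_{m-1}).
\end{equation*}
Substituting this expression into the previous coordinate-wise update then produces exactly the asserted identity, which completes the argument.

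The only point requiring any care — and thus the closest thing to an obstacle — is keeping the index bookkeeping straight: one must ensure that the outer index $i$, which labels the coordinate of the parameter vector (and hence the output slot of $\psi_m$), is not conflated with the inner index $k$, which labels the time step in the gradient history $G_1(\Theta_0), \ldots, G_m(\Theta_{m-1})$, and that the scalar entries $g_k^{(i)}$ are extracted consistently from each vector argument $g_k$. Once this matching is made explicit, the statement is an immediate consequence of the definition of the update in \cref{algo:general} together with the prescribed form of $\psi_m$, with no genuine computational content remaining.
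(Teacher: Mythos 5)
Your proof is correct and is exactly the intended argument: the paper states \cref{Adagrad} without proof, treating it as an immediate consequence of reading off the $i$-th coordinate of the update $\Theta_m = \Theta_{m-1} - \psi_m(G_1(\Theta_0), \ldots, G_m(\Theta_{m-1}))$ from \cref{algo:general} and substituting $g_k = G_k(\Theta_{k-1})$ into the assumed formula for $\psi_m^{(i)}$, which is precisely what you do. Your closing remark about not conflating the coordinate index $i$ with the time index $k$ correctly identifies the only bookkeeping issue, and there is no gap.
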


\subsubsection{Root mean square error propagation stochastic gradient descent (RMSprop)}
\label{sect:RMSprop}

\begin{lemma}
\label{RMSprop}
Assume \cref{algo:general},
let 
	$(\gamma_m)_{m \in \N} \subseteq (0,\infty)$, 
	$\varepsilon \in (0,\infty)$, 
	$\beta \in (0,1)$,
and assume for all
	$m \in \N$,
	$i \in \{1, 2,\ldots,\ApproxAlgdim \}$,
	$
		g_1 = (g_1^{(1)},\ldots,g_1^{(\ApproxAlgdim)}), 		g_2 = (g_2^{(1)},\ldots,g_2^{(\ApproxAlgdim)}),
		\ldots, 
		g_m = (g_m^{(1)},\ldots,g_m^{(\ApproxAlgdim)})
	\in 
		\R^{\ApproxAlgdim}
	$
that
\begin{equation}
\begin{split} 
	\psi_m^{(i)}(g_1, g_2, \ldots, g_m)
=
	\left[\frac{\gamma_m}{
		\big(\varepsilon + \sum_{k = 1}^{m} \beta^{m-k}(1-\beta)|g_k^{(i)}|^2\big)^{\nicefrac{1}{2}}
	}\right]
	g_m^{(i)}.
\end{split}
\end{equation}
Then
there exists  
$\mathbb{M} = (\mathbb{M}^{(1)},\ldots,\mathbb{M}^{(\ApproxAlgdim)}) \colon \N_0\times \Omega \to \R^\ApproxAlgdim$ 
such that for all $m \in \N$, $i \in \{1,2,\ldots,\ApproxAlgdim\}$ it holds that 
\begin{equation}
	\mathbb{M}_0 = 0, 
\qquad
	\mathbb{M}_m^{(i)} 
= 
	\beta\mathbb{M}_{m-1}^{(i)} + (1-\beta)|G_m^{(i)}(\Theta_{m-1})|^2,
\end{equation}
\begin{equation}
\andq
	\Theta_m^{(i)}
= 
	\Theta_{m-1}^{(i)} 
	- 
	\left[ 
		\frac{\gamma_m}{(\varepsilon +\mathbb{M}_m^{(i)} )^{\nicefrac{1}{2}}} 
	\right] 
	G_m^{(i)}(\Theta_{m-1}).
\end{equation}
\end{lemma}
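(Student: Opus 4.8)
The plan is to produce the process $\mathbb{M}$ via an explicit formula and then to verify the two asserted recursions by direct computation, since everything claimed is a purely algebraic reformulation of the coordinate-wise update rule in \cref{algo:general} under the given choice of $\psi_m^{(i)}$.

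First I would define, for every $i \in \{1, 2, \ldots, \ApproxAlgdim\}$, a stochastic process $\mathbb{M}^{(i)} \colon \N_0 \times \Omega \to \R$ by setting $\mathbb{M}_0^{(i)} = 0$ and, for $m \in \N$,
\begin{equation}
	\mathbb{M}_m^{(i)}
=
	\sum_{k=1}^m \beta^{m-k}(1-\beta)\,|G_k^{(i)}(\Theta_{k-1})|^2,
\end{equation}
and then put $\mathbb{M} = (\mathbb{M}^{(1)}, \ldots, \mathbb{M}^{(\ApproxAlgdim)})$; this is well defined because $\Theta$ and the gradients $G_k$ are already furnished by \cref{algo:general}. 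The exponential-moving-average recursion would then follow by isolating the $k = m$ term of this finite sum and pulling a factor $\beta$ out of the remaining terms, giving $\mathbb{M}_m^{(i)} = \beta \mathbb{M}_{m-1}^{(i)} + (1-\beta)|G_m^{(i)}(\Theta_{m-1})|^2$ for $m \geq 2$, with the case $m = 1$ handled directly using $\beta \mathbb{M}_0^{(i)} = 0$.

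Next I would translate the abstract update of \cref{algo:general}, namely $\Theta_m = \Theta_{m-1} - \psi_m(G_1(\Theta_0), G_2(\Theta_1), \ldots, G_m(\Theta_{m-1}))$, into coordinates. Reading off the $i$-th component and substituting $g_k = G_k(\Theta_{k-1})$ (so that $g_k^{(i)} = G_k^{(i)}(\Theta_{k-1})$ and in particular $g_m^{(i)} = G_m^{(i)}(\Theta_{m-1})$) into the hypothesis defining $\psi_m^{(i)}$ produces
\begin{equation}
	\Theta_m^{(i)}
=
	\Theta_{m-1}^{(i)}
	-
	\left[ \frac{\gamma_m}{\big(\varepsilon + \sum_{k=1}^m \beta^{m-k}(1-\beta)|G_k^{(i)}(\Theta_{k-1})|^2 \big)^{\nicefrac{1}{2}}} \right] G_m^{(i)}(\Theta_{m-1}),
\end{equation}
and recognizing the sum under the square root as $\mathbb{M}_m^{(i)}$ gives exactly the claimed update.

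I do not expect a genuine obstacle here, as the content is bookkeeping rather than analysis. The one place that needs care is keeping the two index conventions consistent, namely that inside $\psi_m$ the $k$-th gradient is evaluated at $\Theta_{k-1}$ while the outer multiplicative factor $g_m^{(i)}$ is evaluated at $\Theta_{m-1}$, together with treating the base case $m = 1$ separately so that the empty-sum convention $\mathbb{M}_0^{(i)} = 0$ is consistent with the stated recursion.
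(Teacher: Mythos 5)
Your proposal is correct, and there is nothing to compare it against: the paper states \cref{RMSprop} (like the other lemmas of \cref{general:SGDtype}) without proof, treating it as a routine algebraic verification. Your argument — defining $\mathbb{M}_m^{(i)} = \sum_{k=1}^m \beta^{m-k}(1-\beta)\,|G_k^{(i)}(\Theta_{k-1})|^2$, extracting the $k=m$ term to obtain the exponential-moving-average recursion (with the empty sum covering $m=1$), and substituting $g_k = G_k(\Theta_{k-1})$ into the coordinate-wise update of \cref{algo:general} — is exactly the intended verification, with the index convention handled correctly.
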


\subsubsection{Adadelta stochastic gradient descent (Adadelta)}
\label{sect:Adadelta}

\begin{lemma}
\label{Adadelta}
Assume \cref{algo:general},
let 
	$\varepsilon \in (0,\infty)$, 
	$\beta, \delta \in (0,1)$,
and assume for all
	$m \in \N$,
	$i \in \{1, 2,\ldots,\ApproxAlgdim \}$,
	$
		g_1 = (g_1^{(1)},\ldots,g_1^{(\ApproxAlgdim)}), 		g_2 = (g_2^{(1)},\ldots,g_2^{(\ApproxAlgdim)}),
		\ldots, 
		g_m = (g_m^{(1)},\ldots,g_m^{(\ApproxAlgdim)})
	\in 
		\R^{\ApproxAlgdim}
	$
that
\begin{equation}
\begin{split} 
	\psi_m^{(i)}(g_1, g_2, \ldots, g_m)
=
	\left[
		\frac{
			\varepsilon + \sum_{k = 1}^{m-1} \delta^{m-1-k}(1-\delta) |\psi_k^{(i)}(g_1, g_2, \ldots, g_k)|^2
		}{
			\varepsilon + \sum_{k = 1}^{m} \beta^{m-k}(1-\beta) |g_k^{(i)}|^2
		}
	\right]^{\nicefrac{1}{2}}
	g_m^{(i)}.
\end{split}
\end{equation}
Then
there exist 
	$\mathbb{M} = (\mathbb{M}^{(1)},\ldots,\mathbb{M}^{(d)})$
	and
	$\Delta = (\Delta^{(1)}, \ldots, \Delta^{(d)}) \colon \N_0\times \Omega \to \R^d$ 
	such that for all 
	$m \in \N$, 
	$i \in \{1,2,\ldots,d\}$ 
it holds that 
\begin{equation}
\mathbb{M}_0 = 0, \qquad \Delta_0 = 0, 
\qquad
	\mathbb{M}_m^{(i)}
= 
	\beta \,\mathbb{M}_{m-1}^{(i)} + (1-\beta)|G_m^{(i)}(\Theta_{m-1})|^2,
\end{equation}
\begin{equation}
	\Theta_m^{(i)} 
= 
	\Theta_{m-1}^{(i)} - \bigg[\frac{\varepsilon +\Delta_{m-1}^{(i)} }{\varepsilon +\mathbb{M}_m^{(i)} }\bigg]^{\nicefrac{1}{2}} 
	G_m^{(i)}(\Theta_{m-1}), 
\qandq 
	\Delta_m^{(i)} = \delta\Delta_{m-1}^{(i)} + (1-\delta)|\Theta_m^{(i)}-\Theta_{m-1}^{(i)}|^2. 
\end{equation}

\end{lemma}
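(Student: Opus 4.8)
The plan is to take the processes $\Theta$ and $(G_m)_{m \in \N}$ supplied by \cref{algo:general} as given and to \emph{define} the two auxiliary processes $\mathbb{M}, \Delta \colon \N_0 \times \Omega \to \R^{\ApproxAlgdim}$ directly through the recursions asserted in the statement, namely $\mathbb{M}_0 = \Delta_0 = 0$, $\mathbb{M}_m^{(i)} = \beta\mathbb{M}_{m-1}^{(i)} + (1-\beta)|G_m^{(i)}(\Theta_{m-1})|^2$, and $\Delta_m^{(i)} = \delta\Delta_{m-1}^{(i)} + (1-\delta)|\Theta_m^{(i)} - \Theta_{m-1}^{(i)}|^2$. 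These recursions are well posed, since each $\mathbb{M}_m$ and $\Delta_m$ is an explicit function of $\Theta_0, \ldots, \Theta_m$, so existence is immediate. The actual content is therefore only to verify that the $\Theta$ produced by \cref{algo:general} with the Adadelta choice of $\psi_m$ obeys the claimed update, i.e.\ the displayed identity for $\Theta_m^{(i)}$.

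First I would unfold the two defining recursions into closed forms by an elementary induction on $m$, obtaining $\mathbb{M}_m^{(i)} = \sum_{k=1}^m \beta^{m-k}(1-\beta)|G_k^{(i)}(\Theta_{k-1})|^2$ and $\Delta_m^{(i)} = \sum_{k=1}^m \delta^{m-k}(1-\delta)|\Theta_k^{(i)} - \Theta_{k-1}^{(i)}|^2$. In particular $\varepsilon + \mathbb{M}_m^{(i)}$ coincides exactly with the denominator appearing in the Adadelta definition of $\psi_m^{(i)}$ evaluated at the gradient sequence $g_k = G_k(\Theta_{k-1})$, while $\varepsilon + \Delta_{m-1}^{(i)}$ has the same shape as the numerator up to replacing $|\Theta_k^{(i)} - \Theta_{k-1}^{(i)}|^2$ by $|\psi_k^{(i)}(g_1, \ldots, g_k)|^2$.

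The key observation that closes this gap is that the update in \cref{algo:general} identifies, for every $k \in \N$ and every component $i$, the increment of $\Theta$ with a value of $\psi_k$ along the trajectory: since $\Theta_k = \Theta_{k-1} - \psi_k(G_1(\Theta_0), \ldots, G_k(\Theta_{k-1}))$, we get $\Theta_{k-1}^{(i)} - \Theta_k^{(i)} = \psi_k^{(i)}(G_1(\Theta_0), \ldots, G_k(\Theta_{k-1}))$, whence $|\Theta_k^{(i)} - \Theta_{k-1}^{(i)}|^2 = |\psi_k^{(i)}(G_1(\Theta_0), \ldots, G_k(\Theta_{k-1}))|^2$. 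Note here that the first $k$ entries of the evaluation point $(G_1(\Theta_0), \ldots, G_m(\Theta_{m-1}))$ are precisely $(G_1(\Theta_0), \ldots, G_k(\Theta_{k-1}))$, so the inner $\psi_k^{(i)}$ occurring in the numerator of $\psi_m^{(i)}$ is exactly this trajectory value. Substituting this identity into the closed form for the numerator turns $\varepsilon + \sum_{k=1}^{m-1}\delta^{m-1-k}(1-\delta)|\psi_k^{(i)}(\cdot)|^2$ into $\varepsilon + \Delta_{m-1}^{(i)}$.

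Combining the two pieces, evaluating the Adadelta formula at $g_k = G_k(\Theta_{k-1})$ yields
\[\psi_m^{(i)}(G_1(\Theta_0), \ldots, G_m(\Theta_{m-1})) = \left[\frac{\varepsilon + \Delta_{m-1}^{(i)}}{\varepsilon + \mathbb{M}_m^{(i)}}\right]^{\nicefrac{1}{2}} G_m^{(i)}(\Theta_{m-1}),\]
and inserting this into $\Theta_m^{(i)} = \Theta_{m-1}^{(i)} - \psi_m^{(i)}(G_1(\Theta_0), \ldots, G_m(\Theta_{m-1}))$ from \cref{algo:general} gives the asserted update for $\Theta_m^{(i)}$. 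The main obstacle is purely one of bookkeeping: the Adadelta map $\psi_m$ is self-referential, as its numerator contains the earlier maps $\psi_1, \ldots, \psi_{m-1}$, so one must carefully distinguish $\psi_k$ as a function of an arbitrary sequence from $\psi_k$ evaluated along the actual SGD trajectory. The recursion of \cref{algo:general} is exactly what collapses these two descriptions, and once this identification is made the remaining computation is immediate.
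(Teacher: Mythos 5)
Your proof is correct. The paper in fact states \cref{Adadelta} without supplying a proof, and your argument --- defining $\mathbb{M}$ and $\Delta$ by the asserted recursions, unfolding them to the closed forms $\mathbb{M}_m^{(i)} = \sum_{k=1}^m \beta^{m-k}(1-\beta)|G_k^{(i)}(\Theta_{k-1})|^2$ and $\Delta_{m-1}^{(i)} = \sum_{k=1}^{m-1}\delta^{m-1-k}(1-\delta)|\Theta_k^{(i)}-\Theta_{k-1}^{(i)}|^2$, and using the recursion of \cref{algo:general} to identify the increment $\Theta_{k-1}^{(i)}-\Theta_k^{(i)}$ with $\psi_k^{(i)}(G_1(\Theta_0),\ldots,G_k(\Theta_{k-1}))$, which resolves the self-referential numerator --- is exactly the intended verification, with the key bookkeeping point (that the inner $\psi_k^{(i)}$ in the Adadelta formula, evaluated along the trajectory, coincides with the actual parameter increment) correctly isolated and handled.
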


\subsubsection{Adamax stochastic gradient descent (Adamax)}
\label{sect:Adamax}

\begin{lemma}
Assume \cref{algo:general},
let 
	$\varepsilon \in (0,\infty)$, 
	$\alpha,\beta \in (0,1)$,
and assume for all
	$m \in \N$,
	$i \in \{1, 2,\ldots,\ApproxAlgdim \}$,
	$
		g_1 = (g_1^{(1)},\ldots,g_1^{(\ApproxAlgdim)}), 		g_2 = (g_2^{(1)},\ldots,g_2^{(\ApproxAlgdim)}),
		\ldots, 
		g_m = (g_m^{(1)},\ldots,g_m^{(\ApproxAlgdim)})
	\in 
		\R^{\ApproxAlgdim}
	$
that
\begin{equation}
\begin{split} 
	\psi_m^{(i)}(g_1, g_2, \ldots, g_m)
=
	\gamma_m
	\left[
		\frac{
			\sum_{k = 1}^m \alpha^{m-k}(1-\alpha) g_k^{(i)}
		}{
			1-\alpha^m
		}
	\right]
	\left[
		\varepsilon +
		\max\big\{|g_m^{(i)}|, \beta |g_{m-1}^{(i)}|, \ldots, \beta^{m-1} |g_1^{(i)}| \big\}
	\right]^{-1}. 
\end{split}
\end{equation}
Then
there exist 
	$\mathbf{m}  = (\mathbf{m}^{(1)},\ldots,\mathbf{m}^{(d)})$ and 
	$\mathbb{M} = (\mathbb{M}^{(1)}, \ldots, \mathbb{M}^{(d)})  \colon \N_0\times \Omega \to \R^d$ 
such that for all 
	$m \in \N$, 
	$i \in \{1,2,\ldots,d\}$ 
it holds that 
\begin{equation}
	\mathbf{m}_0 = 0, 
\qquad
	\mathbf{m}_m
= 
	\alpha\mathbf{m}_{m-1} + (1-\alpha) G_m(\Theta_{m-1}),
\end{equation}
\begin{equation} 
	\mathbb{M}_0 = 0, 
\qquad
	\mathbb{M}_m^{(i)} = \max\{ \beta\,\mathbb{M}_{m-1}^{(i)}, |G_m^{(i)}(\Theta_{m-1})|^2\},
\end{equation}
\begin{equation}
\andq 
	\Theta_m^{(i)}
= 
	\Theta_{m-1}^{(i)} 
	-  
	\gamma_m
	\left[
		\tfrac{\mathbf{m}_m^{(i)}}{1-\alpha^m}
	\right]
	\left[
		\varepsilon + \mathbb{M}_m^{(i)}
	\right]^{-1}
	.
\end{equation}
\end{lemma}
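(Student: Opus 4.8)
The plan is to exhibit the two auxiliary processes explicitly and then to check that the componentwise update produced by the general framework in \cref{algo:general} with the prescribed $\psi_m$ collapses to the asserted recursion. First I would \emph{define} $\mathbf{m} \colon \N_0 \times \Omega \to \R^d$ and $\mathbb{M} \colon \N_0 \times \Omega \to \R^d$ by setting $\mathbf{m}_0 = 0$, $\mathbb{M}_0 = 0$ and, for all $m \in \N$ and all $i \in \{1, 2, \ldots, d\}$, $\mathbf{m}_m = \alpha \mathbf{m}_{m-1} + (1-\alpha) G_m(\Theta_{m-1})$ and $\mathbb{M}_m^{(i)} = \max\{\beta \mathbb{M}_{m-1}^{(i)}, |G_m^{(i)}(\Theta_{m-1})|^2\}$. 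These recursions determine $\mathbf{m}$ and $\mathbb{M}$ uniquely by induction on $m$, so the existence assertion together with the first two displayed recursions of the lemma holds by construction; only the closed form of the $\Theta$-update remains.

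The second step is to unroll both processes into closed form by induction on $m$. For the momentum term this gives the geometric-sum identity $\mathbf{m}_m^{(i)} = \sum_{k=1}^m \alpha^{m-k}(1-\alpha) G_k^{(i)}(\Theta_{k-1})$, exactly as in the bookkeeping already carried out in \cref{Momentum}. For the entrywise second-moment term, unrolling the $\max$-recursion and using that multiplication by the nonnegative scalar $\beta$ commutes with $\max$ yields $\mathbb{M}_m^{(i)} = \max\{\beta^{m-k}|G_k^{(i)}(\Theta_{k-1})|^2 \colon k \in \{1, 2, \ldots, m\}\}$.

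The third step is to substitute into the master recursion $\Theta_m = \Theta_{m-1} - \psi_m(G_1(\Theta_0), G_2(\Theta_1), \dots, G_m(\Theta_{m-1}))$ provided by \cref{algo:general}. Reading this componentwise and inserting the prescribed $\psi_m^{(i)}$ with $g_k = G_k(\Theta_{k-1})$, the numerator is immediately recognized as $\mathbf{m}_m^{(i)}$ and the bracketed maximum in the denominator as $\varepsilon + \mathbb{M}_m^{(i)}$ via the closed forms from the second step, so that the framework update becomes $\Theta_m^{(i)} = \Theta_{m-1}^{(i)} - \gamma_m[\mathbf{m}_m^{(i)}/(1-\alpha^m)][\varepsilon + \mathbb{M}_m^{(i)}]^{-1}$, which is precisely the final assertion.

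The one genuinely delicate point is the denominator identification: one must match the running maximum produced by the $\mathbb{M}$-recursion, term by term, with the weighted maximum $\max\{|g_m^{(i)}|, \beta|g_{m-1}^{(i)}|, \ldots, \beta^{m-1}|g_1^{(i)}|\}$ that appears inside $\psi_m^{(i)}$, keeping careful track both of the exponents of $\beta$ and of whether the maximum is taken over the gradient entries or over their squares. All the remaining steps are routine inductions of the same type as those underlying \cref{Momentum} and \cref{RMSprop}.
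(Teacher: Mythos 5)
Your overall strategy --- define $\mathbf{m}$ and $\mathbb{M}$ by the stated recursions, unroll both by induction, and substitute $g_k = G_k(\Theta_{k-1})$ into the master update $\Theta_m = \Theta_{m-1} - \psi_m(G_1(\Theta_0), \ldots, G_m(\Theta_{m-1}))$ of \cref{algo:general} --- is the right one, and it is the same bookkeeping pattern that underlies the neighbouring statements \cref{Momentum} and \cref{RMSprop}, for which the paper likewise records no explicit proof. Your unrollings are also correct as computations: $\mathbf{m}_m^{(i)} = \sum_{k=1}^m \alpha^{m-k}(1-\alpha)\,G_k^{(i)}(\Theta_{k-1})$ and $\mathbb{M}_m^{(i)} = \max_{1 \le k \le m} \beta^{m-k}\,|G_k^{(i)}(\Theta_{k-1})|^2$.

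However, your third step contains a genuine gap, and it is precisely the point you yourself flagged as ``delicate'' and then waved through. The maximum appearing in the hypothesis on $\psi_m^{(i)}$ is $\max\{|g_m^{(i)}|, \beta|g_{m-1}^{(i)}|, \ldots, \beta^{m-1}|g_1^{(i)}|\} = \max_{1\le k\le m}\beta^{m-k}|g_k^{(i)}|$, i.e., it is taken over \emph{first powers} of the gradient entries, whereas the unrolled $\mathbb{M}_m^{(i)}$ is a maximum over their \emph{squares}. These do not coincide in general, so the identification $\varepsilon + \max_{1\le k\le m}\beta^{m-k}|G_k^{(i)}(\Theta_{k-1})| = \varepsilon + \mathbb{M}_m^{(i)}$ that your substitution step relies on is false whenever some $|G_k^{(i)}(\Theta_{k-1})| \notin \{0,1\}$. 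Already at $m = 1$ with $\ApproxAlgdim = 1$ one has from the framework that $\Theta_1 = \Theta_0 - \gamma_1\, g_1\, [\varepsilon + |g_1|]^{-1}$ with $g_1 = G_1(\Theta_0)$, while the lemma's final display asserts $\Theta_1 = \Theta_0 - \gamma_1\, g_1\, [\varepsilon + |g_1|^2]^{-1}$; these disagree unless $|g_1| \in \{0,1\}$. In other words, the statement as printed is internally inconsistent (in the Adamax method of Kingma \& Ba the second-moment surrogate satisfies $u_m = \max\{\beta\, u_{m-1}, |g_m|\}$ \emph{without} a square, so the square in the stated $\mathbb{M}$-recursion is evidently a typo), and a correct write-up must say so: either replace $|G_m^{(i)}(\Theta_{m-1})|^2$ by $|G_m^{(i)}(\Theta_{m-1})|$ in the $\mathbb{M}$-recursion, or put $\beta^{m-k}|g_k^{(i)}|^2$ inside the maximum defining $\psi_m^{(i)}$; with either correction your three-step argument closes verbatim. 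As submitted, the proof asserts an identity that fails, so it does not establish the final display.
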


\subsubsection{Adaptive moment estimation stochastic gradient desent (Adam)}
\label{sect:Adam}

\begin{lemma}
Assume \cref{algo:general},
let 
	$\varepsilon \in (0,\infty)$, 
	$\alpha,\beta \in (0,1)$,
and assume for all
	$m \in \N$,
	$i \in \{1, 2,\ldots,\ApproxAlgdim \}$,
	$
		g_1 = (g_1^{(1)},\ldots,g_1^{(\ApproxAlgdim)}), 		g_2 = (g_2^{(1)},\ldots,g_2^{(\ApproxAlgdim)}),
		\ldots, 
		g_m = (g_m^{(1)},\ldots,g_m^{(\ApproxAlgdim)})
	\in 
		\R^{\ApproxAlgdim}
	$
that
\begin{equation}
\begin{split} 
	\psi_m^{(i)}(g_1, g_2, \ldots, g_m)
=
	\gamma_m
	\left[
		\frac{
			\sum_{k = 1}^m \alpha^{m-k}(1-\alpha) g_k^{(i)}
		}{
			1-\alpha^m
		}
	\right]
	\left[
		\varepsilon +\bigg[
			\frac{
				\sum_{k = 1}^m \beta^{m-k}(1-\beta) |g_k^{(i)}|^2
			}{
				1- \beta^m
			}
		\bigg]^{\nicefrac{1}{2}} 
	\right]^{-1}. 
\end{split}
\end{equation}
Then
there exist 
	$\mathbf{m}  = (\mathbf{m}^{(1)},\ldots,\mathbf{m}^{(\ApproxAlgdim)})\colon \N_0\times \Omega \to \R^\ApproxAlgdim$ and 
	$\mathbb{M} = (\mathbb{M}^{(1)}, \ldots, \mathbb{M}^{(\ApproxAlgdim)}) \colon \N_0\times \Omega \to \R^\ApproxAlgdim$ 
such that for all 
	$m \in \N$, 
	$i \in \{1,2,\ldots,\ApproxAlgdim\}$ 
it holds that 
\begin{equation}
	\mathbf{m}_0 = 0, 
\qquad
	\mathbf{m}_m
= 
	\alpha\mathbf{m}_{m-1} + (1-\alpha) G_m(\Theta_{m-1}),
\end{equation}
\begin{equation} 
	\mathbb{M}_0 = 0, 
\qquad
	\mathbb{M}_m^{(i)} = \beta\,\mathbb{M}_{m-1}^{(i)} + (1-\beta)  |G_m^{(i)}(\Theta_{m-1})|^2,
\end{equation}
\begin{equation}
\andq 
	\Theta_m^{(i)}
= 
	\Theta_{m-1}^{(i)} 
	-  
	\gamma_m
	\left[
		\tfrac{\mathbf{m}_m^{(i)}}{1-\alpha^m}
	\right]
	\left[
		\varepsilon +\big( \tfrac{\mathbb{M}_m^{(i)}}{1- \beta^m}\big)^{\!\!\nicefrac{1}{2}} 
	\right]^{-1}
	.
\end{equation}
\end{lemma}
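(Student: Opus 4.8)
The plan is to construct the two auxiliary processes explicitly and verify by induction that they produce the closed-form geometric sums appearing in $\psi_m$, after which the claimed $\Theta$-recursion follows by direct substitution into the update rule of \cref{algo:general}. Concretely, I would first \emph{define} $\mathbf{m} \colon \N_0 \times \Omega \to \R^\ApproxAlgdim$ and $\mathbb{M} = (\mathbb{M}^{(1)}, \ldots, \mathbb{M}^{(\ApproxAlgdim)}) \colon \N_0 \times \Omega \to \R^\ApproxAlgdim$ by $\mathbf{m}_0 = 0$, $\mathbb{M}_0 = 0$ and, for all $m \in \N$ and $i \in \{1, 2, \ldots, \ApproxAlgdim\}$,
\begin{equation*}
	\mathbf{m}_m = \alpha \mathbf{m}_{m-1} + (1-\alpha) G_m(\Theta_{m-1})
	\qandq
	\mathbb{M}_m^{(i)} = \beta \mathbb{M}_{m-1}^{(i)} + (1-\beta) |G_m^{(i)}(\Theta_{m-1})|^2.
\end{equation*}
These recursions are well posed since $\Theta$ exists by \cref{algo:general}, so the existence assertion of the lemma is immediate; the content is the final displayed $\Theta$-recursion.

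Next I would establish, by induction on $m \in \N_0$, the closed-form representations
\begin{equation*}
	\mathbf{m}_m = \sum_{k = 1}^m \alpha^{m-k}(1-\alpha) G_k(\Theta_{k-1})
	\qandq
	\mathbb{M}_m^{(i)} = \sum_{k = 1}^m \beta^{m-k}(1-\beta) |G_k^{(i)}(\Theta_{k-1})|^2.
\end{equation*}
The base case $m = 0$ holds since both empty sums vanish, matching $\mathbf{m}_0 = 0$ and $\mathbb{M}_0 = 0$. For the induction step one inserts the induction hypothesis into the defining recursion, factors the extra power of $\alpha$ (respectively $\beta$) into the summand, and peels off the new $k = m$ term; this is the routine geometric-sum bookkeeping.

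Finally I would substitute these closed forms into the update of \cref{algo:general}, namely $\Theta_m = \Theta_{m-1} - \psi_m(G_1(\Theta_0), G_2(\Theta_1), \ldots, G_m(\Theta_{m-1}))$. Reading off the assumed formula for $\psi_m^{(i)}$ with $g_k = G_k(\Theta_{k-1})$ (so that $g_k^{(i)} = G_k^{(i)}(\Theta_{k-1})$), the numerator $\sum_{k=1}^m \alpha^{m-k}(1-\alpha) g_k^{(i)}$ equals $\mathbf{m}_m^{(i)}$ and the inner sum $\sum_{k=1}^m \beta^{m-k}(1-\beta)|g_k^{(i)}|^2$ equals $\mathbb{M}_m^{(i)}$ by the step above, while the bias-correction denominators $1-\alpha^m$ and $1-\beta^m$ and the constant $\gamma_m$, $\varepsilon$ carry through verbatim since they do not depend on the $g_k$. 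Componentwise this yields exactly the claimed expression for $\Theta_m^{(i)}$, completing the proof.

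The proof presents no genuine difficulty; the only point demanding care is index and power bookkeeping in the induction, in particular keeping the bias-correction factors $(1-\alpha^m)^{-1}$ and $(1-\beta^m)^{-1}$ separate from the geometric weights so that they are not inadvertently absorbed into the recursions for $\mathbf{m}$ and $\mathbb{M}$ (which carry \emph{uncorrected} exponential averages).
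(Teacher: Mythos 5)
Your proof is correct. The paper states this lemma (and its companions in \cref{general:SGDtype}) without proof, treating the verification as routine, and your argument---defining $\mathbf{m}$ and $\mathbb{M}$ by the stated recursions, establishing the geometric closed forms by induction, and substituting into the update rule of \cref{algo:general} with $g_k = G_k(\Theta_{k-1})$---is exactly the intended verification, including the correct observation that the bias-correction factors $(1-\alpha^m)^{-1}$ and $(1-\beta^m)^{-1}$ stay outside the recursions.
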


\section{Numerical examples}
\label{sect:numerics}

\newcommand{\unclear}[1]{{\color{blue}#1}}

In this section we apply the LRV strategy to different parametric approximation problems from the literature.
Specifically,
	we consider the classical parametric Black-Scholes model for the pricing of European call options in \cref{simul:BS1},
	we consider a parametric Black-Scholes model for the pricing of worst-of basket put options on three underlying assets in \cref{simul:BSworstPut},
	we consider a parametric Black-Scholes model for the pricing of average basket put options on three underlying assets with knock-in barriers in \cref{simul:BSaverageBarrier}, and
	we consider a parametric stochastic Lorentz equation in \cref{simul:Lorentz}.
In the literature there are already a number of simulation results for SGD-based deep learning methods regarding the high-dimensional pricing of financial derivative contracts.
In particular 
	we refer to, e.g., 
	\cite{berner2020numerically,BeckJafaari21,Lokeshwar22,Ferguson2018,Germain2021,Biagini2021} 
	for parametric pricing results for European options,	
	we refer to, e.g.,  \cite{Andersson21,Salvador2020a,Becker2018,becker2019pricingPublished,Lind2022,Chen21,Lapeyre21,Gaspar20,Ye2019,Becker2019published} 
	for the pricing of American options, and 
	we refer, e.g., to \cite{ruf2020neural} for further references.

In \cref{sect:anti} we briefly recall the antithetic MC method (cf., e.g., Glasserman \cite{Glasserman03}) and some well-known properties of it. 
This is a variance reduction technique for MC methods which we will employ in some of the proposal algorithms for the LRV strategy in case of some of the above mentioned approximation problems.
In each of the considered numerical examples we also compare the LRV strategy with existing approximation techniques from the literature such as
	the deep learning method induced by Becker et al.\ \cite{BeckJafaari21}, 
	MC methods, and
	QMC methods.

All the simulations in this section were run in {\sc Python} using {\sc TensorFlow} 2.12 on remote machines from \texttt{https://vast.ai} equipped with a single
\textsc{NVIDIA GeForce RTX 4090} GPU with 24 GB Graphics RAM.
The {\sc Python} source codes which were employed to produce all the results in this section can be downloaded as part of the sources of the arXiv version of this article at \url{https://arxiv.org/e-print/2202.02717}.
Specifically 
the codes in the folder {\it 1\_BS1} were employed to produce all the results in \cref{simul:BS1},
the codes in the folder {\it 2\_BS\_eur\_put\_basket} were employed to produce all the results in \cref{simul:BSworstPut},
the codes in the folder {\it 3\_BS\_barrier\_put\_basket\_avg} were employed to produce all the results in \cref{simul:BSaverageBarrier}, and
the codes in the folder {\it 4\_Lorentz} were employed to produce all the results in \cref{simul:Lorentz}.

\subsection{Antithetic Monte Carlo approximations}
\label{sect:anti}

In this section we recall a special case of antithetic variates for MC methods (cf., e.g., Glasserman \cite[Section 4.2]{Glasserman03}) when the distribution of the sampled random variables is symmetric around the origin.
The following result, \cref{lem:anti1} below, shows that the resulting antithetic MC method achieves a higher or equal $L^2$-accuracy than the standard MC method when the same number of MC samples are used for both methods.
The subsequent result, \cref{lem:anti2} below, then provides a sufficient condition for the antithetic MC method to achieve a \emph{strictly} higher $L^2$-accuracy than the standard MC method when the same number of MC samples are used for both methods.

\begin{lemma}
\label{lem:anti1}
Let $\dimProblem, \NumMC \in \N$, 
let $ ( \Omega, \mathcal{F}, \P ) $ be a probability space, 
let $X_m \colon \Omega \to \R^\dimProblem$, $m \in \{1, 2, \ldots, \NumMC\}$, be i.i.d.\ random variables,
assume for all 
	$B \in \Borel(\R^\dimProblem)$ 
that $\P(X_1 \in B) = \P(-X_1 \in B)$,
let $f \colon \R^\dimProblem \to \R$ be measurable,  
assume
$\Exp{|f(X_1)|^2} < \infty$,
and let 
	$M \colon \Omega \to \R$
and
	$A \colon \Omega \to \R$
satisfy 
\begin{equation}
\label{anti1:ass1}
\begin{split} 
	M
=
	\left[ \frac{1}{\NumMC} \sum_{m = 1}^{\NumMC} f(X_m) \right]
\qandq
	A
=
	\frac{1}{2\NumMC} \left[ \sum_{m = 1}^{\NumMC} \bigl( f(X_m) + f(-X_m) \bigr) \right].
\end{split}
\end{equation}
Then
\begin{enumerate}[label=(\roman *)]
\item \label{anti1:item1}
it holds that 
$
	\Exp{|M-\Exp{f(X_1)}|^2}
=
	\frac{\Var{f(X_1)}}{\NumMC}
$
and

\item \label{anti1:item2}
it holds that 
$
	\Exp{|A-\Exp{f(X_1)}|^2}
=
	\frac{\Var{f(X_1)} + \Cov{f(X_1)}{f(-X_1)}}{2\NumMC}
\leq
	\frac{\Var{f(X_1)}}{\NumMC}
$.
\end{enumerate}
\end{lemma}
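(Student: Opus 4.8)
The plan is to reduce both identities to the elementary fact that the expected squared deviation of a sample mean of i.i.d.\ real-valued random variables from their common mean equals that common variance divided by the sample size. First I would abbreviate $\mu = \Exp{f(X_1)}$ and extract the two consequences of the symmetry hypothesis $\P(X_1 \in B) = \P(-X_1 \in B)$ for all $B \in \Borel(\R^\dimProblem)$: since $-X_1$ and $X_1$ share the same law, the random variables $f(-X_1)$ and $f(X_1)$ are identically distributed, so that $\Exp{f(-X_1)} = \mu$ and $\Var{f(-X_1)} = \Var{f(X_1)}$ (both finite by the assumption $\Exp{|f(X_1)|^2} < \infty$). These two facts are the only place where symmetry enters, and everything else is bookkeeping.

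For item \ref{anti1:item1} I would observe that $f(X_1), f(X_2), \ldots, f(X_\NumMC)$ are i.i.d.\ with mean $\mu$ and variance $\Var{f(X_1)}$, so that $\Exp{M} = \mu$ and, using independence,
\[
	\Exp{|M - \mu|^2} = \Var{M} = \frac{1}{\NumMC^2} \sum_{m=1}^{\NumMC} \Var{f(X_m)} = \frac{\Var{f(X_1)}}{\NumMC},
\]
which is the claimed identity.

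For item \ref{anti1:item2} I would write $A = \frac{1}{\NumMC} \sum_{m=1}^{\NumMC} Y_m$ with $Y_m = \tfrac{1}{2}\bigl(f(X_m) + f(-X_m)\bigr)$ and note that $Y_1, Y_2, \ldots, Y_\NumMC$ are i.i.d.\ (being the same deterministic function applied to the i.i.d.\ vectors $X_m$) with common mean $\tfrac{1}{2}(\mu + \mu) = \mu$. Exactly as in item (i) this gives $\Exp{|A - \mu|^2} = \NumMC^{-1}\Var{Y_1}$, and bilinearity of the covariance together with $\Var{f(-X_1)} = \Var{f(X_1)}$ yields
\[
	\Var{Y_1} = \tfrac{1}{4}\bigl[\Var{f(X_1)} + 2\Cov{f(X_1)}{f(-X_1)} + \Var{f(-X_1)}\bigr] = \tfrac{1}{2}\bigl[\Var{f(X_1)} + \Cov{f(X_1)}{f(-X_1)}\bigr],
\]
which establishes the asserted equality in item \ref{anti1:item2}.

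Finally, the inequality follows from the Cauchy--Schwarz inequality for covariances, which gives $\Cov{f(X_1)}{f(-X_1)} \leq \bigl(\Var{f(X_1)}\,\Var{f(-X_1)}\bigr)^{1/2} = \Var{f(X_1)}$, once more invoking the symmetry-induced equality of the two variances. Substituting this bound into the expression for $\Exp{|A - \mu|^2}$ produces the upper bound $\NumMC^{-1}\Var{f(X_1)}$, as required. I expect no genuine obstacle; the only point demanding care is the consistent use of the symmetry hypothesis to identify the laws, and hence the variances, of $f(X_1)$ and $f(-X_1)$, since this single fact underpins both the variance computation and the Cauchy--Schwarz step.
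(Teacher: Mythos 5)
Your proof is correct and follows essentially the same route as the paper: both compute the error of $M$ and $A$ as the variance of an average of i.i.d.\ (hence uncorrelated) terms, expand $\Var{f(X_1)+f(-X_1)}$ by bilinearity using the symmetry-induced identity $\Var{f(-X_1)}=\Var{f(X_1)}$, and obtain the inequality from the Cauchy--Schwarz bound on the covariance. Your packaging of the antithetic pairs as i.i.d.\ variables $Y_m$ is merely a tidier bookkeeping of the paper's direct Bienaym\'e computation, not a different argument.
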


\begin{proof}[Proof of \cref{lem:anti1}]
First, note that the fact that $X_m$, $m \in \{1, 2, \ldots, \NumMC\}$, are i.i.d.\ implies that for all
	$m,n \in \{1, 2, \ldots, \NumMC\}$
with $m \neq n$ it holds that
\begin{equation}
\label{anti1:eq1}
\begin{split} 
	\Cov{f(X_m)}{f(X_n)} = 0
\qandq
	\Cov{f(X_m) + f(-X_m)}{f(X_n) + f(-X_n)} = 0
\end{split}
\end{equation}
(cf., e.g., Klenke \cite[Theorem 5.4]{Klenke14}).
This, 
the fact that $\Exp{M} = \Exp{f(X_1)}$, and 
the Bienaym\'e formula (cf, e.g., Klenke \cite[Theorem 5.7]{Klenke14}) assure that
\begin{equation}
\label{anti1:eq2}
\begin{split} 
	\Exp{|M-\Exp{f(X_1)}|^2}
&=
	\Var{M}
=
	\frac{1}{\NumMC^2}
	\Var{ {\textstyle \sum_{m = 1}^{\NumMC} f(X_m)} } \\
&=
	\frac{1}{\NumMC^2}
	{\textstyle \left(
		\left[ 
			\sum_{m = 1}^{\NumMC} \Var{f(X_m)} 
		\right]
		+
			\sum_{m,n \in \{1, 2, \ldots, \NumMC\}, m \neq n}
				\Cov{f(X_m)}{f(X_n)}
	\right)}\\
&=
	\frac{1}{\NumMC^2} \,
	{\textstyle
			\sum_{m = 1}^{\NumMC} \Var{f(X_1)} 
	}
=
	\frac{\Var{f(X_1)}}{\NumMC}.
\end{split}
\end{equation}
This proves \cref{anti1:item1}. 
Next observe that the fact that $X_1,X_2, \ldots, X_\NumMC, -X_1, -X_2, \ldots, -X_\NumMC$ are identically distributed implies that 
\begin{equation}
\label{anti1:eq3}
\begin{split} 
	\Exp{A}
=
	\frac{1}{2\NumMC} \sum_{m = 1}^{\NumMC} \left(\Exp{f(X_m)} + \Exp{f(-X_m)}\right)
=
	\frac{1}{2\NumMC} \sum_{m = 1}^{\NumMC} 2\Exp{f(X_1)}
=
	\Exp{f(X_1)}.
\end{split}
\end{equation}
This,
\eqref{anti1:eq1},
the fact that $X_1,X_2, \ldots, X_\NumMC, -X_1, -X_2, \ldots, -X_\NumMC$ are identically distributed, and
the Bienaym\'e formula (cf, e.g., Klenke \cite[Theorem 5.7]{Klenke14})
assure
that 
\begin{equation}
\label{anti1:eq4}
\begin{split} 
	\Exp{|A-\Exp{f(X_1)}|^2}
&=
	\Var{A}
=
	\frac{1}{(2\NumMC)^2}
	\Var{ {\textstyle \sum_{m = 1}^{\NumMC} f(X_m) + f(-X_m)} } \\
&=
	\frac{1}{4\NumMC^2}
	{\textstyle \bigg(
		\left[ 
			\sum_{m = 1}^{\NumMC} \Var{f(X_m) + f(-X_m)} 
		\right]}\\
&\quad
		+
		{\textstyle
		\left[	
			\sum_{m,n \in \{1, 2, \ldots, \NumMC\}, m \neq n}
				\Cov{f(X_m) + f(-X_m)}{f(X_n) + f(-X_n)}
		\right]
	\bigg)}\\
&=
	\frac{1}{4\NumMC^2} \,
	{\textstyle 			\sum_{m = 1}^{\NumMC} \Var{f(X_1) + f(-X_1)} 
	} \\
&=
	\frac{\Var{f(X_1) + f(-X_1)} }{4\NumMC} \\
&=
	\frac{\Var{f(X_1)} + 2 \Cov{f(X_1)}{f(-X_1)} + \Var{f(-X_1)}}{4\NumMC} \\
&=
	\frac{\Var{f(X_1)} + \Cov{f(X_1)}{f(-X_1)}}{2\NumMC}.
\end{split}
\end{equation}
Moreover, note that 
the fact that $X_1$ and $-X_1$ are identically distributed and 
the Cauchy-Schwarz inequality (cf., e.g., Klenke \cite[Theorem 5.8]{Klenke14}) 
assure that
\begin{equation}
\label{anti1:eq5}
\begin{split} 
	(\Cov{f(X_1)}{f(-X_1)})^2
\leq
	\Var{f(X_1)}\Var{f(-X_1)}
=
	(\Var{f(X_1)})^2.
\end{split}
\end{equation}
Combining this with \eqref{anti1:eq4} demonstrates that 
\begin{equation}
\label{anti1:eq6}
\begin{split} 
	\Exp{|A-\Exp{f(X_1)}|^2}
&=
	\frac{\Var{f(X_1)} + \Cov{f(X_1)}{f(-X_1)}}{2\NumMC}\\
&\leq
	\frac{\Var{f(X_1)} + \Var{f(X_1)}}{2\NumMC}
=
	\frac{\Var{f(X_1)}}{\NumMC}.
\end{split}
\end{equation}
This establishes \cref{anti1:item2}.
The proof of \cref{lem:anti1} is thus complete.
\end{proof}

\begin{corollary}
\label{lem:anti2}
Let $\dimProblem, \NumMC \in \N$, 
let $ ( \Omega, \mathcal{F}, \P ) $ be a probability space, 
let $X_m \colon \Omega \to \R^\dimProblem$, $m \in \{1, 2, \ldots, \NumMC\}$, be i.i.d.\ random variables,
assume for all 
	$B \in \Borel(\R^\dimProblem)$ 
that $\P(X_1 \in B) = \P(-X_1 \in B)$,
let $f \colon \R^\dimProblem \to \R$ be measurable,  
assume
for all 
	$a,b,c \in \R$ 
that $\Exp{|f(X_1)|^2} < \infty$
and
$
	\P(af(X_1) + b f(-X_1) + c = 0) < 1
$,
and let 
	$M \colon \Omega \to \R$
and
	$A \colon \Omega \to \R$
satisfy 
\begin{equation}
\label{anti2:ass1}
\begin{split} 
	M
=
	\left[ \frac{1}{\NumMC} \sum_{m = 1}^{\NumMC} f(X_m) \right]
\qandq
	A
=
	\frac{1}{2\NumMC} \left[ \sum_{m = 1}^{\NumMC} \bigl( f(X_m) + f(-X_m) \bigr) \right].
\end{split}
\end{equation}
Then
\begin{enumerate}[label=(\roman *)]
\item \label{anti2:item1}
it holds that 
$
	\Exp{|M-\Exp{f(X_1)}|^2}
=
	\frac{\Var{f(X_1)}}{\NumMC}
$
and

\item \label{anti2:item2}
it holds that 
$
	\Exp{|A-\Exp{f(X_1)}|^2}
=
	\frac{\Var{f(X_1)} + \Cov{f(X_1)}{f(-X_1)}}{2\NumMC}
<
	\frac{\Var{f(X_1)}}{\NumMC}
$.
\end{enumerate}
\end{corollary}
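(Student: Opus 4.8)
The plan is to reduce everything to \cref{lem:anti1} and then isolate the single new ingredient, namely the upgrade from ``$\leq$'' to ``$<$''. Item~\ref{anti2:item1} is literally item~\ref{anti1:item1} of \cref{lem:anti1}, so I would simply invoke that result. Likewise, the identity $\Exp{|A-\Exp{f(X_1)}|^2} = \frac{\Var{f(X_1)} + \Cov{f(X_1)}{f(-X_1)}}{2\NumMC}$ asserted in item~\ref{anti2:item2} is exactly the identity established in item~\ref{anti1:item2} of \cref{lem:anti1}; none of the computations leading to it need to be redone. Hence the entire task boils down to showing the \emph{strict} inequality $\frac{\Var{f(X_1)} + \Cov{f(X_1)}{f(-X_1)}}{2\NumMC} < \frac{\Var{f(X_1)}}{\NumMC}$, which, after multiplying through by $2\NumMC > 0$, is equivalent to $\Cov{f(X_1)}{f(-X_1)} < \Var{f(X_1)}$.

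My preferred route to this strict inequality avoids any Cauchy--Schwarz equality analysis and instead works directly with the variance of the difference. Using the assumption that $X_1$ and $-X_1$ are identically distributed, I would first record that $\Var{f(-X_1)} = \Var{f(X_1)}$. Expanding the variance of $f(X_1) - f(-X_1)$ via bilinearity of the covariance then yields
\begin{equation*}
	\Var{f(X_1) - f(-X_1)}
=
	\Var{f(X_1)} + \Var{f(-X_1)} - 2\Cov{f(X_1)}{f(-X_1)}
=
	2\bigl(\Var{f(X_1)} - \Cov{f(X_1)}{f(-X_1)}\bigr).
\end{equation*}
Consequently, the desired strict inequality $\Cov{f(X_1)}{f(-X_1)} < \Var{f(X_1)}$ holds if and only if $\Var{f(X_1) - f(-X_1)} > 0$.

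It therefore remains to verify that $f(X_1) - f(-X_1)$ is not almost surely constant, and this is exactly where the additional hypothesis enters. If $\Var{f(X_1) - f(-X_1)} = 0$, then there would exist some $c' \in \R$ with $\P(f(X_1) - f(-X_1) = c') = 1$, that is, $\P(a f(X_1) + b f(-X_1) + c = 0) = 1$ for the choice $a = 1$, $b = -1$, $c = -c'$, contradicting the standing non-degeneracy assumption. Hence $\Var{f(X_1) - f(-X_1)} > 0$, which gives $\Cov{f(X_1)}{f(-X_1)} < \Var{f(X_1)}$ and establishes item~\ref{anti2:item2}. I expect the only genuinely delicate point to be the bookkeeping in this last step: one must translate ``degenerate variance'' into precisely the affine relation excluded by the hypothesis and confirm that the relevant coefficients $(a,b) = (1,-1)$ are nonzero, so that the assumption is actually applicable; everything else is a direct appeal to \cref{lem:anti1} together with elementary variance identities.
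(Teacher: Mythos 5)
Your proposal is correct, but it reaches the strict inequality by a genuinely different route than the paper. The paper's proof handles item~(i) and the identity in item~(ii) exactly as you do, by citing \cref{lem:anti1}; for the strict part, however, it invokes the \emph{strict} Cauchy--Schwarz inequality (citing Klenke) to obtain $(\Cov{f(X_1)}{f(-X_1)})^2 < \Var{f(X_1)}\,\Var{f(-X_1)} = (\Var{f(X_1)})^2$, from which $\Cov{f(X_1)}{f(-X_1)} < \Var{f(X_1)}$ follows, with the non-degeneracy hypothesis serving as the condition ruling out the a.s.\ affine dependence that characterizes equality in Cauchy--Schwarz. Your argument instead expands $\Var{f(X_1) - f(-X_1)} = 2\bigl(\Var{f(X_1)} - \Cov{f(X_1)}{f(-X_1)}\bigr)$ and shows this variance is strictly positive, since otherwise $f(X_1) - f(-X_1)$ would be a.s.\ constant, contradicting the hypothesis at the single instance $(a,b,c) = (1,-1,-c')$. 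This buys two things: it is more elementary (no appeal to the equality analysis in Cauchy--Schwarz, only the bilinearity of covariance), and it makes explicit that the full quantification over all $(a,b,c)$ is not needed — only the non-degeneracy of the one affine combination $f(X_1) - f(-X_1)$ is ever used, whereas the paper's route in principle leans on the whole family of excluded relations to license the strict inequality. (The paper's version yields the slightly stronger two-sided bound $|\Cov{f(X_1)}{f(-X_1)}| < \Var{f(X_1)}$, but only the one-sided bound is needed for the conclusion.) Your closing caveat about checking $(a,b) = (1,-1) \neq (0,0)$ is sensible as a reading of the hypothesis's intent; as literally stated the hypothesis quantifies over all $a,b,c \in \R$ including zero, so your instance is certainly covered.
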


\begin{proof}[Proof of \cref{lem:anti2}]
First, note \cref{anti1:item1} in \cref{lem:anti1} establishes \cref{anti2:item1}.
Moreover, observe that 
the assumption that 
for all 
	$a,b,c \in \R$ 
it holds that
$
	\P(af(X_1) + b f(-X_1) + c = 0) < 1
$,
the fact that $X_1$ and $-X_1$ are identically distributed, and 
the strict Cauchy-Schwarz inequality (cf., e.g., Klenke \cite[Theorem 5.8]{Klenke14}) 
assure that
\begin{equation}
\label{anti2:eq5}
\begin{split} 
	(\Cov{f(X_1)}{f(-X_1)})^2
<
	\Var{f(X_1)}\Var{f(-X_1)}
=
	(\Var{f(X_1)})^2.
\end{split}
\end{equation}
Combining this with \cref{anti1:item2} in \cref{lem:anti1} demonstrates that 
\begin{equation}
\label{anti2:eq6}
\begin{split} 
	\Exp{|A-\Exp{f(X_1)}|^2}
&=
	\frac{\Var{f(X_1)} + \Cov{f(X_1)}{f(-X_1)}}{2\NumMC}\\
&<
	\frac{\Var{f(X_1)} + \Var{f(X_1)}}{2\NumMC}
=
	\frac{\Var{f(X_1)}}{\NumMC}.
\end{split}
\end{equation}
This establishes \cref{anti2:item2}.
The proof of \cref{lem:anti2} is thus complete.
\end{proof}

\subsection[Parametric Black-Scholes equations for European call options]{Parametric Black-Scholes partial differential equations for European call options}
\label{simul:BS1}

\newcommand{\numMCsamplesBS}{8\,192\,000}
\newcommand{\nrtrainstepsBS}{140000}
\newcommand{\numRefAlgSamplesBS}{??}

In this section we apply the LRV strategy to the problem of approximating the fair price of an European call option in the classical Black-Scholes model (cf.\ Black \& Scholes \cite{BlackScholes73} and Merton \cite{Merton73}).
A brief summary of the numerical results of this subsection can be found in \cref{table:BS1_Summary} in the introduction.
We start by introducing the Black-Scholes model in the context of \cref{algo:general}.

Assume \cref{algo:general},
assume
	$\dimParam = 5$
and
\begin{equation}
\begin{split} 
	\Param = {[90, 110] \times [0.01,1] \times [-0.1,0.1] \times [0.01,0.5] \times  [90, 110]},
\end{split}
\end{equation}
let $\normalcdf \colon \R\to \R$ satisfy for all $z\in \R$ that
$
  \normalcdf(z) =\tfrac{1}{\sqrt{2\pi}} \int_{-\infty}^z \exp({-\frac{y^2}{2}}) \d y
$,
and let $u \colon \Param \to \R$ satisfy for all
	$p = (\BSinitprice, T, \BSrate, \sigma, K) \in \Param$
that
\begin{equation}
\label{BS1:eq0}
\begin{split} 
	u(p)
=
	\BSinitprice \,
	\normalcdf \Big(
		\tfrac{ (\BSrate + \frac{ \sigma^2 }{2}) T + \ln( {\BSinitprice}/{K} )}{ \sigma \sqrt{T} }
	\Big)
	-
	\exp({-\BSrate T})K
	\,
	\normalcdf \Bigl(
		\tfrac{ (\BSrate - \frac{ \sigma^2 }{2}) T + \ln( {\BSinitprice}/{K} )}{ \sigma \sqrt{T} }
	\Bigr).
\end{split}
\end{equation}
Note that \cref{BS1:eq0} corresponds to the famous Black-Scholes formula for European call options.
In the economic interpretation of the Black-Scholes model, for every 
	$p = (\BSinitprice, T, \BSrate, \sigma, K) \in \Param$
the number 
	$u(p) \in \R$
thus corresponds to the fair price of a European call option with
	initial price $\BSinitprice$,	
	time of maturity $T$,
	drift rate $\BSrate$,
	volatility $\sigma$,
	and strike price $K$.

We now specify the mathematical objects in the LRV strategy appearing in \cref{algo:general} 
to approximately calculate the target function $u \colon \Param \to \R$ in \cref{BS1:eq0}.
Specifically, in addition to the assumptions above,
let 
	$\NumMC, \NumRefMC \in \N$,
	$\antithetic, \exact \in \{0,1\}$,
assume
	$\ApproxAlgdim =  \NumMC$,
	$\ApproxRefdim =  \NumRefMC$,
	$\dimSolution = 1$,
for every 
	$\BSinitprice, \BSrate, \sigma, w \in \R$,
	$T \in [0,\infty)$
let $X^{\BSrate, \sigma, \BSinitprice, w}_T \in \R$
satisfy
\begin{equation}
\begin{split} 
	X^{\BSrate, \sigma, \BSinitprice, w}_T
=
	\BSinitprice \exp \! \big(
		(\BSrate - \tfrac{\sigma^2}{2})T + \sigma \sqrt{T} w
	\big),
\end{split}
\end{equation}
let 
	$ \phi_\mathfrak{a} \colon \Param \times \R^{ \dimWRV } \to \R $, 
		$\mathfrak{a} \in \{0, 1\}$,
satisfy for all
	$p = (\BSinitprice, T, \BSrate, \sigma, K) \in \Param$,
	$w \in \R$
that
\begin{equation}
\label{BS1:eq0.1}
\begin{split} 
	\phi_0(p, w) 
=
	\exp({-\BSrate T})\max \{ X^{\BSrate, \sigma, \BSinitprice, w}_T - K,0\}
\qand
\end{split}
\end{equation}
\begin{equation}
\label{BS1:eq0.2}
\begin{split} 
\textstyle
	\phi_1(p, w) 
=
	\frac{\phi_0(p, w) + \phi_0(p, -w)}{2}
=
	\frac{\exp({-\BSrate T})}{2}(\max \{ X^{\BSrate, \sigma, \BSinitprice, w}_T - K,0\} + \max \{ X^{\BSrate, \sigma, \BSinitprice, -w}_T - K,0\}),
\end{split}
\end{equation}
assume for all 
	$p = (\BSinitprice, T, \BSrate, \sigma, K) \in \Param$,
	$w = (w_1, w_2, \ldots, w_{\NumRefMC}) \in \R^{\NumRefMC}$,
	$\theta = (\theta_1, \ldots, \theta_{\NumMC}) \in \R^{\NumMC}$
that
\begin{equation}
\label{BS1:eq1}
\begin{split} 
	\ApproxAlg(p, \theta)
=
	\frac{ 1 }{ \NumMC }
	{\textstyle
	\left[ 
		\sum\limits_{ \MCvariable = 1 }^{ \NumMC }
			\phi_a( 
				p, 
				\theta_{ \MCvariable}         
			)
	\right]}
\qandq
	\ApproxRef(p, w)
=
	\frac{ \mathbbm{1}_{\{0\}}(e)  }{ \NumRefMC }
	{\textstyle
	\left[ 
		\sum\limits_{ \MCvariable = 1 }^{ \NumRefMC }
			\phi_1( 
				p, 
				w_{ \MCvariable}         
			)
	\right]}
+
	\mathbbm{1}_{\{1\}}(e) 
	u(p),
\end{split}
\end{equation}
assume for all
	$x, y \in \R$
that
	$\costfct(x, y) = | x-y |^2$,
let
	$\mathbf{a} = {\frac{9}{10}}$,
	$\mathbf{b}  = {\frac{999}{1000}}$,
	$ \varepsilon \in (0,\infty)$,
let
	$(\gamma_m)_{m \in \N} \subseteq (0,\infty)$
satisfy for all 
	$j \in \{1, 2, \ldots, 7\}$,
	$m \in \N \cap (20000 (j-1),  20000 j]$
that
$
	{\gamma_m = 10^{-j}}
$,
assume for all
	$m \in \N$,
	$i \in \{1, 2,\ldots,\ApproxAlgdim \}$,
	$g_1 = (g_1^{(1)},\ldots,g_1^{(\ApproxAlgdim)})$, 	$g_2 = (g_2^{(1)},\ldots,g_2^{(\ApproxAlgdim)})$,
	$\dots$, 
	$g_m = (g_m^{(1)},\ldots,g_m^{(\ApproxAlgdim)})
	\in 
		\R^{\ApproxAlgdim}
	$
that
\begin{equation}
\label{BS:eq2}
\begin{split} 
	\psi_m^{(i)}(g_1, g_2, \ldots, g_m)
=
	\gamma_m
	\left[
		\frac{
			\sum_{k = 1}^m \mathbf{a}^{m-k}(1-\mathbf{a}) g_k^{(i)}
		}{
			1-\mathbf{a}^m
		}
	\right]
	\left[
		\varepsilon +\bigg[
			\frac{
				\sum_{k = 1}^m \mathbf{b}^{m-k}(1-\mathbf{b}) |g_k^{(i)}|^2
			}{
				1- \mathbf{b}^m
			}
		\bigg]^{\nicefrac{1}{2}} 
	\right]^{-1},
\end{split}
\end{equation}	
assume that  
$ \ApproxAlgRV$ is a standard normal random vector,
assume that  
$  
  \ParamRV_{ 
    1, 1 
  }
$ is $\mathcal{U}_{\Param}$-distributed, and
assume that 
$ 
  W^{ 1, 1 }
$
is a standard normal random variable.

Let us add some comments regarding the setup introduced above.
Observe that 
	\eqref{BS1:eq0}, 
	\eqref{BS1:eq0.1}, 
	\eqref{BS1:eq0.2}, and 
	\cref{cor:black_scholes} 
assure that for all 
	$p = (\BSinitprice, T, \BSrate, \sigma, K) \in \Param$,
it holds that
\begin{equation}
\begin{split} 
	u(p)
&=
	\EXPP{
		\exp({-\BSrate T})\max \{ X^{\BSrate, \sigma, \BSinitprice, \WRVs^{1, 1}}_T - K,0\}
	}
=
	\Exp{
		\phi_0(p, \WRVs^{1, 1})
	}
=
	\Exp{
		\phi_1(p, \WRVs^{1, 1})
	}.
\end{split}
\end{equation}
Moreover, note that in the case $\antithetic = 0$ the proposal algorithm on the left hand side of \eqref{BS1:eq1} corresponds to the standard MC method with $\NumMC$ samples and
that in the case $\antithetic = 1$ the proposal algorithm on the left hand side of \eqref{BS1:eq1} corresponds to the antithetic MC method with $\NumMC$ samples (cf.\ \cref{sect:anti}).
In addition, observe that in the case $\exact = 0$ the reference solutions on the right hand side of \eqref{BS1:eq1} correspond to antithetic MC approximations with $\NumRefMC$ samples and that in the case $\exact = 1$ the reference solutions on the right hand side of \eqref{BS1:eq1} are given by the exact solution.
Furthermore, observe that \cref{BS:eq2} describes the Adam optimizer in the setup of \cref{algo:general} (cf.\ Kingma \& Ba \cite{Kingma2014} and \cref{sect:Adam}).

In \cref{fig:BS_plot_1,fig:BS_plot_2,table:BS_LRV_exact,table:BS_LRV} we approximately present for different choices of
	$\NumMC \in \{2^5, 2^6, \ldots, 2^{13}\}$,
	$\NumRefMC \in \{2^{10}, 2^{11}, 2^{12}\}$,
	$\Batchsize \in  \{2^{11}, 2^{12}, 2^{13}\}$,
	$\antithetic, \exact \in \{0,1\}$
random realizations of the $L^1(\lambda_{\Param};\R)$-approximation error
\begin{equation}
\label{BS_lrv:eq1}
\begin{split} 
\textstyle
	\int_\Param 
		|u(p) - \ApproxAlg(p, \Theta_{\nrtrainstepsBS})|
	\d p
\end{split}
\end{equation}
(\nth{3} column in \cref{table:BS_LRV,table:BS_LRV_exact}),
random realizations of the $L^2(\lambda_{\Param};\R)$-approximation error
\begin{equation}
\label{BS_lrv:eq2}
\begin{split} 
\textstyle
	\bigl[
		\int_\Param 
			|u(p) - \ApproxAlg(p, \Theta_{\nrtrainstepsBS})|^2
		\d p
	\bigr]^{\nicefrac{1}{2}}
\end{split}
\end{equation}
(\cref{fig:BS_plot_1,fig:BS_plot_2} and \nth{4} column in \cref{table:BS_LRV,table:BS_LRV_exact}),
random realizations of the $L^\infty(\lambda_{\Param};\R)$-approximation error
\begin{equation}
\label{BS_lrv:eq3}
\begin{split} 
\textstyle
	\sup_{p \in \Param}
		|u(p) - \ApproxAlg(p, \Theta_{\nrtrainstepsBS})|,
\end{split}
\end{equation}
(\nth{5} column in \cref{table:BS_LRV,table:BS_LRV_exact}),
the time to compute $\Theta_{\nrtrainstepsBS}$
(\nth{6} column in \cref{table:BS_LRV,table:BS_LRV_exact}),
and the time to compute 
	$\numMCsamplesBS$ 
evaluations of the function
$
	\Param \ni p \mapsto \ApproxAlg(p, \Theta_{\nrtrainstepsBS}) \in \R
$
(\nth{7} column in \cref{table:BS_LRV,table:BS_LRV_exact}).
We approximated the integrals in \eqref{BS_lrv:eq1} and \eqref{BS_lrv:eq2} with the MC method based on $\numMCsamplesBS$ samples
and we approximated the suprema in \eqref{BS_lrv:eq3} based on $\numMCsamplesBS$ random samples 
(cf., e.g., Beck et al.\ \cite[Lemma 3.5]{BeckJafaari21} and Beck et al.\ \cite[Section 3.3]{Beck2019published}).

To compare the LRV strategy with existing approximation techniques from the literature, we also employ several other methods to approximate the function $u \colon \Param \to \R$ in \eqref{BS1:eq0}.
Specifically, 
	in \cref{table:BS_ANNs} we present numerical simulations for the deep learning method induced by Becker et al.\ \cite{BeckJafaari21}
	(with training values given by the exact solution,
	Adam $140000$ training steps, 
	batch size $8192$, 
	learning rate schedule 
	$
		\N \ni j 
		\mapsto 
			{\mathbbm{1}_{(0,20000]}(m)}{10^{-2}}
			+
			{\mathbbm{1}_{(20000,50000]}(m)}{10^{-3}}
			+
			{\mathbbm{1}_{(50000,80000]}(m)}{10^{-4}}
			+
			{\mathbbm{1}_{(80000,100000]}(m)}{10^{-5}}
			+
			{\mathbbm{1}_{(100000,120000]}(m)}{10^{-6}}
			+
			{\mathbbm{1}_{(120000,140000]}(m)}{10^{-7}}
	$, and
	GELU activation function),
	in \cref{table:BS_MC} we present numerical simulations for the standard and the antithetic MC method, and
	in \cref{table:BS_QMC} we present numerical simulations for the standard and the antithetic QMC method with Sobol sequences.
In \cref{table:BS_ANNs,table:BS_MC,table:BS_QMC} 
	we have approximated the $L^1(\lambda_{\Param};\R)$-approximation errors 
		of the respective approximation methods with the MC method based on $\numMCsamplesBS$ samples,
	we have approximated the $L^2(\lambda_{\Param};\R)$-approximation errors
		of the respective approximation methods with the MC method based on $\numMCsamplesBS$ samples, and 
	we have approximated the $L^\infty(\lambda_{\Param};\R)$-approximation errors
		of the respective approximation methods based on $\numMCsamplesBS$ random samples
		(cf., e.g., Beck et al.\ \cite[Lemma 3.5]{BeckJafaari21} and Beck et al.\ \cite[Section 3.3]{Beck2019published}).
		
Next we discuss the empirical distributions of the random variables learned by the LRV methodology and compare them to empirical distributions of MC and QMC samples. 
Specifically, in \cref{fig:Histograms_LRV} we visualize 
for $\NumMC \in \{ 2^9, 2^{10}, 2^{11}\}$, $\antithetic = 0$
realizations of 
empirical distributions of \emph{learned random variables} 
	$\Theta^{(1)}_{140000}, \Theta^{(2)}_{140000}, \ldots, \Theta^{(\NumMC)}_{140000}$
in the case of three different kinds of training procedures
and in \cref{fig:Histograms_MC} we visualize 
realizations of 
empirical distributions of random variables in the MC method and in QMC method based on Sobol sequences. Very roughly speaking, it seems that with increasingly precise reference solutions and thereby smaller $L^2$-errors, the LRV method produces learned random variables whose histograms approximate the density of the normal distribution more closely, in particular more closely than the realizations of the standard MC samples with which the LRV method is initialized.
This suggests that the LRV strategy is learning random variables which in some sense try to approximate the normal distribution.
On the other hand, we note that the QMC samples seem to have the most regular histograms, but still have much worse $L^2$-errors when compared to the learned random variables. 
One explanation for this could be that even though the histograms of the QMC method seem to approximate the normal density very accurately, the empirical moments of the QMC samples are a worse approximation of the moments of the normal distribution than the empirical moments of the learned random variables, and so the QMC samples effectively do not approximate the normal distribution as well as the learned random variables.

\begin{figure}[H]
\includegraphics[width=\linewidth]{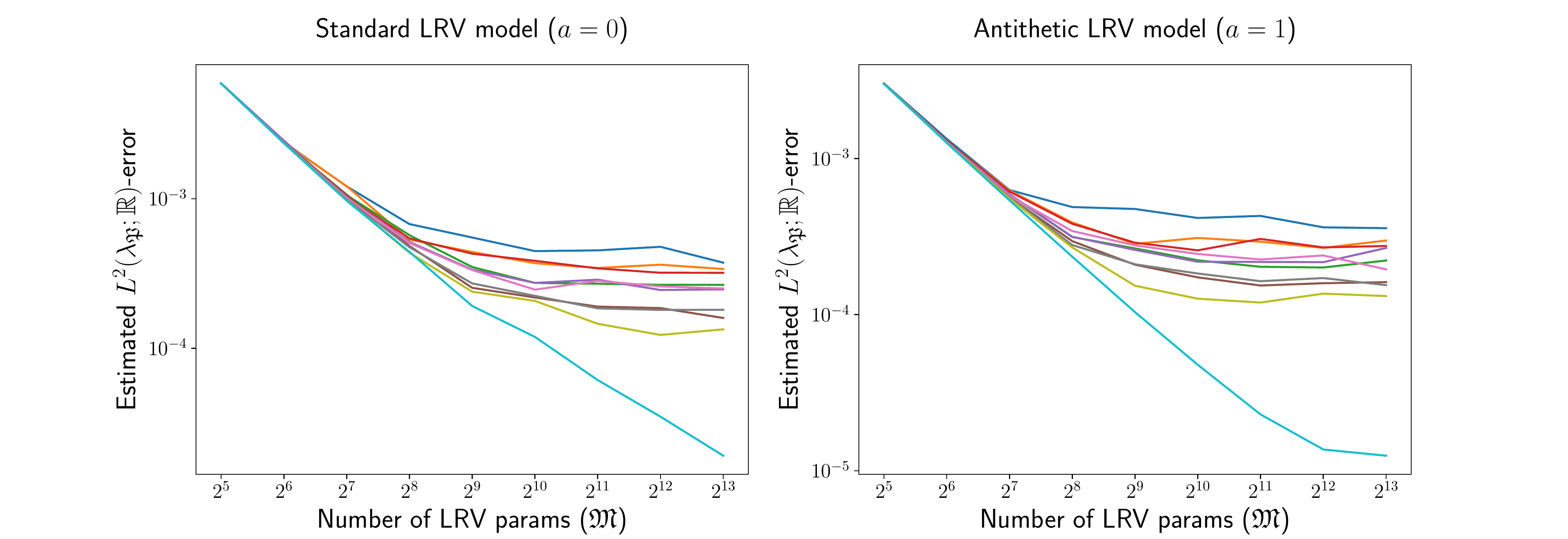}
\caption{\label{fig:BS_plot_1}
Numerical simulations for the LRV strategy in case of the Black-Scholes model for European call options on one underlying described in \cref{simul:BS1} (5-dimensional approximation problem).
See \cref{fig:BS_plot_2} below for the legend.
}
\end{figure}

\begin{figure}[H]
\includegraphics[width=\linewidth]{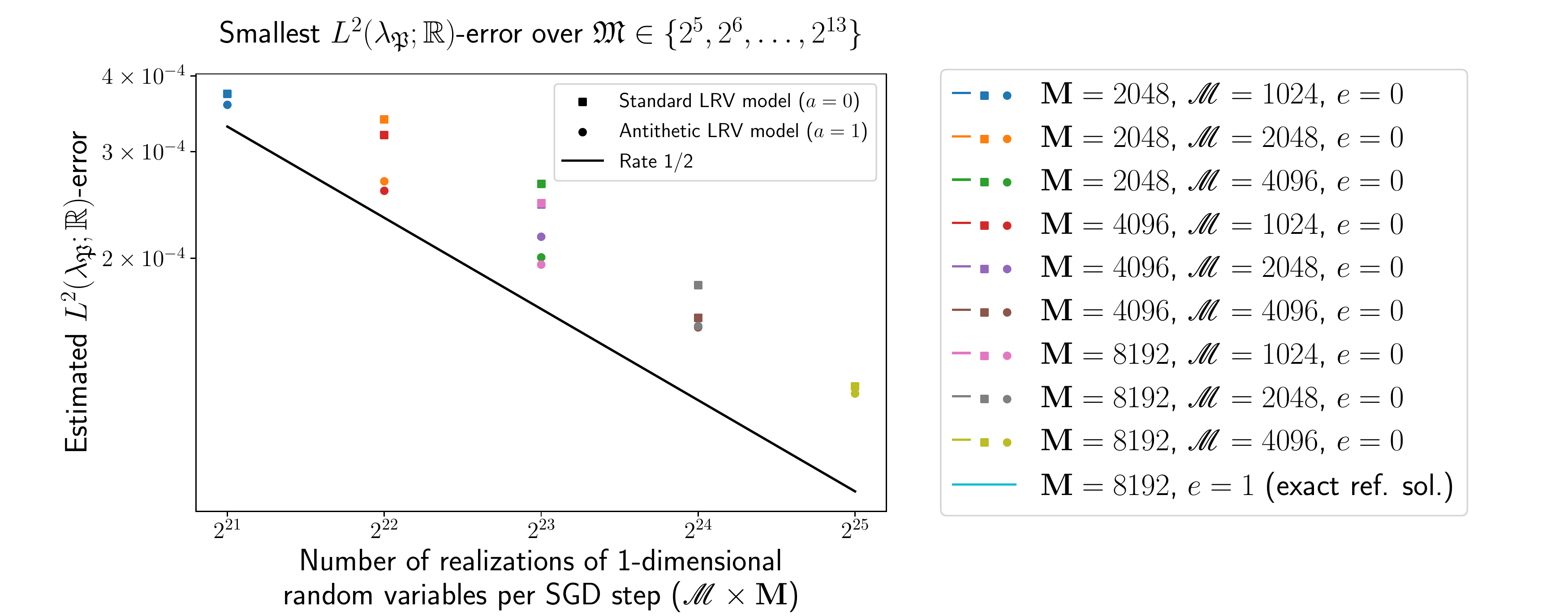}
\caption{\label{fig:BS_plot_2}Smallest estimated $L^2(\lambda_{\Param};\R)$-errors over $\NumMC$ for different choices of training parameters. Each dot corresponds to the minimum of a line of the same color in \cref{fig:BS_plot_1}.
The convergence rate 1/2 is inspired by the convergence rate 1/2 of the MC method which is strongly related to SGD.}
\end{figure}

\begin{table}[H] \tiny
\resizebox{\textwidth}{!}{
\csvreader[tabular=|c|c|c|c|c|c|c|c|c|,
     table head=
     \hline $\NumMC$ &  $a$ & \thead{Number \\of \\trainable \\parameters}
     & \thead{$L^1$-approx.\\ error}& \thead{$L^2$-approx. \\error}&\thead{ $L^\infty$-approx. \\ error} & \thead{Training \\ time \\ in seconds} &\thead{Evaluation \\ time  for \\ $\numMCsamplesBS$\\ evaluations \\ in seconds} \\\hline,
    late after line=\\\hline]{Table_2.tex}
{
	num_lrv_variables=\nsamp, 
	antithetic=\anti, 
	l1_error=\lll, 
	l_2_error=\llll, 
	l_inf_error=\linf, 
	train_time=\train, 
	eval_time=\eval  ,
	dtype = \dtype
}
{\nsamp& \anti& \nsamp&  \lll & \llll & \linf &\train&\eval }}
\caption{\label{table:BS_LRV}Numerical simulations for the LRV strategy in case of the Black-Scholes model for European call options on one underlying described in \cref{simul:BS1} (5-dimensional approximation problem) trained with 
batch size $\Batchsize = 8192$ 
and
approximate reference solutions ($\exact = 0$) 
based on $\NumRefMC = 4096$ antithetic MC samples (cf.\ yellow dots and lines in \cref{fig:BS_plot_1,fig:BS_plot_2})}
\end{table}

\begin{table}[H] \tiny
\resizebox{\textwidth}{!}{
\csvreader[tabular=|c|c|c|c|c|c|c|c|c|,
     table head=
     \hline $\NumMC$ &  $a$ & \thead{Number \\of \\trainable \\parameters}
     & \thead{$L^1$-approx.\\ error}& \thead{$L^2$-approx. \\error}&\thead{ $L^\infty$-approx. \\ error} & \thead{Training \\ time \\ in seconds} &\thead{Evaluation \\ time  for \\ $\numMCsamplesBS$\\ evaluations \\ in seconds} \\\hline,
    late after line=\\\hline]{Table_3.tex}
{
	num_lrv_variables=\nsamp, 
	antithetic=\anti, 
	l1_error=\lll, 
	l_2_error=\llll, 
	l_inf_error=\linf, 
	train_time=\train, 
	eval_time=\eval,
	dtype = \dtype
}
{\nsamp& \anti& \nsamp&  \lll & \llll & \linf &\train&\eval }}
\caption{\label{table:BS_LRV_exact}Numerical simulations for the LRV strategy in case of the Black-Scholes model for European call options on one underlying described in \cref{simul:BS1} (5-dimensional approximation problem) trained 
with batch size $\Batchsize = 8192$ and exact ($\exact = 1$) reference solutions (cf.\ bright blue lines in \cref{fig:BS_plot_1,fig:BS_plot_2})}
\end{table}

\begin{table}[H] \tiny
\resizebox{\textwidth}{!}{
\csvreader[tabular=|c|c|c|c|c|c|c|c|c|c|,
     table head=
     \hline \thead{Number \\ of \\ hidden \\ layers}& \thead{Number of \\ neurons \\ on  each \\ hidden \\ layer}& \thead{Number \\ of \\ trainable \\ parameters} & \thead{$L^1$-approx. \\ error}& \thead{$L^2$-approx. \\error}&\thead{ $L^\infty$-approx. \\ error} & \thead{Training \\ time \\ in \\ seconds} &\thead{Evaluation \\ time  for \\ $\numMCsamplesBS$\\ evaluations \\ in seconds} \\\hline,
    late after line=\\\hline]{Table_4.tex}
{
	layers = \lyrs,
	inner_dim = \inner,
	num_params = \weights,
	mc_samples = \mcsamp, 
	l1_error=\lll, 
	l_2_error=\llll, 
	l_inf_error=\linf, 
	train_time=\train, 
	eval_time=\eval  ,
	dtype = \dtype
}
{\pgfmathparse{int(\lyrs )}\pgfmathresult & \inner &\weights & \lll & \llll & \linf &\train & \eval }}
\caption{\label{table:BS_ANNs}Numerical simulations for the deep learning method induced by Becker et al.\ \cite{BeckJafaari21} in case of the Black-Scholes model European call options on one underlying described in \cref{simul:BS1} (5-dimensional approximation problem)}
\end{table}

\begin{table}[H] \tiny
\resizebox{\textwidth}{!}{
\csvreader[tabular=|c|c|c|c|c|c|c|c|,
     table head=
     \hline \thead{Number \\ of \\MC \\samples}& \thead{MC Method \\ 0: standard \\ 1: antithetic} & \thead{$L^1$-approx. \\ error}& \thead{$L^2$-approx. \\error}&\thead{ $L^\infty$-approx. \\ error} &\thead{Evaluation \\ time  for \\ $\numMCsamplesBS$\\ evaluations \\ in seconds} \\\hline,
    late after line=\\\hline]{Table_5.tex}
{
	num_samples=\nsamp, 
	mc_samples = \mcsamp, 
	antithetic=\anti, 
	l1_error=\lll, 
	l_2_error=\llll, 
	l_inf_error=\linf, 
	time=\train, 
	dtype = \dtype
}
{\nsamp&\anti& \lll & \llll & \linf &\train }}
\caption{\label{table:BS_MC}Numerical simulations for the standard and the antithetic MC method in case of the Black-Scholes model for European call options on one underlying described in \cref{simul:BS1} (5-dimensional approximation problem)}
\end{table}

\begin{table}[H] \tiny
\resizebox{\textwidth}{!}{
\csvreader[tabular=|c|c|c|c|c|c|c|c|,
     table head=
     \hline 
     	\thead{Number \\of \\MC \\samples}& \thead{QMC Method \\ 0: standard \\ 1: antithetic} & \thead{$L^1$-approx. \\ error}& \thead{$L^2$-approx. \\error}&\thead{ $L^\infty$-approx. \\ error} &\thead{Evaluation \\ time for \\  $\numMCsamplesBS$\\ evaluations \\ in seconds} 
     \\\hline,
    late after line=\\\hline]{Table_6.tex}
{
	num_samples=\nsamp, 
	mc_samples = \mcsamp, 
	antithetic=\anti, 
	l1_error=\lll, 
	l_2_error=\llll, 
	l_inf_error=\linf, 
	time=\train, 
	dtype = \dtype
}
{\nsamp&\anti& \lll & \llll & \linf &\train }}
\caption{\label{table:BS_QMC}Numerical simulations for the standard and the antithetic QMC method with Sobol sequences in case of the Black-Scholes model for European call options on one underlying described in \cref{simul:BS1} (5-dimensional approximation problem)}
\end{table}

\begin{figure}[H]
\includegraphics[width=\linewidth]{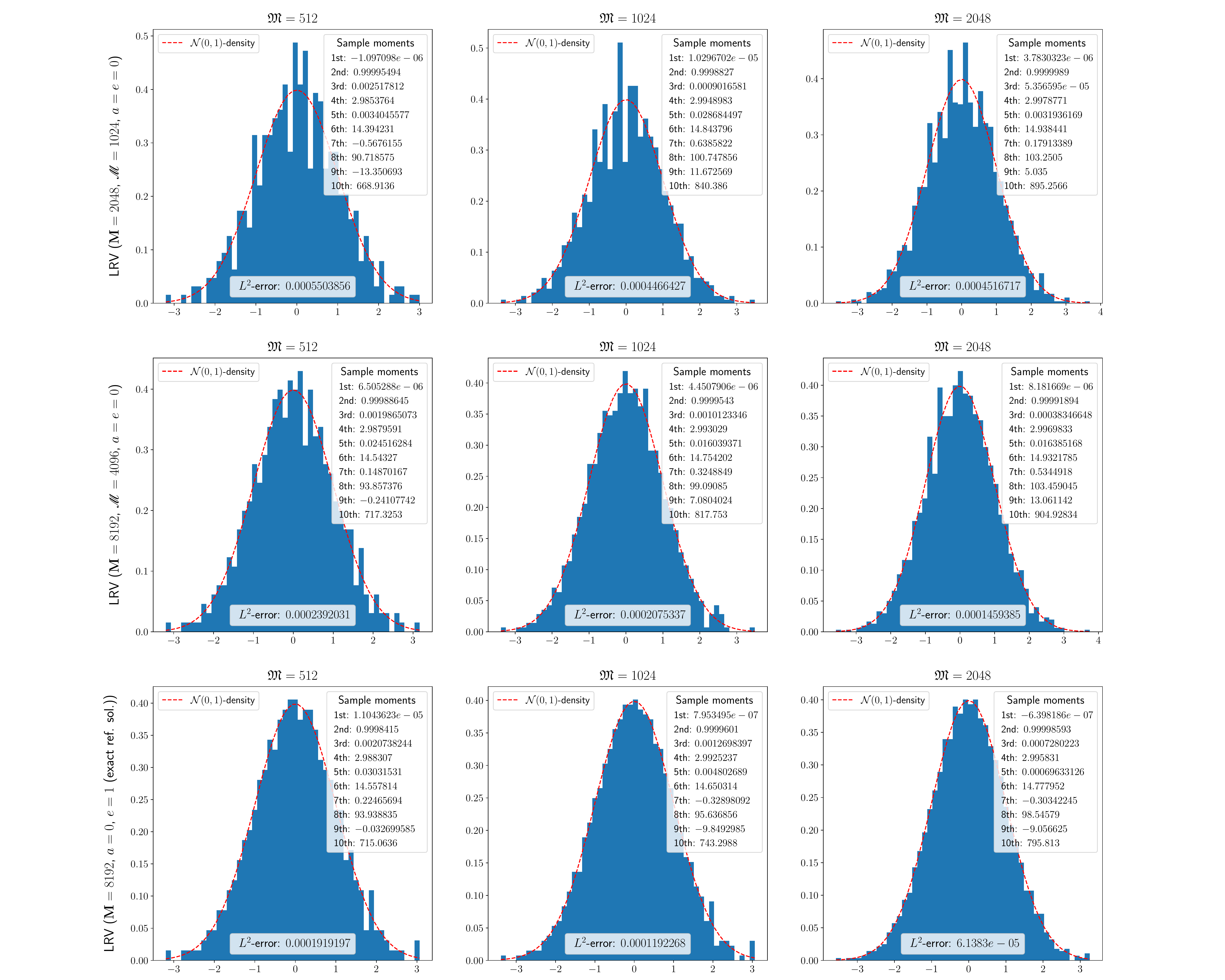}
\caption{\label{fig:Histograms_LRV}
Histograms and sample moments of realizations of \emph{learned random variables} 
	$\Theta^{(1)}_{140000}, \allowbreak \Theta^{(2)}_{140000}, \ldots, \Theta^{(\NumMC)}_{140000}$
in the LRV strategy in case of the Black-Scholes model for European call options on one underlying described in \cref{simul:BS1}.
See \cref{fig:Histograms_MC} for the moments of the standard normal distribution.
}
\end{figure}

\begin{figure}[H]
\includegraphics[width=\linewidth]{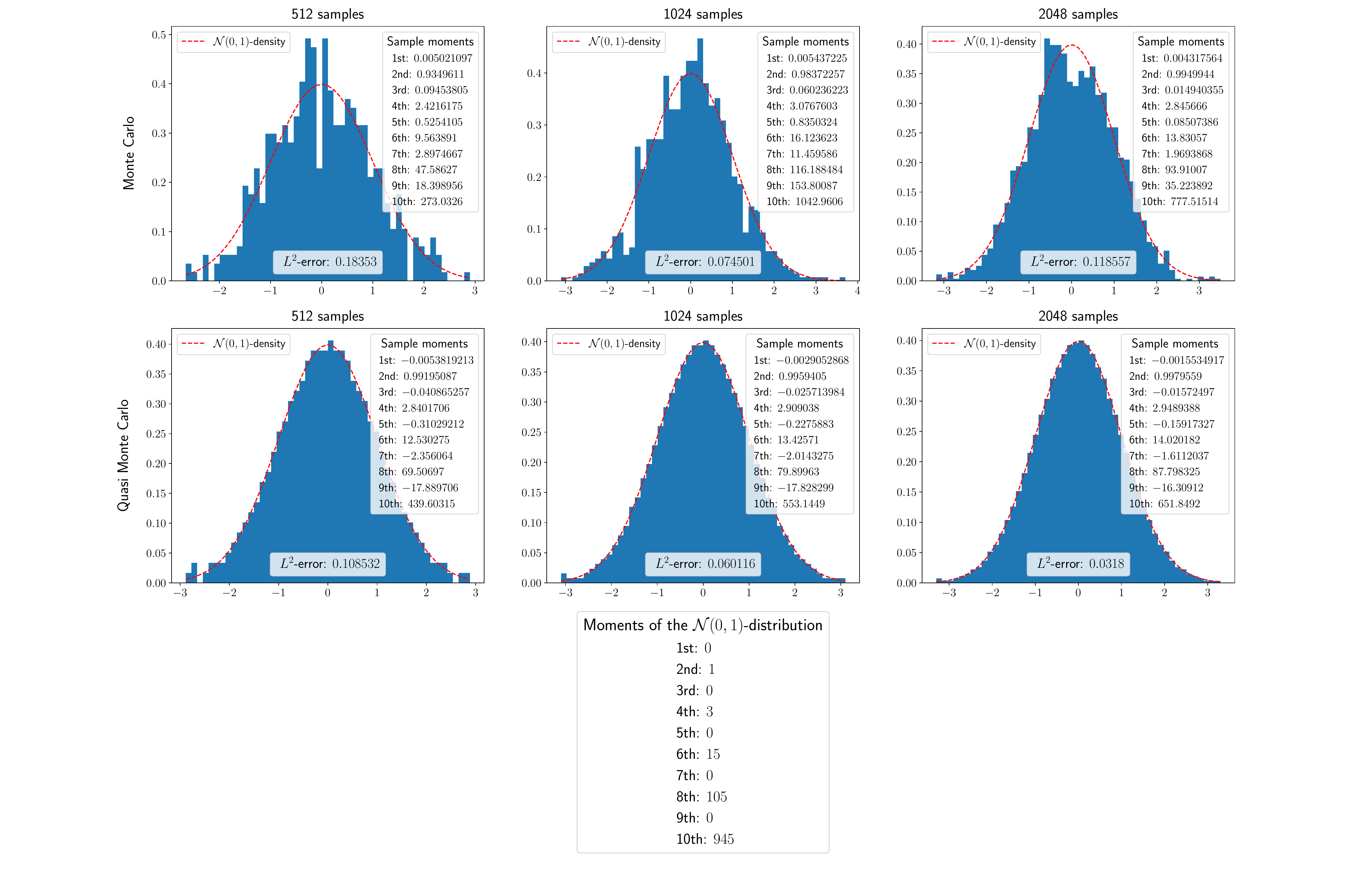}
\caption{\label{fig:Histograms_MC}
Histograms and sample moments of realizations of random variables appearing in the MC method and QMC method in case of the Black-Scholes model for European call options on one underlying described in \cref{simul:BS1}
}
\end{figure}

\subsection[Parametric Black-Scholes equations for multi-asset worst-of basket put options]{Parametric Black-Scholes partial differential equations for multi-asset worst-of basket put options}
\label{simul:BSworstPut}

\newcommand{\numMCsamplesBSworstPut}{128000}
\newcommand{\numRefSamplesBSworstPut}{209715200}
\newcommand{\numRefAlgSamplesBSworstPut}{8192}
\newcommand{\numTrainstepsBSworstPut}{30000}

\newcommand{\covMatrix}{Q}
\newcommand{\Choleski}{L}
\newcommand{\CorrBM}{\mathcal{B}}
\newcommand{\Minfunction}{\mathbf{l}}

In this section we apply the LRV strategy for the approximation of the price of European worst-of basket put options in the Black-Scholes option pricing model (cf.\ Black \& Scholes \cite{BlackScholes73} and Merton \cite{Merton73}).
We start by introducing the Black-Scholes model for this pricing problem in the context of \cref{algo:general}.

Assume \cref{algo:general},
let 
	$\dimProblem = 3$, 
	$e_1 = (1, 0, 0)$, 
	$e_2 = (0, 1, 0)$, 
	$e_3 = (0, 0, 1)$,
let $\covMatrix \colon \R^\dimProblem \to \R^{\dimProblem \times \dimProblem}$ satisfy for all 
	$\rho = (\rho_1, \rho_2, \rho_3) \in \R^\dimProblem$
that
\begin{equation}
\begin{split} 
	\covMatrix(\rho) 
= 
	\begin{pmatrix}
		1 &\rho_1 &\rho_2 \\
		\rho_1 &1 &\rho_3 \\
		\rho_2 &\rho_3 &1
	\end{pmatrix},
\end{split}
\end{equation}
let 
$
	R
\subseteq 
	\R^\dimProblem
$
satisfy
$
	R 
= 
	\bigl\{
		\rho = (\rho_1, \rho_2, \rho_3) \in [-\frac{95}{100},\frac{95}{100}]^\dimProblem 
		\colon  
		1 - |\rho_2|^2 - \frac{(\rho_3 - \rho_1\rho_2)^2}{(1-|\rho_1|^2)^{1/2}} \geq 0
	\bigr\}
$,
assume
	$\dimParam = 15$
and
\begin{equation}
\begin{split} 
	\Param 
= 
	[90, 110]^3 \times
	[\tfrac{1}{100}, 1] \times
	[-\tfrac{1}{20}, \tfrac{1}{20}] \times
	[0, \tfrac{1}{10}]^3 \times
	[\tfrac{1}{100}, \tfrac{1}{2}]^3 \times
	R \times 
	[90, 110],
\end{split}
\end{equation}
let $\Choleski \colon R \to \R^{\dimProblem \times \dimProblem}$ satisfy for all 
	$\rho = (\rho_1, \rho_2, \rho_3) \in R$
that
\begin{equation}
\begin{split} 
	\Choleski(\rho)
=
	\begin{pmatrix}
		1 & 0 & 0 \\
		\rho_1& (1 - |\rho_1|^2)^{{1}/{2}} &0 \\
		\rho_2& \frac{\rho_3 - \rho_1\rho_2}{(1 - |\rho_1|^2)^{{1}/{2}}} & \big(1 - |\rho_2|^2 - \frac{(\rho_3 - \rho_1\rho_2)^2}{1 - |\rho_1|^2}\big)^{{1}/{2}}
	\end{pmatrix},
\end{split}
\end{equation}
for every
	$\BSinitprice = (\BSinitprice_1, \BSinitprice_2, \BSinitprice_3) \in \R^\dimProblem$,
	$T \in [0,\infty)$,
	$\BSrate \in \R$,
	$\BSdividend = (\BSdividend_1, \BSdividend_2, \BSdividend_3) \in \R^\dimProblem$,
	$\sigma = (\sigma_1, \sigma_2, \sigma_3) \in \R^\dimProblem$,
	$\rho \in R$,
	$w \in \R^\dimProblem$
let 
$ 
	X^{\BSinitprice, \BSrate, \BSdividend, \sigma, \rho, w}_T
= 
	(
	X^{\BSrate, \BSinitprice, \BSdividend, \sigma, \rho, w, 1}_T, 
	X^{\BSrate, \BSinitprice, \BSdividend, \sigma, \rho, w, 2}_T, 
	X^{\BSrate, \BSinitprice, \BSdividend, \sigma, \rho, w, 3}_T
	) 
	\in \R^{ \dimProblem }
$ 
satisfy\footnote{Note that for all
	$n \in \N$,
	$v= (v_1, \ldots, v_n)$, 
	$w = (w_1, \ldots, w_n) \in \R^n$
it holds that 
$
	\langle v, w \rangle = \sum_{i = 1}^n v_i w_i
$.
} for all
	$i \in \{1, 2, 3\}$
that
\begin{equation}
\label{BSworstPut:eq1}
\begin{split} 
	X^{\BSinitprice, \BSrate, \BSdividend, \sigma, \rho, w, i}_T
=
	\BSinitprice_i 
	\exp\Bigl(
		\bigl[\BSrate - \BSdividend_i -  \tfrac{|\sigma_i|^2}{2}\bigr]T
		+
		\sqrt{T}
		\sigma_i 
		\langle 
			\Choleski(\rho) w
			,
			e_i
		\rangle 
	\Bigr),
\end{split}
\end{equation}
let 
	$\Minfunction \colon \R^\dimProblem \to \R$ 
satisfy for all 
	$x = (x_1, x_2, x_3) \in \R^\dimProblem$
that
$\Minfunction(x) = \min\{ x_1, x_2, x_3 \}$,
let 
$
	\phi \colon \Param \times \R^\dimProblem \to \R
$
satisfy for all
	$p = (\BSinitprice, T, \BSrate, \BSdividend, \sigma, \rho, K) \in \Param$,
	$w \in \R^\dimProblem$
that
\begin{equation}
\label{BSworstPut:eq2.2}
\begin{split} 
	\phi(p, w)
=
	\exp(-\BSrate T)
	\max\bigl\{
		K - 
		\Minfunction(
			X^{\BSinitprice, \BSrate, \BSdividend, \sigma, \rho, w}_T
		)
		,
		0
	\bigr\},
\end{split}
\end{equation}
let $\WRV \colon \Omega \to \R^{ \dimProblem } $ be a standard normal random vector,
and let $u \colon \Param \to \R$ satisfy for all
	$p \in \Param$
that
\begin{equation}
\label{BSworstPut:eq3}
\begin{split} 
	u(p)
=
	\Exp{
		\phi(p, \WRV)
	}.
\end{split}
\end{equation}
In the economic interpretation of the Black-Scholes model for every 
	$p = (\BSinitprice, T, \BSrate, \BSdividend, \sigma, \rho, K) \in \Param$
the number $u(p) \in \R$ corresponds to the fair price of a worst-of basekt put option on three underlying assets with 
	initial prices $\BSinitprice$,
	time of maturity $T$,
	risk free rate $\BSrate$,
	dividend yields of the respective underlying assets $\BSdividend$,
	volatilities of the respective underlying assets $\sigma$,
	covariance matrix of the Brownian motions $\covMatrix(\rho)$,
	and
	strike price $K$.

We now specify the mathematical objects in the LRV strategy appearing in \cref{algo:general} 
to approximately calculate the target function $u \colon \Param \to \R$ in \cref{BSworstPut:eq3}.
Specifically, in addition to the assumptions above, 
let 
	$\NumMC \in \N$,
	$\NumRefMC =  \numRefAlgSamplesBSworstPut$,
assume
	$\ApproxAlgdim =  \NumMC \dimProblem$,
	$\ApproxRefdim =  \NumRefMC \dimProblem$,
	$\dimSolution = 1$,
	$\Batchsize = {4096}$,
assume for all 
	$ p = (\BSinitprice, T, \BSrate, \BSdividend, \sigma, \rho, K) \in \Param $,
	$ w = (w_1,\ldots, w_{\NumRefMC \dimProblem}) \in \R^{\NumRefMC \dimProblem}$,
	$ \theta = (\theta_1,\ldots, \theta_{\NumMC \dimProblem}) \in \R^{\NumMC \dimProblem}$
that
\begin{equation}
\label{BSworstPut:eq7}
\begin{split} 
	\ApproxRef(p, w)
=
	\frac{ 1 }{ \NumRefMC }
	\left[ 
		\textstyle \sum\limits_{ \MCvariable = 1 }^{ \NumRefMC }
			\phi\big( 
				p, 
				( 
				w_{ ( \MCvariable - 1 )\dimProblem + k }
				)_{k \in \{1, 2, 3 \}}         
			\big)
	\right]
\,\,\,\text{and}\,\,\,
	\ApproxAlg(p, \theta)
=
	\frac{ 1 }{ \NumMC }
	\left[ 
		\textstyle \sum\limits_{ \MCvariable = 1 }^{ \NumMC }
			\phi\big( 
				p, 
				( 
				\theta_{ ( \MCvariable - 1 )\dimProblem + k }
				)_{k \in \{1, 2, 3 \}}         
			\big)
	\right],
\end{split}
\end{equation}
assume for all
	$x, y \in \R$
that
	$\costfct(x, y) = | x-y |^2$,
let
	$\mathbf{a} = {\frac{9}{10}}$,
	$\mathbf{b}  = {\frac{999}{1000}}$,
	$ \varepsilon \in (0,\infty)$,
let
	$(\gamma_m)_{m \in \N} \subseteq (0,\infty)$
satisfy for all 
	$m \in \{1, 2, \ldots, \numTrainstepsBSworstPut \}$
that
$
	\gamma_m 
=  
	{\mathbbm{1}_{(0,15000]}(m)}{10^{-3}}
	+
	{\mathbbm{1}_{(15000,25000]}(m)}{10^{-4}}
	+
	{\mathbbm{1}_{(25000,30000]}(m)}{10^{-5}}
$,
assume for all
	$m \in \N$,
	$i \in \{1, 2,\ldots,\ApproxAlgdim \}$,
	$g_1 = (g_1^{(1)},\ldots,g_1^{(\ApproxAlgdim)})$, 	$g_2 = (g_2^{(1)},\ldots,g_2^{(\ApproxAlgdim)})$,
	$\dots$, 
	$g_m = (g_m^{(1)},\ldots,g_m^{(\ApproxAlgdim)})
	\in 
		\R^{\ApproxAlgdim}
	$
that
\begin{equation}
\label{BSworstPut:eq7.2}
\begin{split} 
	\psi_m^{(i)}(g_1, g_2, \ldots, g_m)
=
	\gamma_m
	\left[
		\frac{
			\sum_{k = 1}^m \mathbf{a}^{m-k}(1-\mathbf{a}) g_k^{(i)}
		}{
			1-\mathbf{a}^m
		}
	\right]
	\left[
		\varepsilon +\bigg[
			\frac{
				\sum_{k = 1}^m \mathbf{b}^{m-k}(1-\mathbf{b}) |g_k^{(i)}|^2
			}{
				1- \mathbf{b}^m
			}
		\bigg]^{\nicefrac{1}{2}} 
	\right]^{-1},
\end{split}
\end{equation}	
assume that  
$ \ApproxAlgRV$ is a standard normal random vector,
assume that  
$  
  \ParamRV_{ 
    1, 1 
  }
$ is $\mathcal{U}_{\Param}$-distributed,
and
assume that 
$ 
  W^{ 1, 1 }
$
is a standard normal random vector.

Let us add some comments regarding the setup introduced above.
Note that the proposal algorithm in \cref{BSworstPut:eq3} corresponds to the MC method. 
Furthermore, observe that \cref{BSworstPut:eq7.2} describes the Adam optimizer in the setup of \cref{algo:general} (cf.\ Kingma \& Ba \cite{Kingma2014} and \cref{sect:Adam}).

In \cref{table:BSworstPut_LRV} we approximately present for
	$\NumMC \in \{2^5, 2^6, \ldots, 2^{13}\}$
one random realization of the $L^1(\lambda_{\Param};\R)$-approximation error
\begin{equation}
\label{BSworstPut_LRV:eq1}
\begin{split} 
\textstyle
	\int_\Param 
		|u(p) - \ApproxAlg(p, \Theta_{\numTrainstepsBSworstPut})|
	\d p
\end{split}
\end{equation}
(\nth{3} column in \cref{table:BSworstPut_LRV}),
one random realization of the $L^2(\lambda_{\Param};\R)$-approximation error
\begin{equation}
\label{BSworstPut_LRV:eq2}
\begin{split} 
\textstyle
	\bigl[
		\int_\Param 
			|u(p) - \ApproxAlg(p, \Theta_{\numTrainstepsBSworstPut})|^2
		\d p
	\bigr]^{\nicefrac{1}{2}}
\end{split}
\end{equation}
(\nth{4} column in \cref{table:BSworstPut_LRV}),
one random realization of the $L^\infty(\lambda_{\Param};\R)$-approximation error
\begin{equation}
\label{BSworstPut_LRV:eq3}
\begin{split} 
\textstyle
	\sup_{p \in \Param}
		|u(p) - \ApproxAlg(p, \Theta_{\numTrainstepsBSworstPut})|,
\end{split}
\end{equation}
(\nth{5} column in \cref{table:BSworstPut_LRV}), and
the time to compute $\Theta_{\numTrainstepsBSworstPut}$
(\nth{6} column in \cref{table:BSworstPut_LRV}).
For every 	
	$\NumMC \in \{2^5, 2^6, \ldots, 2^{13}\}$
we approximated the integrals in \eqref{BSworstPut_LRV:eq1} and \eqref{BSworstPut_LRV:eq2} with the MC method based on $\numMCsamplesBSworstPut$ samples
and we approximated the supremum in \eqref{BSworstPut_LRV:eq3} based on $\numMCsamplesBSworstPut$ random samples 
(cf., e.g., Beck et al.\ \cite[Lemma 3.5]{BeckJafaari21} and Beck et al.\ \cite[Section 3.3]{Beck2019published}).
In our approximations of \eqref{BSworstPut_LRV:eq1}, \eqref{BSworstPut_LRV:eq2}, and \eqref{BSworstPut_LRV:eq3} we have approximately computed for all required sample points 
	$p \in \Param$
the value $u(p)$ of the unknown exact solution 
by means of an MC approximation with $ \numRefSamplesBSworstPut$ MC samples.

Besides the LRV strategy we also employed the standard MC method to approximate the function $u \colon \Param \to \R$ in \eqref{BSworstPut:eq3}. 
In \cref{table:BSworstPut_MC} we present the corresponding numerical simulation results.
In \cref{table:BSworstPut_MC}
	we have approximated the $L^1(\lambda_{\Param};\R)$-approximation error
		of the MC method with the MC method based on $\numMCsamplesBSworstPut$ samples,
	we have approximated the $L^2(\lambda_{\Param};\R)$-approximation errors
		of the MC method with the MC method based on $\numMCsamplesBSworstPut$ samples, and 
	we have approximated the $L^\infty(\lambda_{\Param};\R)$-approximation errors
		of the MC method based on $\numMCsamplesBSworstPut$ random samples
		(cf., e.g., Beck et al.\ \cite[Lemma 3.5]{BeckJafaari21} and Beck et al.\ \cite[Section 3.3]{Beck2019published}).
In our approximations of the above mentioned approximation errors we have approximately computed for all required sample points 
	$p \in \Param$
the value $u(p)$ of the unknown exact solution by means of an MC approximation with $\numRefSamplesBSworstPut$ MC samples.

\begin{table}[H] \tiny
\resizebox{\textwidth}{!}{
\csvreader[tabular=|c|c|c|c|c|c|,
     table head=
     \hline $\NumMC$ & \thead{Number \\of \\trainable \\parameters}&\thead{$L^1$-approx. \\ error}& \thead{$L^2$-approx. \\error}&\thead{ $L^\infty$-approx. \\ error} 
     & \thead{Training \\ time \\ in seconds} 
     \\\hline,
    late after line=\\\hline,
    filter expr={
		test{\ifnumless{\thecsvinputline}{11}}
    }
    ]
{Table_7.tex}
{
	num_lrv_samples=\nsamp, 
	num_params =\nparams,
	l1_error=\lll, 
	l_2_error=\llll, 
	l_inf_error=\linf, 
	train_time=\train,
	time=\eval  ,
	dtype = \dtype
}
{ \nsamp& \nparams &  \lll & \llll & \linf &\train
}
}
\caption{\label{table:BSworstPut_LRV}Numerical simulations for the LRV strategy in case of the Black-Scholes model for European worst-of basket put options on three underlyings described in \cref{simul:BSworstPut} (15-dimensional approximation problem)
}
\end{table}

\begin{table}[H] \tiny
\resizebox{\textwidth}{!}{
\csvreader[tabular=|c|c|c|c|c|c|c|c|,
     table head=
     \hline 
     \thead{Number \\ of \\MC \\samples}&  
     \thead{Number of \\ scalar random \\ variables \\per evaluation} 
     &\thead{$L^1$-approx. \\ error} 
     &\thead{$L^2$-approx. \\error}
     &\thead{ $L^\infty$-approx. \\ error} 
     \\\hline,
    late after line=\\\hline]{Table_8.tex}
{
	num_samples=\nsamp, 
	num_RVs = \ninputs,
	l1_error=\lll, 
	l_2_error=\llll, 
	l_inf_error=\linf, 
	time=\train, 
	dtype = \dtype
}
{
	\nsamp
	& \ninputs 
	& \lll  
	& \llll 
	& \linf 
}
}
\caption{\label{table:BSworstPut_MC}Numerical simulations for the standard MC method in case of the Black-Scholes model for European worst-of basket put options on three underlyings described in \cref{simul:BSworstPut} (15-dimensional approximation problem)}
\end{table}

\subsection[Parametric Black-Scholes equations for multi-asset average put options with barriers]{Parametric Black-Scholes partial differential equations for multi-asset average put options with knock-in barriers}
\label{simul:BSaverageBarrier}

\newcommand{\numMCsamplesBSaverageBarrier}{12800}
\newcommand{\numRefSamplesBSaverageBarrier}{52428800}
\newcommand{\numRefAlgSamplesBSaverageBarrier}{1024}
\newcommand{\numTrainstepsBSaverageBarrier}{40000}

\newcommand{\BarrierSet}{\mathfrak{B}}
\newcommand{\Maxfunction}{\mathbf{h}}
\newcommand{\Middlefunction}{\mathbf{m}}

\newcommand{\Trigger}{\mathscr{T}}
\newcommand{\UppercrossProb}{\mathscr{U}}
\newcommand{\LowercrossProb}{\mathscr{L}}
\newcommand{\crossProb}{\mathscr{P}}

In this section we apply the LRV strategy to a more complicated and practically relevant Black-Scholes option pricing problem. 
Specifically, we approximate the fair price of a European average basket put option on three underlyings with a knock-in barrier in the Black-Scholes model (cf.\ Black \& Scholes \cite{BlackScholes73} and Merton \cite{Merton73}).
We start by introducing the Black-Scholes model for this pricing problem in the context of \cref{algo:general}.

Assume \cref{algo:general},
let 
	$\dimProblem = 3$, 
	$e_1 = (1, 0, 0)$, 
	$e_2 = (0, 1, 0)$, 
	$e_3 = (0, 0, 1)$,
let $\covMatrix \colon \R^\dimProblem \to \R^{\dimProblem \times \dimProblem}$ satisfy for all 
	$\rho = (\rho_1, \rho_2, \rho_3) \in \R^\dimProblem$
that
\begin{equation}
\begin{split} 
	\covMatrix(\rho) 
= 
	\begin{pmatrix}
		1 &\rho_1 &\rho_2 \\
		\rho_1 &1 &\rho_3 \\
		\rho_2 &\rho_3 &1
	\end{pmatrix},
\end{split}
\end{equation}
let 
$
	R
\subseteq 
	\R^\dimProblem
$
satisfy
$
	R 
= 
	\bigl\{
		\rho = (\rho_1, \rho_2, \rho_3) \in [-\frac{95}{100},\frac{95}{100}]^\dimProblem 
		\colon  
		1 - |\rho_2|^2 - \frac{(\rho_3 - \rho_1\rho_2)^2}{(1-|\rho_1|^2)^{1/2}} \geq 0
	\bigr\}
$,
assume
	$\dimParam = 16$
and
\begin{equation}
\begin{split} 
	\Param 
= 
	[90, 110]^3 \times
	[\tfrac{1}{2}, 1] \times
	[-\tfrac{1}{20}, \tfrac{1}{20}] \times
	[0, \tfrac{1}{10}]^3 \times
	[\tfrac{1}{100}, \tfrac{1}{2}]^3 \times 
	R \times 
	[90, 110] \times
	[70, 80],
\end{split}
\end{equation}
let $\Choleski \colon R \to \R^{\dimProblem \times \dimProblem}$ satisfy for all 
	$\rho = (\rho_1, \rho_2, \rho_3) \in R$
that
\begin{equation}
\begin{split} 
	\Choleski(\rho)
=
	\begin{pmatrix}
		1 & 0 & 0 \\
		\rho_1& (1 - |\rho_1|^2)^{{1}/{2}} &0 \\
		\rho_2& \frac{\rho_3 - \rho_1\rho_2}{(1 - |\rho_1|^2)^{{1}/{2}}} & \big(1 - |\rho_2|^2 - \frac{(\rho_3 - \rho_1\rho_2)^2}{1 - |\rho_1|^2}\big)^{{1}/{2}}
	\end{pmatrix},
\end{split}
\end{equation}
let $\WRV \colon \Omega \to C([0, \infty), \R^{ \dimProblem }) $ be a standard Brownian motion with continuous sample paths,
for every
	$\BSinitprice = (\BSinitprice_1, \BSinitprice_2, \BSinitprice_3) \in \R^\dimProblem$,
	$\BSrate \in \R$,
	$\BSdividend = (\BSdividend_1, \BSdividend_2, \BSdividend_3) \in \R^\dimProblem$,
	$\sigma = (\sigma_1, \sigma_2, \sigma_3) \in \R^\dimProblem$,
	$\rho \in \R^\dimProblem$
let 
$ 
	X^{\BSinitprice, \BSrate, \BSdividend, \sigma, \rho} 
= 
	(
	X^{\BSrate, \BSinitprice, \BSdividend, \sigma, \rho, 1}, 
	X^{\BSrate, \BSinitprice, \BSdividend, \sigma, \rho, 2}, 
	X^{\BSrate, \BSinitprice, \BSdividend, \sigma, \rho, 3}
	) 
	\colon 
	C([0, \infty), \R^{ \dimProblem })
	\to 
	C([0, \infty), \R^{ \dimProblem })
$ 
satisfy for all
	$w \in C([0, \infty), \R^{ \dimProblem })$,
	$t \in [0,\infty)$,
	$i \in \{1, 2, 3\}$
that
\begin{equation}
\label{BSaverageBarrier:eq1}
\begin{split} 
	X^{\BSinitprice, \BSrate, \BSdividend, \sigma, \rho, i}_t(w)
=
	\BSinitprice_i 
	\exp\Bigl(
		\bigl[\BSrate - \BSdividend_i -  \tfrac{|\sigma_i|^2}{2} \bigr]t 
		+
		\sigma_i 
		\langle 
			\Choleski(\rho) w_t
			,
			e_i
		\rangle
	\Bigr),
\end{split}
\end{equation}
let 
	$\Minfunction \colon \R^\dimProblem \to \R$ and 
	$\Middlefunction \colon \R^\dimProblem \to \R$ 
satisfy for all 
	$x = (x_1, x_2, x_3) \in \R^\dimProblem$
that
$\Minfunction(x) = \min\{ x_1, x_2, x_3 \}$ and
$\Middlefunction(x) = \tfrac{ x_1 + x_2 + x_3 }{3}$,
for every 
	$T \in [0,\infty)$,
	$B \in \R$
let $\BarrierSet_{T, B} \subseteq C([0, \infty), \R^{ \dimProblem })$ satisfy
\begin{equation}
	\label{BSaverageBarrier:eq2}
	\BarrierSet_{T, B}
=
	\bigl\{
		w \in C([0, \infty), \R^{ \dimProblem })
		\colon 
		(
			\exists \, t \in [0,T] 
			\colon 
			\Minfunction(w_t)
			< B
		)  
	\bigr\},
\end{equation}
let 
$
	\phi \colon \Param \times C([0,\infty), \R^\dimProblem) \to \R
$
satisfy for all
	$p = (\BSinitprice, T, \BSrate, \BSdividend, \sigma, \rho, K, B) \in \Param$,
	$w \in C([0,\infty), \R^\dimProblem)$
that
\begin{equation}
\label{BSaverageBarrier:eq2.2}
\begin{split} 
	\phi(p, w)
=
	\mathbbm{1}_{\BarrierSet_{T,B}}(X^{\BSinitprice, \BSrate, \BSdividend, \sigma, \rho}(w))
	\exp(-\BSrate T)
	\max\bigl\{
		K - 
		\Middlefunction(
			X^{\BSinitprice, \BSrate, \BSdividend, \sigma, \rho}_T(w)
		)
		,
		0
	\bigr\},
\end{split}
\end{equation}
and let $u \colon \Param \to \R$ satisfy for all
	$p \in \Param$
that
\begin{equation}
\label{BSaverageBarrier:eq3}
\begin{split} 
	u(p)
=
	\Exp{
		\phi(p, \WRV)
	}.
\end{split}
\end{equation}
In the economic interpretation of the Black-Scholes model for every 
	$p = (\BSinitprice, T, \BSrate, \BSdividend, \sigma, \rho, K, B) \in \Param$
the number $u(p) \in \R$ corresponds to the fair price of a average basekt put option on three underlying assets with 
	initial prices $\BSinitprice$,
	time of maturity $T$,
	risk free rate $\BSrate$,
	dividend yields of the respective underlying assets $\BSdividend$,
	volatilities of the respective underlying assets $\sigma$,
	covariance matrix of the Brownian motions $\covMatrix(\rho)$,
	strike price $K$, and
	knock-in barrier $B$.

We now specify the mathematical objects in the LRV strategy appearing in \cref{algo:general} 
to approximately calculate the target function $u \colon \Param \to \R$ in \cref{BSaverageBarrier:eq3}.
Specifically, in addition to the assumptions above, 
let 
	$\numTimePoints = 10$,
	$\NumRefMC = \numRefAlgSamplesBSaverageBarrier$,
	$\NumMC \in \N$,
assume
	$\ApproxAlgdim =  \NumMC \numTimePoints \dimProblem$,
	$\ApproxRefdim = \NumRefMC \numTimePoints \dimProblem$,
	$\dimSolution = 1$,
	$\Batchsize = 1024$,
for every 
	$p = (\BSinitprice, T, \BSrate, \BSdividend, \sigma, \rho, K, B) \in \Param$,
	$w = (w_1, \ldots, w_\numTimePoints) \in \R^{\numTimePoints \dimProblem}$
let 
$
	\mathcal{X}^{p, w} 
= 
	(\mathcal{X}^{p, w, 1}, \mathcal{X}^{p, w, 2}, \mathcal{X}^{p, w, 3} ) 
	\colon 
	\{0, 1, \ldots, \numTimePoints\} \to \R^\dimProblem
$
satisfy for all 
	$n \in \{1, 2, \ldots, \numTimePoints \}$,
	$i \in \{1, 2, 3\}$
that
$
	\mathcal{X}^{p, w}_0 = \BSinitprice	
$
and
\begin{equation}
\label{BSaverageBarrier:eq4}
\begin{split} 
	\mathcal{X}^{p, w, i}_n
&= 
	\mathcal{X}^{p, w, i}_{n-1}
	\exp\Bigl(
		\tfrac{T}{N}
		\bigl[\BSrate - \BSdividend_i -  \tfrac{|\sigma_i|^2}{2}\bigr]
		+
		\big[\tfrac{T}{N}\big]^{1/2}
		\sigma_i 
		\langle 
			\Choleski(\rho) w_n
			,
			e_i
		\rangle
	\Bigr),
\end{split}
\end{equation}
for every 
	$p = (\BSinitprice, T, \BSrate, \BSdividend, \sigma_1, \sigma_2, \sigma_3, \rho, K, B) \in \Param$ 
let 
	$ \Trigger_p = (\Trigger_{p,1}, \Trigger_{p,2}, \Trigger_{p,3}) \colon (0,\infty)^\dimProblem \times (0,\infty)^\dimProblem \to \R^\dimProblem$,
	$ \UppercrossProb_p \colon (0,\infty)^\dimProblem \times (0,\infty)^\dimProblem \to [0,1]$,
	$ \LowercrossProb_p \colon (0,\infty)^\dimProblem \times (0,\infty)^\dimProblem \to [0,1]$, and
	$ \crossProb_p \colon ((0,\infty)^\dimProblem)^{N+1} \to [0,1]$	
satisfy for all	
	$x = (x_1, x_2, x_3)$, 
	$y  = (y_1, y_2, y_3) \in (0,\infty)^\dimProblem$,
	$\mathbf{x}_0, \mathbf{x}_1, \ldots, \mathbf{x}_N \in (0,\infty)^\dimProblem$,
	$i \in \{1, 2, 3\}$
that
\begin{equation}
\label{BSaverageBarrier:eq5.1}
\begin{split} 
	\Trigger_{p,i}(x, y) 
= 
	\begin{cases}
		1 &\colon \min\{ x, y \} < B \\
		\exp\!\left(
			-\frac{
				2 \ln(x_i/B)\ln(y_i/B)
			}{
				(\sigma_i)^2 T/N
			}
		\right)
		&\colon \min\{ x, y \} \geq B,
	\end{cases}
\end{split}
\end{equation}
\begin{equation}
\label{BSaverageBarrier:eq5.2}
\begin{split} 
	\UppercrossProb_{p}(x, y) 
= 
	1 - \max\bigl( \cup_{j \in \{1, 2, 3\}} \{\Trigger_{p,j}(x, y) \} \bigr), \qquad
	\LowercrossProb_{p}(x, y) 
= 
	\max\bigl\{ 1 - \textstyle \sum_{j = 1}^3 \Trigger_{p,j}(x, y), 0  \bigr\},
\end{split}
\end{equation}
\begin{equation}
\label{BSaverageBarrier:eq5.3}
\begin{split} \textstyle
\andq
	\crossProb_{p}(\mathbf{x}_0, \mathbf{x}_1, \ldots, \mathbf{x}_N) 
= 
	\frac{1}{2}
	\left(
		2 
		- 
		\left[ 
			\prod\limits_{n = 1}^N
				\UppercrossProb_{p}(\mathbf{x}_{n-1}, \mathbf{x}_{n}) 
		\right]
		-
		\left[ 
			\prod\limits_{n = 1}^N
				\LowercrossProb_{p}(\mathbf{x}_{n-1}, \mathbf{x}_{n}) 
		\right]
	\right),
\end{split}
\end{equation}
let 
$
	\eulerapprox \colon
	\Param \times \R^{\numTimePoints \dimBM} \to
	\R
$
satisfy for all 
	$ p = (\BSinitprice, T, \BSrate, \BSdividend, \sigma, \rho, K, B) \in \Param $,
	$ w \in \R^{\numTimePoints \dimProblem}$
that
\begin{equation}
\label{BSaverageBarrier:eq6}
\begin{split} 
	\eulerapprox(p, w)
=
	\crossProb_p(\mathcal{X}^{p, w}_{0}, \mathcal{X}^{p, w}_1, \ldots, \mathcal{X}^{p, w}_N)
	\exp(-\BSrate T)
	\max\bigl\{
		K - 
		\Middlefunction(
			\mathcal{X}^{p, w}_N
		)
		,
		0
	\bigr\},
\end{split}
\end{equation}
assume for all 
	$ p = (\BSinitprice, T, \BSrate, \BSdividend, \sigma, \rho, K, B) \in \Param $,
	$ w = (w_1,\ldots, w_{\NumRefMC \numTimePoints \dimProblem}) \in \R^{\NumRefMC \numTimePoints \dimProblem}$,
	$ \theta = (\theta_1,\ldots, \theta_{\NumMC \numTimePoints \dimProblem}) \in \R^{\NumMC \numTimePoints \dimProblem}$
that
\begin{equation}
\label{BSaverageBarrier:eq7.0}
\begin{split} 
	\ApproxRef(p, w)
=
	\frac{ 1 }{ \NumRefMC }
	\left[ 
		{\textstyle \sum\limits_{ \MCvariable = 1 }^{ \NumRefMC } }
			\eulerapprox\big( 
				p, 
				( 
				w_{ ( \MCvariable - 1 ) \numTimePoints\dimProblem + k }
				)_{k \in \{1, 2, \ldots, \numTimePoints\dimProblem \}}         
			\big)
	\right]
\end{split}
\end{equation}
\begin{equation}
\label{BSaverageBarrier:eq7}
\begin{split} 
\qandq
	\ApproxAlg(p, \theta)
=
	\frac{ 1 }{ \NumMC }
	\left[ 
		{\textstyle \sum\limits_{ \MCvariable = 1 }^{ \NumMC }}
			\eulerapprox\big( 
				p, 
				( 
				\theta_{ ( \MCvariable - 1 ) \numTimePoints\dimProblem + k }
				)_{k \in \{1, 2, \ldots, \numTimePoints\dimProblem \}}         
			\big)
	\right],
\end{split}
\end{equation}
assume for all
	$x, y \in \R$
that
	$\costfct(x, y) = | x-y |^2$,
let
	$\mathbf{a} = {\frac{9}{10}}$,
	$\mathbf{b}  = {\frac{999}{1000}}$,
	$ \varepsilon \in (0,\infty)$,
let
	$(\gamma_m)_{m \in \N} \subseteq (0,\infty)$
satisfy for all 
	$m \in \{1, 2, \ldots, \numTrainstepsBSaverageBarrier \}$
that
$
	\gamma_m 
=  
	{\mathbbm{1}_{(0,20000]}(m)}{10^{-3}}
	+
	{\mathbbm{1}_{(20000,30000]}(m)}{10^{-4}}
	+
	{\mathbbm{1}_{(30000,40000]}(m)}{10^{-5}}
$,
assume for all
	$m \in \N$,
	$i \in \{1, 2,\ldots,\ApproxAlgdim \}$,
	$g_1 = (g_1^{(1)},\ldots,g_1^{(\ApproxAlgdim)})$, 	$g_2 = (g_2^{(1)},\ldots,g_2^{(\ApproxAlgdim)})$,
	$\dots$, 
	$g_m = (g_m^{(1)},\ldots,g_m^{(\ApproxAlgdim)})
	\in 
		\R^{\ApproxAlgdim}
	$
that
\begin{equation}
\label{BSaverageBarrier:eq7.2}
\begin{split} 
	\psi_m^{(i)}(g_1, g_2, \ldots, g_m)
=
	\gamma_m
	\left[
		\frac{
			\sum_{k = 1}^m \mathbf{a}^{m-k}(1-\mathbf{a}) g_k^{(i)}
		}{
			1-\mathbf{a}^m
		}
	\right]
	\left[
		\varepsilon +\bigg[
			\frac{
				\sum_{k = 1}^m \mathbf{b}^{m-k}(1-\mathbf{b}) |g_k^{(i)}|^2
			}{
				1- \mathbf{b}^m
			}
		\bigg]^{\nicefrac{1}{2}} 
	\right]^{-1},
\end{split}
\end{equation}	
assume that  
$ \ApproxAlgRV$ is a standard normal random vector,
assume that  
$  
  \ParamRV_{ 
    1, 1 
  }
$ is $\mathcal{U}_{\Param}$-distributed,
and
assume that 
$ 
  W^{ 1, 1 }
$
is a standard normal random vector.

Let us add some comments regarding the setup introduced above.
The functions 
	$ \crossProb_p \colon ((0,\infty)^\dimProblem)^{N+1} \to [0,1]$,
	$p \in \Param$,
in \cref{BSaverageBarrier:eq4} are employed to estimate crossing probabilities of Brownian bridges as proposed in Shevchenko \cite{Shevchenko2003} (see also, e.g., Gobet \cite{Gobet2009}).
Specifically, note that Gobet \cite[Displays (12), (13)]{Gobet2009} suggests that for all
	$p = (\BSinitprice, T, \BSrate, \BSdividend, \sigma, \rho, K, B) \in \Param$
we have $\P$-a.s.\ that
\begin{equation}
\label{T_B_D}
\begin{split} 
	&\crossProb_p(
		X^{\BSinitprice, \BSrate, \BSdividend, \sigma, \rho}_0(\WRV), 
		X^{\BSinitprice, \BSrate, \BSdividend, \sigma, \rho}_{T/N}(\WRV), 
		\ldots, 
		X^{\BSinitprice, \BSrate, \BSdividend, \sigma, \rho}_T(\WRV)
	) \\
&\approx
	\P\bigl(
			\exists \, t \in [0, T] 
			\colon 
			\Minfunction(
				X^{\BSinitprice, \BSrate, \BSdividend, \sigma, \rho}_t(\WRV)
			)	
			<
			B
		\,\big|\,
			(
				\WRV_0, 
				\WRV_{T/N},
				\ldots, 
				\WRV_T 
			)
	\bigr).
\end{split}
\end{equation}
Combining this with 
	\cref{BSaverageBarrier:eq1},
	\cref{BSaverageBarrier:eq4},
	\cref{BSaverageBarrier:eq6}, 
	the tower property for conditional expectations, and
	the fact that Brownian motions have independent increments
suggests for all
	$p = (\BSinitprice, T, \BSrate, \BSdividend, \sigma, \rho, K, B) \in \Param$
that
\begin{equation}
\label{T_B_D}
\begin{split} 
	&\Exp{\eulerapprox(p, \eulerRV)}
=
	\Exp{
		\eulerapprox\big( 
			p, \sqrt{{\numTimePoints}/{T}}\,
			(\WRV_{T / \numTimePoints} - \WRV_0, \WRV_{ 2 T / \numTimePoints } - \WRV_{ T / \numTimePoints }, \dots, \WRV_T -\WRV_{ (\numTimePoints-1) T / \numTimePoints } )
		\big)
	} \\
&=
	\E \Bigg[
		\crossProb_p(
			X^{\BSinitprice, \BSrate, \BSdividend, \sigma, \rho}_0(\WRV), 
			X^{\BSinitprice, \BSrate, \BSdividend, \sigma, \rho}_{T/N}(\WRV), 
			\ldots, 
			X^{\BSinitprice, \BSrate, \BSdividend, \sigma, \rho}_T(\WRV)
		)\\
&\quad
		\exp(-\BSrate T)
		\max\bigl\{
			K - 
			\Middlefunction(
				X^{\BSinitprice, \BSrate, \BSdividend, \sigma, \rho}_{T}(\WRV)
			)
			,
			0
		\bigr\}
	\Bigg]\\
&\approx
	\E \Bigg[
		\P\bigl(
				\exists \, t \in [0, T] 
				\colon 
				\Minfunction(
					X^{\BSinitprice, \BSrate, \BSdividend, \sigma, \rho}_t(\WRV) 
				)	
				<
				B
			\,\big|\,
				(
					\WRV_0, 
					\WRV_{T/N},
					\ldots, 
					\WRV_T 
				)
		\bigr)\\
&\quad
		\exp(-\BSrate T)
		\max\bigl\{
			K - 
			\Middlefunction(
				X^{\BSinitprice, \BSrate, \BSdividend, \sigma, \rho}_{T}(\WRV)
			)
			,
			0
		\bigr\}
	\Bigg]\\
&=
	\E \Bigg[
		\P\bigl(
			X^{\BSinitprice, \BSrate, \BSdividend, \sigma, \rho}(\WRV) \in \BarrierSet_{T, B}
			\,\big|\,
				\WRV
		\bigr)
		\exp(-\BSrate T)
		\max\bigl\{
			K - 
			\Middlefunction(
				X^{\BSinitprice, \BSrate, \BSdividend, \sigma, \rho}_{T}(\WRV)
			)
			,
			0
		\bigr\}
	\Bigg]\\
&=
	\E \Bigg[
		\mathbbm{1}_{\BarrierSet_{T, B}}(\WRV)
		\exp(-\BSrate T)
		\max\bigl\{
			K - 
			\Middlefunction(
				X^{\BSinitprice, \BSrate, \BSdividend, \sigma, \rho}_{T}(\WRV)
			)
			,
			0
		\bigr\}
	\Bigg]
=
	\Exp{
		\phi(p, \WRV)
	}
=
	u(p).
\end{split}
\end{equation}
The proposal algorithm in \cref{BSaverageBarrier:eq3} thus corresponds to the MC method based on approximated MC samples.
Furthermore, observe that \cref{BSaverageBarrier:eq7.2} describes the Adam optimizer in the setup of \cref{algo:general} (cf.\ Kingma \& Ba \cite{Kingma2014} and \cref{sect:Adam}).

In \cref{table:BSaverageBarrier_LRV} we approximately present for
	$\NumMC \in \{2^5, 2^6, \ldots, 2^{13}\}$
one random realization of the $L^1(\lambda_{\Param};\R)$-approximation error
\begin{equation}
\label{BSaverageBarrier_LRV:eq1}
\begin{split} 
\textstyle
	\int_\Param 
		|u(p) - \ApproxAlg(p, \Theta_{\numTrainstepsBSaverageBarrier})|
	\d p
\end{split}
\end{equation}
(\nth{3} column in \cref{table:BSaverageBarrier_LRV}),
one random realization of the $L^2(\lambda_{\Param};\R)$-approximation error
\begin{equation}
\label{BSaverageBarrier_LRV:eq2}
\begin{split} 
\textstyle
	\bigl[
		\int_\Param 
			|u(p) - \ApproxAlg(p, \Theta_{\numTrainstepsBSaverageBarrier})|^2
		\d p
	\bigr]^{\nicefrac{1}{2}}
\end{split}
\end{equation}
(\nth{4} column in \cref{table:BSaverageBarrier_LRV}),
one random realization of the $L^\infty(\lambda_{\Param};\R)$-approximation error
\begin{equation}
\label{BSaverageBarrier_LRV:eq3}
\begin{split} 
\textstyle
	\sup_{p \in \Param}
		|u(p) - \ApproxAlg(p, \Theta_{\numTrainstepsBSaverageBarrier})|,
\end{split}
\end{equation}
(\nth{5} column in \cref{table:BSaverageBarrier_LRV}), and
the time to compute $\Theta_{\numTrainstepsBSaverageBarrier}$
(\nth{6} column in \cref{table:BSaverageBarrier_LRV}).
For every 	
	$\NumMC \in \{2^5, 2^6, \ldots, 2^{13}\}$
we approximated the integrals in \eqref{BSaverageBarrier_LRV:eq1} and \eqref{BSaverageBarrier_LRV:eq2} with the MC method based on $\numMCsamplesBSaverageBarrier$ samples
and we approximated the supremum in \eqref{BSaverageBarrier_LRV:eq3} based on $\numMCsamplesBSaverageBarrier$ random samples 
(cf., e.g., Beck et al.\ \cite[Lemma 3.5]{BeckJafaari21} and Beck et al.\ \cite[Section 3.3]{Beck2019published}).
In our approximations of \eqref{BSaverageBarrier_LRV:eq1}, \eqref{BSaverageBarrier_LRV:eq2}, and \eqref{BSaverageBarrier_LRV:eq3} we have approximately computed for all required sample points 
	$p \in \Param$
the value $u(p)$ of the unknown exact solution 
by means of an MC approximation with $\numRefSamplesBSaverageBarrier$ MC samples.

Besides the LRV strategy we also employed the standard MC method to approximate the function $u \colon \Param \to \R$ in \eqref{BSaverageBarrier:eq3}. 
In \cref{table:BSaverageBarrier_MC} we present the corresponding numerical simulation results.
In \cref{table:BSaverageBarrier_MC}
	we have approximated the $L^1(\lambda_{\Param};\R)$-approximation error
		of the MC method with the MC method based on $\numMCsamplesBSaverageBarrier$ samples,
	we have approximated the $L^2(\lambda_{\Param};\R)$-approximation errors
		of the MC method with the MC method based on $\numMCsamplesBSaverageBarrier$ samples, and 
	we have approximated the $L^\infty(\lambda_{\Param};\R)$-approximation errors
		of the MC method based on $\numMCsamplesBSaverageBarrier$ random samples
		(cf., e.g., Beck et al.\ \cite[Lemma 3.5]{BeckJafaari21} and Beck et al.\ \cite[Section 3.3]{Beck2019published}).
In our approximations of the above mentioned approximation errors we have approximately computed for all required sample points 
	$p \in \Param$
the value $u(p)$ of the unknown exact solution by means of an MC approximation with $ \numRefSamplesBSaverageBarrier$ MC samples.

\begin{table}[H] \tiny
\resizebox{\textwidth}{!}{
\csvreader[tabular=|c|c|c|c|c|c|,
     table head=
     \hline $\NumMC$ & \thead{Number \\of \\trainable \\parameters}&\thead{$L^1$-approx. \\ error}& \thead{$L^2$-approx. \\error}&\thead{ $L^\infty$-approx. \\ error} 
     & \thead{Training \\ time \\ in seconds} 
     \\\hline,
    late after line=\\\hline]{Table_9.tex}
{
	num_lrv_samples=\nsamp, 
	num_params =\nparams,
	l1_error=\lll, 
	l_2_error=\llll, 
	l_inf_error=\linf, 
	train_time=\train,
	time=\eval  ,
	dtype = \dtype
}
{ \nsamp& \nparams &  \lll & \llll & \linf &\train }
}
\caption{\label{table:BSaverageBarrier_LRV}Numerical simulations for the LRV strategy in case of the Black-Scholes model for European average put options with knock-in barriers described in \cref{simul:BSaverageBarrier} (16-dimensional approximation problem)}
\end{table}

\begin{table}[H] \tiny
\resizebox{\textwidth}{!}{
\csvreader[tabular=|c|c|c|c|c|c|c|c|c|c|,
     table head=
     \hline 
     \thead{Number \\ of \\MC \\samples}
     &\thead{Number of \\ scalar random \\ variables \\per evaluation} 
     &\thead{$L^1$-approx. \\ error} 
     & \thead{$L^2$-approx. \\error}
     &\thead{ $L^\infty$-approx. \\ error} 
     \\\hline,
    late after line=\\\hline]{Table_10.tex}
{
	num_samples=\nsamp, 
	num_RVs = \ninputs,
	l1_error=\lll, 
	l_2_error=\llll, 
	l_inf_error=\linf, 
	time=\train, 
	dtype = \dtype
}
{
	\nsamp
	& \ninputs 
	& \lll 
	& \llll 
	& \linf 
}}
\caption{\label{table:BSaverageBarrier_MC}Numerical simulations for the standard MC method in case of the Black-Scholes model for European average put options with knock-in barriers described in \cref{simul:BSaverageBarrier} (16-dimensional approximation problem)}
\end{table}

\subsection{Parametric stochastic Lorentz equations}
\label{simul:Lorentz}

\newcommand{\nrtrainstepsLorentz}{10000}
\newcommand{\numMCsamplesLorentz}{12800} \newcommand{\numRefAlgSamplesLorentz}{{512}}
\newcommand{\NumRefMCLorentz}{{8\,196\,000}}

In this section we apply the LRV strategy to the parametric stochastic Lorentz equation (cf., e.g, Schmallfuss \cite{Schmallfuss1997} and Hutzenthaler \& Jentzen \cite[Section 4.4]{Hutzenthaler12}).
A brief summary of the numerical results of this subsection can be found in \cref{table:Lorentz_summary} below.
We first introduce the parametric stochastic Lorentz equation in the context of \cref{algo:general}.

\begin{table}[H] \tiny
\resizebox{\textwidth}{!}{
\csvreader[tabular=|c|c|c|c|c|c|c|c|,
	separator=semicolon,
     table head=
     \hline Approximation method& \thead{Number of \\ trainable \\ parameters} & \thead{Number\\ of \\ MC \\ samples} & \thead{$L^2$-approx. \\ error}& \thead{$L^\infty$-approx. \\ error}& \thead{Training \\ time in \\ seconds} 
     \\\hline,
    late after line=\\\hline]{Table_11.tex}
{
	method = \method, 
	num-samples = \nsamp, 
	num-params = \nparams,
	l1-error=\lll, 
	l-2-error=\llll, 
	l-inf-error=\linf, 
	train-time=\train, 
	eval-time=\eval
}
{ \method & \nparams &\nsamp& \llll & \linf &\train 
}
}
\caption{\label{table:Lorentz_summary}Parametric stochastic Lorentz equation}
\end{table}

Assume \cref{algo:general},
let $\dimProblem = 3$, 
let $g \colon \R^\dimProblem \to \R$ satisfy for all
	$x \in \R^\dimProblem$
that
	$g(x) = \|x\|^2$,
for every let 
	$\alpha = (\alpha_1, \alpha_2, \alpha_3) \in \R^\dimProblem$
let $\mu_\alpha \colon \R^\dimProblem \to \R^\dimProblem$ satisfy for all
	$x = (x_1, x_2, x_3) \in \R^\dimProblem$
that
\begin{equation}
\begin{split} 
	\mu_\alpha(x) 
=
	(\alpha_1(x_2 - x_1), \alpha_2 x_1 - x_2 - x_1x_3, x_1x_2 - \alpha_3 x_3),
\end{split}
\end{equation}
and
for every 
	$\alpha = (\alpha_1, \alpha_2, \alpha_3), \beta = (\beta_1, \beta_2, \beta_3) \in \R^\dimProblem$
let $u_{\alpha, \beta}=(u_{\alpha, \beta}(t,x))_{(t,x)\in [0,\infty)\times\R^d}\in C^{1,2}([0,\infty)\times\R^d,\R)$
be an at most polynomially growing function which satisfies
for all 
	$t\in [0,\infty)$, 
	$x\in\R^d$ 
that 
\begin{multline}
\label{Lorentz:eq1}
(\tfrac{\partial u_{\alpha, \beta}}{\partial t})(t,x)
= 
\alpha_1(x_2-x_1) (\tfrac{\partial u_{\alpha, \beta}}{\partial x_1})(t,x)
+ 
(\alpha_2 x_1 - x_2 - x_1 x_3)(\tfrac{\partial u_{\alpha, \beta}}{\partial x_2})(t,x) \\
+ 
(x_1 x_2 - \alpha_3 x_3)(\tfrac{\partial u_{\alpha, \beta}}{\partial x_3})(t,x)
+ 
\sum_{i = 1}^\dimProblem \tfrac{(\beta_i)^2}{2}(\tfrac{\partial^2 u_{\alpha, \beta}}{\partial (x_i)^2})(t,x)
\end{multline}
and $u_{\alpha, \beta}(0,x) = g(x)$.
 
We now specify the mathematical objects in the LRV strategy appearing in \cref{algo:general} 
to approximately calculate $(u_{\alpha,\beta})_{(\alpha,\beta) \in \R^\dimProblem \times \R^\dimProblem}$.
Specifically, in addition to the assumptions above,
let $\odot \colon \R^\dimProblem \times \R^\dimProblem \to \R^\dimProblem$ satisfy for all
	$x = (x_1, x_2, x_3)$,
	$y = (y_1, y_2, y_3)$
that
$	
	x \odot y = (x_1 y_1, x_2 y_2, x_3 y_3)
$,
let 
	$\numTimePoints = {25}$,
	$\NumRefMC = \numRefAlgSamplesLorentz$,
	$\NumMC \in \N$,
	$\antithetic \in \{0,1\}$,
assume
	$\dimParam = 10$,
	$\ApproxAlgdim =  \NumMC \numTimePoints \dimProblem$,
	$\ApproxRefdim = \NumRefMC \numTimePoints \dimProblem$,
	$\dimSolution = 1$,
	$\Batchsize = {512}$, and
	\begin{equation}
	\begin{split} 
		\Param = [0.01,1] \times ([9, 11] \times [13, 15] \times [{1, 2}]) \times [0.05, 0.25]^3 \linebreak \times ([0.5, 2.5] \times [8, 10] \times [10, 12]),
	\end{split}
	\end{equation}
for every 
	$p = (T, \alpha, \beta, x) \in \Param$,
	$w = (w_1, \ldots, w_\numTimePoints) \in \R^{\numTimePoints \dimProblem}$
let 
	$\mathcal{X}^{p, w} \colon \{0, 1, \ldots, \numTimePoints\} \to \R^\dimProblem  $
satisfy for all 
	$n \in \{1, 2, \ldots, \numTimePoints \}$
that
$
	\mathcal{X}^{p, w}_0 = x	
$
and
\begin{equation}
\label{Lorentz:eq2}
\begin{split} 
	\mathcal{X}^{p, w}_n
&= 
	\mathcal{X}^{p, w}_{n-1} 
	+
	\sqrt{T/\numTimePoints}  (\beta \odot  w_n )
	+
	\tfrac{T}{2\numTimePoints} 
	\big(
		\mu_\alpha(\mathcal{X}^{p, w}_{n-1}) 
		+
		\mu_\alpha\big(
			\mathcal{X}^{p, w}_{n-1}
			+
			\tfrac{T}{\numTimePoints}
			\mu_\alpha(\mathcal{X}^{p, w}_{n-1}) 
			+
			\sqrt{T/\numTimePoints}  (\beta \odot w_n)
		\big) 
	\big),
\end{split}
\end{equation}
let 
$
	\eulerapprox_k \colon
	\Param \times \R^{\numTimePoints \dimBM} \to
	\R
$,
	$k \in \{0, 1\}$,
satisfy for all 
	$ p \in \Param $,
	$ w \in \R^{\numTimePoints \dimProblem}$
that
\begin{equation}
\begin{split} 
	\eulerapprox_0( p, w)
=
	g(\mathcal{X}^{p, w}_\numTimePoints)
\qandq
	\eulerapprox_1( p, w)
=
	\tfrac{1}{2}\big[g(\mathcal{X}^{p, w}_\numTimePoints) + g(\mathcal{X}^{p, -w}_\numTimePoints)\big],
\end{split}
\end{equation}
assume for all 
	$ p  \in \Param $,
	$ w = (w_1, \ldots, w_{\NumRefMC \numTimePoints \dimProblem}) \in \R^{\NumRefMC \numTimePoints \dimProblem}$,
	$ \theta = (\theta_1,\ldots, \theta_{\NumMC \numTimePoints \dimProblem}) \in \R^{\NumMC \numTimePoints \dimProblem}$
that
\begin{equation}
\label{Lorentz:eq3.0}
\begin{split} 
	\ApproxRef(p, w)
=
	\frac{ 1 }{ \NumRefMC }
	\left[ 
		{\textstyle \sum\limits_{ \MCvariable = 1 }^{ \NumRefMC }}
			\eulerapprox_a\big( 
				p, 
				( 
				w_{ ( \MCvariable - 1 ) \numTimePoints\dimProblem + k }
				)_{k \in \{1, 2, \ldots, \numTimePoints\dimProblem \}}         
			\big)
	\right]
\end{split}
\end{equation}
\begin{equation}
\label{Lorentz:eq3}
\begin{split} 
\andq
	\ApproxAlg(p, \theta)
=
	\frac{ 1 }{ \NumMC }
	\left[ 
		{\textstyle \sum\limits_{ \MCvariable = 1 }^{ \NumMC } }
			\eulerapprox_a\big( 
				p, 
				( 
				\theta_{ ( \MCvariable - 1 ) \numTimePoints\dimProblem + k }
				)_{k \in \{1, 2, \ldots, \numTimePoints\dimProblem \}}         
			\big)
	\right],
\end{split}
\end{equation}
assume for all
	$x, y \in \R$
that
	$\costfct(x, y) = | x-y |^2$,
let
	$\mathbf{a} = {\frac{9}{10}}$,
	$\mathbf{b}  = {\frac{999}{1000}}$,
	$ \varepsilon \in (0,\infty)$,
let
	$(\gamma_m)_{m \in \N} \subseteq (0,\infty)$
satisfy for all 
	$m \in \{1, 2, \ldots, \nrtrainstepsLorentz \}$
that
$
	\gamma_m 
=  
	{\mathbbm{1}_{(0,5000]}(m)}{10^{-3}}
	+
	{\mathbbm{1}_{(5000,8000]}(m)}{10^{-4}}
	+
	{\mathbbm{1}_{(8000,10000]}(m)}{10^{-5}}
$,
assume for all
	$m \in \N$,
	$i \in \{1, 2,\ldots,\ApproxAlgdim \}$,
	$g_1 = (g_1^{(1)},\ldots,g_1^{(\ApproxAlgdim)})$, 	$g_2 = (g_2^{(1)},\ldots,g_2^{(\ApproxAlgdim)})$,
	$\dots$, 
	$g_m = (g_m^{(1)},\ldots,g_m^{(\ApproxAlgdim)})
	\in 
		\R^{\ApproxAlgdim}
	$
that
\begin{equation}
\label{Lorentz:eq4}
\begin{split} 
	\psi_m^{(i)}(g_1, g_2, \ldots, g_m)
=
	\gamma_m
	\left[
		\frac{
			\sum_{k = 1}^m \mathbf{a}^{m-k}(1-\mathbf{a}) g_k^{(i)}
		}{
			1-\mathbf{a}^m
		}
	\right]
	\left[
		\varepsilon +\bigg[
			\frac{
				\sum_{k = 1}^m \mathbf{b}^{m-k}(1-\mathbf{b}) |g_k^{(i)}|^2
			}{
				1- \mathbf{b}^m
			}
		\bigg]^{\nicefrac{1}{2}} 
	\right]^{-1},
\end{split}
\end{equation}	
assume that  
$ \ApproxAlgRV$ is a standard normal random vector,
assume that  
$  
  \ParamRV_{ 
    1, 1 
  }
$ is $\mathcal{U}_{\Param}$-distributed,
and
assume that 
$ 
  W^{ 1, 1 }
$
is a standard normal random vector.

Let us add some comments regarding the setup introduced above.
Observe that \eqref{Lorentz:eq2} corresponds to a Heun discretization (cf., e.g., Kloeden \cite[(1.4) in Section 15]{KloedenPlaten13}) of the SDE associated to the Kolmogorov PDE in \eqref{Lorentz:eq1}.
In the case $a = 0$ the proposal algorithm in \eqref{Lorentz:eq3} thus corresponds to the MC-Heun method and 
in the case $a = 1$ the proposal algorithm in \eqref{Lorentz:eq3} thus corresponds to the antithetic MC-Heun method.
In particular, we remark that for all
	$p = (T, \alpha, \beta, x) \in \Param$
we have that
\begin{equation}
\begin{split} 
	  \frac{ 1 }{ \NumMC }
	  \left[
	    \sum_{ \MCvariable = 1 }^{ \NumMC }
	    \eulerapprox_0\big( 
	    p, 
	    \WRVs^{0,\MCvariable}
	    \big)
	  \right]  
\approx
	u_{\alpha, \beta}(T,x) 
\approx
	  \frac{ 1 }{ \NumMC }
	  \left[
	    \sum_{ \MCvariable = 1 }^{ \NumMC }
	    \eulerapprox_1\big( 
	    p, 
	    \WRVs^{0,\MCvariable}
	    \big)
	  \right].
\end{split}
\end{equation}
Furthermore, observe that \cref{Lorentz:eq4} describes the Adam optimizer in the setup of \cref{algo:general} (cf.\ Kingma \& Ba \cite{Kingma2014} and \cref{sect:Adam}).

In \cref{table:Lorentz_LRV} we approximately present for
	$\NumMC \in \{2^5, 2^6, \ldots, 2^{13}\}$,
	$a \in \{0,1\}$
one random realization of the $L^1(\lambda_{\Param};\R)$-approximation error
\begin{equation}
\label{Lorentz_lrv:eq1}
\begin{split} 
\textstyle
	\int_\Param 
		|u_{\alpha, \beta}(T, x) - \ApproxAlg((T, \alpha, \beta, x), \Theta_{\nrtrainstepsLorentz})|
	\d \, (T, \alpha, \beta, x)
\end{split}
\end{equation}
(\nth{3} column in \cref{table:Lorentz_LRV}),
one random realization of the $L^2(\lambda_{\Param};\R)$-approximation error
\begin{equation}
\label{Lorentz_lrv:eq2}
\begin{split} 
\textstyle
	\bigl[
		\int_\Param 
			|u_{\alpha, \beta}(T, x) - \ApproxAlg((T, \alpha, \beta, x), \Theta_{\nrtrainstepsLorentz})|^2
		\d \, (T, \alpha, \beta, x)
	\bigr]^{\nicefrac{1}{2}}
\end{split}
\end{equation}
(\nth{4} column in \cref{table:Lorentz_LRV}),
one random realization of the $L^\infty(\lambda_{\Param};\R)$-approximation error
\begin{equation}
\label{Lorentz_lrv:eq3}
\begin{split} 
	\sup_{(T, \alpha, \beta, x) \in \Param}
		|u_{\alpha, \beta}(T, x) - \ApproxAlg((T, \alpha, \beta, x), \Theta_{\nrtrainstepsLorentz})|
\end{split}
\end{equation}
(\nth{5} column in \cref{table:Lorentz_LRV}), and
the time to compute $\Theta_{\nrtrainstepsLorentz}$
(\nth{6} column in \cref{table:Lorentz_LRV}).
For every 	
	$\NumMC \in \{2^5, 2^6, \ldots, 2^{13}\}$,
	$a \in \{0,1\}$
we approximated the integrals in \eqref{Lorentz_lrv:eq1} and \eqref{Lorentz_lrv:eq2} with the MC method based on $\numMCsamplesLorentz$ samples
and we approximated the supremum in \eqref{Lorentz_lrv:eq3} based on $\numMCsamplesLorentz$ random samples
(cf., e.g., Beck et al.\ \cite[Lemma 3.5]{BeckJafaari21} and Beck et al.\ \cite[Section 3.3]{Beck2019published}).
In our approximations of \eqref{Lorentz_lrv:eq1}, \eqref{Lorentz_lrv:eq2}, and \eqref{Lorentz_lrv:eq3} we have approximately computed for all required sample points 
	$(T, \alpha, \beta, x) \in \Param$
the value $u_{\alpha, \beta}(T, x)$ of the unknown exact solution by means of an antithetic MC approximation with $\NumRefMCLorentz$ MC samples.

To compare the LRV strategy with existing approximation techniques from the literature, we also employ several other methods to approximate the function $\Param \ni (T, \alpha, \beta, x) \mapsto u_{\alpha, \beta}(T, x) \in \R$ in \eqref{Lorentz:eq1}.
Specifically, 
	in \cref{table:Lorentz_ANNs} we present numerical simulations for the deep learning method induced by Becker et al.\ \cite{BeckJafaari21} 
	(with Adam $160000$ training steps, 
	batch size $256$, 
	learning rate schedule 
	$
		\N \ni j 
		\mapsto 
			{\mathbbm{1}_{(0,50000]}(m)}{10^{-1}}
			+
			{\mathbbm{1}_{(50000,100000]}(m)}{10^{-2}}
			+
			{\mathbbm{1}_{(100000,120000]}(m)}{10^{-3}}
			+
			{\mathbbm{1}_{(120000,140000]}(m)}{10^{-4}}
			+
			{\mathbbm{1}_{(140000,160000]}(m)}{10^{-5}}
	$,
	and GELU activation function),
	and
	in \cref{table:Lorentz_MC} we present numerical simulations for the standard and the antithetic MC method.
In Tables~\ref{table:Lorentz_ANNs} and \ref{table:Lorentz_MC} 
	we have approximated the $L^1(\lambda_{\Param};\R)$-approximation errors
		of the respective approximation methods with the MC method based on $\numMCsamplesLorentz$ samples,
	we have approximated the $L^2(\lambda_{\Param};\R)$-approximation errors
		of the respective approximation methods with the MC method based on $\numMCsamplesLorentz$ samples, and 
	we have approximated the $L^\infty(\lambda_{\Param};\R)$-approximation errors
		of the respective approximation methods based on $\numMCsamplesLorentz$ random samples
		(cf., e.g., Beck et al.\ \cite[Lemma 3.5]{BeckJafaari21} and Beck et al.\ \cite[Section 3.3]{Beck2019published}).
In our approximations of the above mentioned approximation errors we have approximately computed for all required sample points 
	$(T, \alpha, \beta, x) \in \Param$
the value $u_{\alpha, \beta}(T, x)$ of the unknown exact solution by means of an antithetic MC approximation with $\NumRefMCLorentz$ MC samples

\begin{table}[H] \tiny
\resizebox{\textwidth}{!}{
\csvreader[tabular=|c|c|c|c|c|c|c|c|,
     table head=
     \hline $\NumMC$ &  $a$ &\thead{Number\\of\\trainable\\parameters}& \thead{$L^1$-approx. \\ error}& \thead{$L^2$-approx. \\error}&\thead{ $L^\infty$-approx. \\ error} & \thead{Training \\ time \\ in \\seconds} 
     \\\hline,
    late after line=\\\hline]{Table_12.tex}
{
	num_lrv_samples=\nsamp, 
	num_params = \nparam,
	antithetic=\anti, 
	l1_error=\lll, 
	l_2_error=\llll, 
	l_inf_error=\linf, 
	rel_l1_error = \rel, 
	rel_l_2_error = \rell,
	rel_l_inf_error = \relinf,
	train_time = \train,
	time=\eval, 
	dtype = \dtype
}
{\nsamp& \anti &\nparam & \lll & \llll & \linf &\train
}
}
\caption{\label{table:Lorentz_LRV}Numerical simulations for the LRV strategy in case of the parametric stochastic Lorentz equation described in \cref{simul:Lorentz} (10-dimensional approximation problem)}
\end{table}

\begin{table}[H] \tiny
\resizebox{\textwidth}{!}{
\csvreader[tabular=|c|c|c|c|c|c|c|c|c|,
     table head=
	\hline \thead{Number \\ of \\ hidden \\ layers}& \thead{Number  \\of neurons \\ on each \\ hidden \\ layer}& \thead{Number \\ of \\ trainable \\ parameters} & \thead{$L^1$-approx. \\ error}& \thead{$L^2$-approx. \\error}&\thead{ $L^\infty$-approx. \\ error} & \thead{Training time \\ in seconds} 
	\\\hline,
	 late after line=\\\hline]{Table_13.tex}
{
	layers = \lyrs,
	inner_dim = \inner,
	num_weights = \weights,
	mc_samples_per_ref_sol = \mcsamp, 
	l1_error=\lll, 
	l_2_error=\llll, 
	l_inf_error=\linf, 
	rel_l1_error = \rel, 
	rel_l_2_error = \rell, 
	rel_l_inf_error = \relinf,
	train_time=\train, 
	time=\eval  ,
	dtype = \dtype
}
{\lyrs & \inner &\weights & \lll & \llll & \linf &\train 
}}
\caption{\label{table:Lorentz_ANNs}Numerical simulations for the deep learning method induced by Becker et al.\ \cite{BeckJafaari21} in case of the parametric stochastic Lorentz equation described in \cref{simul:Lorentz} (10-dimensional approximation problem)}
\end{table}

\begin{table}[H] \tiny
\resizebox{\textwidth}{!}{
\csvreader[tabular=|c|c|c|c|c|c|c|c|c|,
     table head=
     \hline \thead{Number \\ of \\MC \\samples}&\thead{MC Method \\ 0: standard \\ 1: antithetic} &\thead{Number of \\ scalar random \\ variables \\per evaluation}  & \thead{$L^1$-approx. \\ error}& \thead{$L^2$-approx. \\error}&\thead{ $L^\infty$-approx. \\ error} 
     \\\hline,
    late after line=\\\hline]{Table_14.tex}
{
	mc_samples=\nsamp, 
	num_RVs = \ninputs,
	total_nr_eval_solutions = \mcsamp, 
	antithetic=\anti, 
	l1_error=\lll, 
	l_2_error=\llll, 
	l_inf_error=\linf, 
	time=\train, 
	dtype = \dtype
}
{\nsamp&\anti & \ninputs & \lll & \llll & \linf 
}
}
\caption{\label{table:Lorentz_MC}Numerical simulations for the standard and the antithetic MC method in case of the parametric stochastic Lorentz equation described in \cref{simul:Lorentz} (10-dimensional approximation problem)}
\end{table}

\section*{Data availability statement}
The data that support the findings of this study are available from the corresponding author upon reasonable request.

\section*{Acknowledgments}
Nor Jaafari is gratefully acknowledged for his useful assistance regarding some numerical simulations.
The second author gratefully acknowledges the Cluster of Excellence EXC 2044-390685587, Mathematics M\"unster: Dynamics-Geometry-Structure funded by the Deutsche Forschungsgemeinschaft (DFG, German Research Foundation).
This work has been partially funded by the National Science Foundation of China (NSFC) under grant number 12250610192.

\bibliographystyle{acm}

\end{document}